\documentclass[a4paper]{amsart}
\usepackage{amsmath}
\usepackage{amssymb}
\usepackage{amsfonts}
\usepackage{pxfonts}
\usepackage[OT2,T1]{fontenc}

\usepackage{pict2e}

\usepackage[
textwidth=13cm, 
textheight=21cm,
hmarginratio=1:1,
vmarginratio=1:1]{geometry}

\linespread{1.05}

% THEOREM Environments (Examples)-------------------------------------
\newtheorem{thm}{Theorem}[subsection]
\newtheorem{cor}[thm]{Corollary}
\newtheorem{lem}[thm]{Lemma}
\newtheorem{prop}[thm]{Proposition}
\theoremstyle{definition}
\newtheorem{defn}[thm]{Definition}
\newtheorem{conj}[thm]{Conjecture}

\theoremstyle{remark}
\newtheorem{rem}[thm]{Remark}

\numberwithin{equation}{subsection}
\numberwithin{figure}{subsection}

% NEW COMMANDS--------------------------------------------------------

\newcommand{\diff}{\mathrm{d}}
\newcommand{\C}{{\mathbb C}}
\newcommand{\R}{{\mathbb R}}
\newcommand{\D}{{\mathbb D}}
\newcommand{\expect}{{\mathbb E}}
\newcommand{\Te}{{\mathbb T}}
\newcommand{\Z}{{\mathbb Z}}

\newcommand{\imag}{\mathrm{i}}
\newcommand{\e}{\mathrm{e}}
\newcommand{\hDelta}{\varDelta}

\newcommand{\pv}{\mathrm{pv}}

\newcommand{\Pop}{{\mathbf P}}

\newcommand{\Mop}{{\mathbf M}}

\newcommand{\Ordo}{\mathrm{O}}

\newcommand{\classS}{\mathcal{S}}
\newcommand{\cauchy}{\mathbf{C}}
\newcommand{\Sop}{\mathbf{S}}

\newcommand{\hfun}{h}

\DeclareMathOperator{\re}{Re}

\begin{document}

%---------------------------------------------------------------------
%Insert here the title, affiliations and abstract:
%
\title{Bloch functions and asymptotic tail variance}

%\author{Alexander Borichev}

%\address{Borichev: Laboratoire d'analyse, topologie, probabilit\'es\\
%CMI, Aix-Marseille Universit\'e\\
%39, rue Fr\'ed\'eric Joliot Curie\\
%F--13453 Marseille CEDEX 13\\
%FRANCE}
%
%\email{borichev@cmi.univ-mrs.fr}

\author{Haakan Hedenmalm}
\address{
Hedenmalm: Department of Mathematics\\
KTH Royal Institute of Technology\\
S--10044 Stockholm\\
SWEDEN
\\
second affiliation: Chebyshev Laboratory\\
Vasilievsky Island, St-Petersburg 199178\\
RUSSIA}

\email{haakanh@math.kth.se}

\subjclass[2000]{Primary 30H30, 32A25 Secondary 30C62, 47B38, 37F30, 32A40}
\keywords{Asymptotic variance, asymptotic tail variance, Bloch function, 
Bergman projection, quasiconformal, holomorphic motion}
 
\thanks{This research was 
%results of the author in Sections 1-8 were supported by 
%Vetenskapsr\aa{}det (VR) dnr 2012-3122, while the results in 
%Sections 9-10 were
supported by RNF grant 14-41-00010}

\begin{abstract} 
Let $\Pop$ denote the Bergman projection on the unit disk $\D$,
\[
\Pop \mu(z):=\int_\D\frac{\mu(w)}{(1-z\bar w)^2}\,\diff A(w),\qquad z\in\D,
\]
where $\diff A$ is normalized area measure. We prove that if $|\mu(z)|\le1$
on $\D$, then the integral 
\[
I_\mu(a,r):=\int_{0}^{2\pi}\exp\Bigg\{a\frac{r^4|\Pop\mu(r\e^{\imag\theta})|^2}
{\log\frac{1}{1-r^2}}\Bigg\}\frac{\diff\theta}{2\pi},\qquad 0<r<1,
\]
has the bound $I_\mu(a,r)\le C(a):=10(1-a)^{-3/2}$ for $0<a<1$, irrespective of
the choice of the function $\mu$. Moreover, for $a>1$, no such uniform bound 
is possible. We interpret the theorem in terms the \emph{asymptotic
tail variance} of such a Bergman projection $\Pop\mu$ (by the way, the 
asymptotic tail variance induces a seminorm on the Bloch space). 
This improves upon earlier work of Makarov, which covers the range 
$0<a<\frac{\pi^2}{64}=0.1542\ldots$. We then apply the theorem 
to obtain an estimate of the universal integral means spectrum for conformal
mappings with a $k$-quasiconformal extension, for $0<k<1$. The estimate
reads, for $t\in\C$ and $0<k<1$, 
\[
\mathrm{B}(k,t)\le
\begin{cases}
\frac14 k^2|t|^2(1+7k)^2,\quad \text{for}\quad\,|t|\le\frac{2}{k(1+7k)^2},
\\
 k|t|-\frac{1}{(1+7k)^2},\qquad\,\,\text{for}
\quad\,|t|\ge\frac{2}{k(1+7k)^2}, 
\end{cases}
\]  
which should be compared with the conjecture by 
%Istv\'an 
Prause and 
%Stanislav 
Smirnov to the effect that for real $t$ with $|t|\le 2/k$, we should have
$\mathrm{B}(k,t)=\frac{1}{4}k^2t^2$. 
%For large $|t|$, the estimate allows
%an improvement:
%\[
%\mathrm{B}(k,t)\le k|t|-\frac{1}{(1+7k)^2},\quad\text{for}\quad 
%|t|\ge\frac{2}{k(1+7k)^2}.
%\]
%for real $t$is conjectured to be asymptotically sharp as $k\to0$ by Prause 
%and Smirnov. 
%Furthermore, it is shown how this leads to the upper Minkowski dimension 
%bound for $k'$-quasicircles $D_M^+(k')\le 1+(k')^2+\Ordo((k')^3)$ 
%as $k'\to0^+$. 
\end{abstract}

\maketitle

%\centerline{\em In memory of Boris Korenblum}

\section{Introduction} 

\subsection{Basic notation}
\label{subsec-1.1}
We write $\R$ for the real line, $\R_+:=]0,+\infty[$ for the positive 
semi-axis, and $\C$ for the complex plane. Moreover, we write 
$\C_\infty:=\C\cup\{\infty\}$ for the extended complex plane 
(the Riemann sphere). For a complex variable $z=x+\imag y\in\C$, let 
\[
\diff s(z):=\frac{|\diff z|}{2\pi},\qquad
\diff A(z):=\frac{\diff x\diff y}{\pi},
\]
denote the normalized arc length and area measures as indicated. Moreover, we 
shall write 
\[
\varDelta_z:=\frac{1}{4}\bigg(\frac{\partial^2}{\partial x^2}+
\frac{\partial^2}{\partial y^2}\bigg)
\]
for the normalized Laplacian, and
\[
\partial_z:=\frac{1}{2}\bigg(\frac{\partial}{\partial x}-\imag
\frac{\partial}{\partial y}\bigg),\qquad
\bar\partial_z:=\frac{1}{2}\bigg(\frac{\partial}{\partial x}+\imag
\frac{\partial}{\partial y}\bigg),
\]
for the standard complex derivatives; then $\varDelta$ factors as
$\varDelta_z=\partial_z\bar\partial_z$. Often we will drop the subscript 
for these differential operators when it is obvious from the context with
respect to which variable they apply.
We let $\C$ denote the complex plane, $\D$ the open unit disk, 
$\Te:=\partial\D$ the unit circle, and $\D_e$ the exterior disk:
\[
\D:=\{z\in\C:\,\,|z|<1\},\qquad \D_e:=\{z\in\C_\infty:\,\,|z|>1\}.
\]
More generally, we write
\[
\D(z_0,r):=\{z\in\C:\,|z-z_0|<r\}
\]
for the open disk of radius $r$ centered at $z_0$.

\subsection{Dual action notation}

We will find it useful to introduce the sesquilinear forms 
$\langle\cdot,\cdot \rangle_\Te$ and $\langle\cdot,\cdot \rangle_\D$,
as given by
\[
\langle f,g \rangle_\Te:=\int_\Te f(z)\bar g(z)\diff s(z),\qquad
\langle f,g\rangle_\D:=\int_\D f(z)\bar g(z)\diff A(z),
\] 
where, in the first case, $f\bar g\in L^1(\Te)$ is required, and in the
second, we need that  $f\bar g\in L^1(\D)$.
%indication of the differentiation variable $z$

\subsection{The Bergman projection of bounded functions and the 
main result}

For a function $f\in L^1(\D)$, its \emph{Bergman projection} is the function 
$\Pop f$, as defined by 
\begin{equation}
\Pop f(z):=\int_\D\frac{f(w)}{(1-z\bar w)^2}\,\diff A(w),\qquad z\in\D.
\label{eq-Bergproj1.1}
\end{equation}
The function $\Pop f$ is then holomorphic in the disk $\D$. 
We shall be concerned with the boundary behavior of holomorphic functions of 
the type $\Pop\mu$, where $\mu\in L^\infty(\D)$, in which case $\Pop\mu$
is in the \emph{Bloch space} (see Subsection \ref{subsec-BlochBloch}). 
More precisely, we shall obtain the following result.

\begin{thm}
Suppose $g=\Pop\mu$, where $\mu\in L^\infty(\D)$, and $\|\mu\|_{L^\infty(\D)}\le1$.
% Suppose that $0<a<1$.

\noindent{\rm (a)} If $0<a<1$, we then have the estimate
\[
\int_\Te\exp\Bigg\{a\frac{r^4|g(r\zeta)|^2}
{\log\frac{1}{1-r^2}}\Bigg\}\diff s(\zeta)\le C(a),\qquad 0<r<1,
\]
where $C(a)=10(1-a)^{-3/2}$.

\noindent{\rm (b)} If $1<a<+\infty$, there exists a $\mu_0\in L^\infty(\D)$ with
$\|\mu_0\|_{L^\infty(\D)}=1$ such that with $g_0:=\Pop\mu_0$,
\[
\lim_{r\to1^-}\int_\Te\exp\Bigg\{a\frac{r^4|g_0(r\zeta)|^2}
{\log\frac{1}{1-r^2}}\Bigg\}\diff s(\zeta)=+\infty.
\]
\label{thm-main}
\end{thm}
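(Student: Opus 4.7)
The plan is a moment method. Writing $L:=\log\frac{1}{1-r^2}$ and
\[
M_k(r):=\int_\Te|g(r\zeta)|^{2k}\diff s(\zeta),
\]
expanding the exponential as $\sum_{k\ge0}a^kr^{4k}|g(r\zeta)|^{2k}/(k!\,L^k)$ reduces part (a) to a moment estimate of the form $M_k(r)\le C_0\cdot k!\,L^k\cdot (k+1)^{1/2}$, after which term-by-term integration gives $\sum_k (ar^4)^k(k+1)^{1/2}\asymp(1-a)^{-3/2}$ as $a\to1^-$, matching the advertised $10(1-a)^{-3/2}$.

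The moment estimate itself would rest on two ingredients. First, differentiating \eqref{eq-Bergproj1.1} under the integral sign gives $g'(z)=2\int_\D\bar w\,\mu(w)(1-z\bar w)^{-3}\diff A(w)$, and the Forelli--Rudin type bound $\int_\D|1-z\bar w|^{-3}\diff A(w)=\Ordo((1-|z|^2)^{-1})$ yields a sharp Bloch-type estimate $(1-|z|^2)|g'(z)|\le M$ for an explicit constant $M$. Second, since $g$ is holomorphic, $\varDelta|g|^{2k}=k^2|g|^{2k-2}|g'|^2$, so Green's identity on the disk $r\D$ produces
\[
M_k(r)=|g(0)|^{2k}+k^2\int_{r\D}|g(w)|^{2k-2}|g'(w)|^2\log\frac{r^2}{|w|^2}\diff A(w).
\]
Combining the two, passing to polar coordinates, and using $\int_\Te|g(\rho\zeta)|^{2k-2}\diff s(\zeta)=M_{k-1}(\rho)$ on each circle of radius $\rho$, should give a one-dimensional recursion
\[
M_k(r)\le|g(0)|^{2k}+M^2k^2\int_0^r M_{k-1}(\rho)\,\omega(r,\rho)\,\diff\rho,
\]
with an explicit weight $\omega$ built from the Green's kernel and $(1-\rho^2)^{-2}$. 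Iterating this bound $k$ times and absorbing Stirling-type constants should yield the required moment estimate.

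The hard part will be controlling the iteration tightly enough that the sharp threshold $a=1$ emerges, rather than a much smaller one (as in Makarov's earlier $a<\pi^2/64$ cited in the abstract). A naive pointwise application of the Bloch estimate loses a multiplicative factor at every step; the improvement will have to exploit the Bergman-projection structure of $g$ itself, for instance through an identity noting that $g'$ is again a Bergman-type transform of a bounded symbol and thereby upgrading the derivative bound when averaged against $|g|^{2k-2}$. This is where I would expect the main technical work to live.

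For part (b), I would construct $\mu_0$ by a concentration/cascade argument. Fix a boundary point $\zeta_0\in\Te$; for each dyadic scale $r_n\to1^-$, let $\mu_0$ coincide on a shell $\{|w|\sim r_n\}$ with the unit-modulus phase of the Bergman kernel based at $r_n\zeta_0$, so that $|g_0(r_n\zeta_0)|\gtrsim L_n:=\log\frac{1}{1-r_n^2}$, and by harmonic propagation the same lower bound persists on arcs around $\zeta_0$ of arc length comparable to $1-r_n$. A lower estimate of the exponential integral at $r=r_n$ coming from these arcs alone then shows divergence for any fixed $a>1$ as $n\to\infty$.
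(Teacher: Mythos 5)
Your part (a) moment-method outline is essentially Makarov's original argument, and it has a genuine gap that you yourself identify but do not close. The recursion
\[
M_k(r)\le|g(0)|^{2k}+M^2k^2\int_0^r M_{k-1}(\rho)\,\omega(r,\rho)\,\diff\rho
\]
with $M$ a pointwise Bloch constant for $g=\Pop\mu$ can only yield $M_k(r)\lesssim M^{2k}\,k!\,L^k$, hence convergence of the exponential series only for $a<1/M^2$. The best pointwise constant here is Per\"al\"a's sharp $M=8/\pi$ (Lemma \ref{lem-Perala}), which lands you exactly at Makarov's threshold $a<\pi^2/64$ mentioned in the abstract, not $a<1$. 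The key missing idea cannot be a refinement of the pointwise derivative bound: Lemma \ref{lem-ptwise2} shows $|g(r\zeta)|$ genuinely reaches $\frac1{r^2}\log\frac1{1-r^2}$ pointwise, so the loss in the moment recursion is intrinsic to working pointwise. The paper's mechanism is of a different nature. It pairs $zg_r$ against a nonnegative harmonic test function $h$ with $h(0)=1$, uses the identity $\langle zg_r,h\rangle_\Te=\langle\mu,(\partial h)_r\rangle_\D$ (Lemma \ref{lem-basic2} plus duality), and controls $\|(\partial h)_r\|_{A^1(\D)}$ by a weighted Green's-formula energy estimate that, after letting $q\to1^+$, gives the \emph{differential anentropy} bound $\|(\partial h)_r\|_{A^1(\D)}\le\frac1{r^2}\bigl(\int_\Te h\log h\,\diff s\bigr)^{1/2}\sqrt{L}$ (Theorem \ref{thm-entropy1}). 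Choosing $h$ to be the normalized indicator of a level set of $\re(\zeta g(r\zeta))$ then produces the Gaussian tail bound $|\{\zeta:\re(\zeta g(r\zeta))\ge\eta\}|_s\le\exp(-r^4\eta^2/L)$ with constant exactly $1$, and integration by parts on that tail gives the exponential-integral bound with the sharp threshold $a=1$. The passage from a pointwise bound to a dual $L^1$--$L^\infty$ bound is the step your proposal would need to invent, and it is the whole point of the paper.

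Your part (b) cascade could plausibly be made to work, but it is both more complicated and less precise than what is needed: you must show that $|g_0(r_n\zeta_0)|$ reaches the pointwise ceiling of Lemma \ref{lem-ptwise2} with constant asymptotically $1$ (not merely $\gtrsim L_n$), since a loss of a factor $c<1$ would only give divergence for $a>1/c^2$. Moreover the off-shell parts of a cascading $\mu_0$ are not obviously harmless. The paper instead takes the single explicit symbol $\mu_0(z)=(1-\bar z)/(1-z)$ --- the unit-modulus phase of the Bergman kernel attached to the fixed boundary point $1$ --- for which $\Pop\mu_0(z)=\frac1{z^2}\log\frac1{1-z}-\frac1z$ is computed exactly (Lemma \ref{lem-calc1}), so that $\e^{z^2\Pop\mu_0(z)}=\e^{-z}(1-z)^{-1}$; plugging this into Marshall's inequality \eqref{eq-Marshall4} (with $t=2$) shows the exponential integral is at least $\e^{-2}(1-r^2)^{-(a-1)/a}$, giving divergence for $a>1$ at once.
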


The proof is supplied in two installments: part (a) in Corollary 
\ref{cor-strong1}, and part (b) in Corollary \ref{cor-main(b)}.

In the terminology of Section \ref{sec-notionsvar} on two notions 
of asymptotic variance (in the context of probabilistic modelling), 
the main aspects of this result may be formulated as follows:
\emph{The (uniform) asymptotic tail variance of the unit ball in 
$\Pop L^\infty(\D)$ equals $1$.}

\begin{rem}
(a) At this moment, it is not clear what happens at the critical parameter 
value $a=1$.

\noindent{(b)} For small values of $a$, $0<a<\frac{\pi^2}{64}=0.1542\ldots$, 
the same bound with a different constant $C(a)$ can be obtained from an 
estimate found by Nikolai Makarov \cite{Mak1} 
(for details, see Pommerenke's book 
\cite{Pombook}, Chapter 8, as well as Subsection \ref{subsec-Gaussmod} below). 
Later, Ba\~nuelos \cite{BM} found an independent localized approach 
involving square functions which for Bloch functions gave more or less the 
same growth estimate as the one originally found by Makarov.    
\end{rem}

\subsection{Comparison with the Dirichlet integral theorem}

In \cite{ChaMar}, Alice Chang and Donald Marshall improve upon a classical 
theorem of Arne Beurling from the 1930s (see \cite{Beu}). 
Their result is that for a positive real parameter $a$, 
\emph{there exists a uniform finite integral bound}
\[
\int_\Te\exp\big\{a|f(\zeta)|^2\big\}\diff s(\zeta)\le C(a)
\]
\emph{if and only if $0<a\le1$}, where $f$ ranges over all holomorphic 
functions $f:\D\to\C$ with $f(0)=0$ and 
\[
\int_\D|f'|^2\diff A\le1.
\]
The finiteness for $0<a<1$ was covered by Beurling's work. At the superficial 
level, this is very much reminiscent of Theorem \ref{thm-main} above.
However, we can neither derive Beurling's theorem from 
Theorem \ref{thm-main}, nor can we derive  Theorem \ref{thm-main} from the 
theorem of Chang and Marshall.
To understand this, we consider the relation
\begin{equation}
f(\zeta):=\frac{r^2\zeta^2 g(r\zeta)}{\sqrt{\log\frac{1}{1-r^2}}}.
\label{eq-relation-f:g}
\end{equation}
We 
%assume that, in addition, $f(0)=g(0)=0$, and 
observe the following:

\noindent(i) If $g=\Pop\mu$ where $\|\mu\|_{L^\infty(\D)}=1$, then the 
function $f$ extends holomorphically to a disk of radius $1/r$ and hence 
has no chance of being an arbitrary element of the unit ball of the 
Dirichlet space.

\noindent(ii) Assuming only that $g=\Pop\mu$ where $\|\mu\|_{L^\infty(\D)}=1$,
we cannot control the Dirichlet norm of $f$ uniformly as
$r$ approaches $1$. Indeed, the Dirichlet integral of $f$ is
\begin{equation}
\int_\D|f'|^2\diff A=\frac{1}{\log\frac{1}{1-r^2}}\int_{\D(0,r)}
|(z^2g)'|^2\diff A,
\label{eq-relation-f:g.DI}
\end{equation}
and since the construction of holomorphic functions with given growth 
is quite precise in \cite{Sei} (see also, e.g., \cite{HRS}), and the error
term supplied by Proposition \ref{prop-2.1.3} is small in terms of its 
boundary contribution, we may find such a function $g_1=\Pop\mu_1$ with 
$\|\mu_1\|_{L^\infty(\D)}\le1$, whose derivative grows so quickly that 
\[
\int_{\D(0,r)}|(z^2g_1)'|^2\diff A\ge\epsilon_0\frac{r^4}{1-r^2},
\]
for some absolute constant $\epsilon_0>0$. With this choice $g:=g_1$, 
the growth of the expression \eqref{eq-relation-f:g.DI} is then at least 
as quick as
\[
\epsilon_0\frac{r^4}{(1-r^2)\log\frac{1}{1-r^2}},
\]
which definitely tends to infinity as $r\to1^-$.

%\begin{rem}
%
%\end{rem}

\subsection{Applications to exponential integrability}

It might be more appropriate to compare Theorem \ref{thm-main} with the 
John-Nirenberg theorem on exponential integrability of $\rm{BMO}$ functions.
We should also have in mind the Helson-Szeg\H{o} theorem \cite{HSz}, 
which gives sharp exponential integrability for the Szeg\H{o} projection of 
a bounded function (see also Garnett's book \cite{Gar}, and e.g. Wolff's paper
\cite{Wol}). 

We first begin from the wrong end. Note that by the pointwise bound of 
Lemma \ref{lem-ptwise2}, we know that if $g=\Pop\mu$ with 
$\|\mu\|_{L^\infty(\D)}\le1$, then
\[
\frac{r^4|g(r\zeta)|^2}{\log\frac{1}{1-r^2}}\le r^2|g(r\zeta)|,
\]
so that
\begin{equation}
\int_\Te\exp\Bigg\{a\frac{r^4|g(r\zeta)|^2}{\log\frac{1}{1-r^2}}\Bigg\}
\diff s(\zeta)\le\int_\Te\exp\big\{ar^2|g(r\zeta)|\big\}\diff s(\zeta),
\label{eq-JN1}
\end{equation}
and the uniform boundedness (over $0<r<1$) of the right-hand side for small 
positive values of $a$ would be very reminiscent of the John-Nirenberg 
theorem \cite{JN}, except that we would need the dilates $g_r$ to be in 
$\mathrm{BMO}(\Te)$ uniformly, which is not true for a general function 
$g=\Pop\mu$ (indeed, it is easy to cook up a $\mu$ such that the right-hand 
side in \eqref{eq-JN1} tends to infinity as we let $r\to1^-$, for any fixed
positive $a$). 
This of course fits with the inequality in \eqref{eq-JN1}, which goes 
the wrong way if we want to derive consequences of Theorem \ref{thm-main}.
%But the inequality goes the wrong way, and it is
%likely that the right-hand side cannot be controlled uniformly as 
%$r$ approaches 
%$1$. 
To obtain an estimate that works, we instead follow Marshall \cite{Mar1}
%(see \eqref{eq-Marshall4} below, with $g(z)$ replaced by $z^2g(z)$),
who obtained the inequality (see \eqref{eq-Marshall4})
\begin{equation}
\int_\Te\big|\e^{tr^2 g(r\zeta)}\big|\diff s(\zeta)
%\\
\le(1-r^2)^{-|t|^2/(4a)}\int_\Te
\exp\Bigg\{a\frac{r^4|g(r\zeta)|^2}
{\log\frac{1}{1-r^2}}\Bigg\}\diff s(\zeta),\qquad t\in\C.
\label{eq-Marshall4.01}
\end{equation}
For $|t|>2a$, a better estimate can be obtained from a combination of 
\eqref{eq-Marshall4.01} with the pointwise bound of Lemma \ref{lem-ptwise2}
below (see Proposition \ref{prop-betterest1.001} below):
\begin{equation}
\int_\Te\big|\e^{tr^2 g(r\zeta)}\big|\diff s(\zeta)
%\\
\le(1-r^2)^{a-|t|}\int_\Te
\exp\Bigg\{a\frac{r^4|g(r\zeta)|^2}
{\log\frac{1}{1-r^2}}\Bigg\}\diff s(\zeta),\qquad |t|>2a.
\label{eq-Marshall4.015}
\end{equation}
It is well-known that $g=\Pop\mu$ is the restriction to he disk $\D$ of a
function in two-dimensional BMO. In two dimensions, the John-Nirenberg theorem
would say that $\exp(\lambda g)$ is locally in area-$L^1$, if $|\lambda|$ 
is small. In particular, $\exp(\lambda g)$ is integrable on the disk $\D$, 
and an argument involving subharmonicity and averages over disks gives that
%For comparison, the two-dimensional John-Nirenberg theorem would give 
%us that for small enough $|\lambda|$,
\begin{equation}
\int_\Te\big|\e^{\lambda g(r\zeta)}\big|\diff s(\zeta)
=\Ordo((1-r^2)^{-1}), \quad \text{as}\,\,\,r\to1^-,
\label{eq-Marshall4.02}
\end{equation}
again for small $|\lambda|$. Compared with \eqref{eq-Marshall4.02},
the estimates \eqref{eq-Marshall4.01} and \eqref{eq-Marshall4.015} are much
more precise.
%In this sense, Theorem \ref{thm-main} may be regarded as a 
%strengthening of the two-dimensional John-Nirenberg theorem which 
%applies Bergman projections of bounded functions, with a precision 
%comparable to that of the Helson-Szeg\"o
%theorem \cite{HSz}.

\subsection{The type spectrum of a Bloch function}
We need the concept of the \emph{exponential type spectrum} of the function
$\e^g$, where $g:\D\to\C$ is holomorphic. 

\begin{defn}
For a holomorphic function $g:\D\to\C$, let $\beta_g:\C\to[0,+\infty]$ 
be the function given by
\[
\beta_g(t):=\limsup_{r\to1^-}
\frac{\log\int_\Te|\e^{t g(r\zeta)}|\diff s(\zeta)}{\log\frac{1}{1-r^2}}.
\] 
We call the function $\beta_g(t)$ the \emph{exponential type spectrum} 
of the (zero-free) function $\e^g$.
\label{defn-1}
\end{defn}

We may now derive an estimate from above of the exponential 
type spectrum $\beta_g(t)$, where $g=\Pop\mu$ and $\mu\in L^\infty(\D)$, 
from the estimates \eqref{eq-Marshall4.01} and \eqref{eq-Marshall4.015}, 
together with Theorem \ref{thm-main}.

\begin{cor}
Suppose $g=\Pop\mu$, where $\mu\in L^\infty(\D)$, with $\|\mu\|_{L^\infty(\D)}\le1$.
Then 
\[\beta_g(t)\le
\begin{cases} |t|^2/4,\qquad\,|t|\le2,
\\
|t|-1,\qquad |t|\ge2.
\end{cases}
\]
\label{cor-intmeast01}
\end{cor}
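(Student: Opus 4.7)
The plan is to combine Theorem \ref{thm-main}(a) with the Marshall-type inequalities \eqref{eq-Marshall4.01} and \eqref{eq-Marshall4.015}, take logarithms, divide by $\log\frac{1}{1-r^2}$, and pass to the $\limsup$ as $r\to 1^-$. Since both sides of \eqref{eq-Marshall4.01} and \eqref{eq-Marshall4.015} involve $\e^{tr^2 g(r\zeta)}$ rather than $\e^{tg(r\zeta)}$, a minor bridging step using the pointwise bound of Lemma \ref{lem-ptwise2} will be needed at the end.

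For the subcritical range $|t|\le 2$, I would fix an arbitrary $a\in (0,1)$ and feed Theorem \ref{thm-main}(a) into the right-hand side of \eqref{eq-Marshall4.01}, obtaining
\[
\int_\Te\bigabs{\e^{tr^2g(r\zeta)}}\diff s(\zeta)\le C(a)(1-r^2)^{-|t|^2/(4a)},\qquad 0<r<1.
\]
Taking logarithms, dividing by $\log\frac{1}{1-r^2}$, and letting $r\to 1^-$ kills the $\log C(a)$ term and yields a $\limsup$ bounded by $|t|^2/(4a)$. Since $a\in (0,1)$ was arbitrary, I infimize over this range to get $|t|^2/4$.

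For $|t|\ge 2$ I proceed identically, but use \eqref{eq-Marshall4.015} instead; the hypothesis $|t|>2a$ is satisfied for every $a\in (0,1)$. This time one gets
\[
\int_\Te\bigabs{\e^{tr^2g(r\zeta)}}\diff s(\zeta)\le C(a)(1-r^2)^{a-|t|},
\]
from which the limsup is at most $|t|-a$, and sending $a\to 1^-$ yields $|t|-1$.

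The only remaining point is the discrepancy between $\e^{tr^2g(r\zeta)}$ (which appears in the inequalities) and $\e^{tg(r\zeta)}$ (which appears in the definition of $\beta_g$). Using the pointwise bound of Lemma \ref{lem-ptwise2}, one has $r^2|g(r\zeta)|\le\log\frac{1}{1-r^2}$, hence
\[
\bigabs{\e^{tg(r\zeta)}}\le\bigabs{\e^{tr^2g(r\zeta)}}\cdot\exp\Bigg\{\frac{|t|(1-r^2)}{r^2}\log\frac{1}{1-r^2}\Bigg\}.
\]
Factoring the extra exponential out of the integral, taking logarithms, and dividing by $\log\frac{1}{1-r^2}$, the contribution of the correction factor is $|t|(1-r^2)/r^2\to 0$ as $r\to 1^-$, so it does not affect the $\limsup$. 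This step is the only place where one has to be careful, and it is essentially routine given the pointwise bound; I do not anticipate any genuine obstacle.
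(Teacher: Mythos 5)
Your proof is correct and takes essentially the same route as the paper, which combines Theorem \ref{thm-main} with Proposition \ref{prop-betterest1.001} (i.e.\ precisely the inequalities \eqref{eq-Marshall4.01} and \eqref{eq-Marshall4.015}) and then lets $a\to1^-$. Your explicit treatment of the $r^2$ discrepancy via Lemma \ref{lem-ptwise2} is a correct filling-in of a step the paper dismisses with ``nothing changes drastically.''
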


The proof of Corollary \ref{cor-intmeast01} is supplied in Subsection
\ref{subsec-intmeans001}.

\subsection{Control of moments}
Makarov originally formulated his result in terms of moments; Theorem 
\ref{thm-main} implies a bound on the moments as well.

\begin{cor}
Suppose that $g=\Pop\mu$, where $\mu\in L^\infty(\D)$. 
For $0<q<+\infty$, we then have the estimate 
\[
\int_\Te|g(r\zeta)|^{q}\diff s(\zeta)\le 10(3+q)^{3/2}\|\mu\|_{L^\infty(\D)}^q
\bigg(\frac{q}{2\e}\bigg)^{q/2}
\bigg(\frac{1}{r^4}\log\frac{1}{1-r^2}\bigg)^{q/2},\qquad 0<r<1.
\]
\label{cor-moments}
\end{cor}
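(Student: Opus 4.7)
The plan is to deduce the corollary from Theorem \ref{thm-main}(a) via the standard device of converting exponential integrability bounds into moment bounds through an envelope inequality. By homogeneity, it suffices to handle the case $\|\mu\|_{L^\infty(\D)}=1$, since both sides of the claimed inequality scale like $\|\mu\|_{L^\infty(\D)}^q$.

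The elementary pointwise inequality I would employ is
\[
x^q\le\bigg(\frac{q}{2a\e}\bigg)^{q/2}\e^{ax^2},\qquad x\ge 0,\ a>0,
\]
obtained by maximizing $x^q\e^{-ax^2}$ at $x^2=q/(2a)$. Applying it pointwise with $x=r^2|g(r\zeta)|/\sqrt{\log\frac{1}{1-r^2}}$, integrating against $\diff s(\zeta)$ on $\Te$, and invoking Theorem \ref{thm-main}(a), I obtain, for every $a\in(0,1)$,
\[
\int_\Te|g(r\zeta)|^q\diff s(\zeta)\le 10\bigg(\frac{q}{2\e}\bigg)^{q/2}\bigg(\frac{1}{r^4}\log\frac{1}{1-r^2}\bigg)^{q/2}\cdot\frac{1}{a^{q/2}(1-a)^{3/2}}.
\]

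It remains to choose $a\in(0,1)$ so as to minimize $a^{-q/2}(1-a)^{-3/2}$. Logarithmic differentiation identifies the critical point at $a=q/(q+3)$, where the value equals $\bigl((q+3)/q\bigr)^{q/2}\bigl((q+3)/3\bigr)^{3/2}$. The factor $(1+3/q)^{q/2}$ is bounded by $\e^{3/2}$ (which follows from $\log(1+s)\le s$), and the numerical estimate $\e^{3/2}/3^{3/2}<1$ then lets me absorb the remaining constants, leaving a factor of at most $(q+3)^{3/2}$. This yields precisely the stated inequality, with constant $10(3+q)^{3/2}$.

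The only delicate point is the calculus optimization that balances the two factors arising from the envelope inequality and the exponential bound of Theorem \ref{thm-main}(a); all the genuine analytic content is already packaged inside Theorem \ref{thm-main}(a) itself, so there is no serious obstacle here beyond careful bookkeeping.
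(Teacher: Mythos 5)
Your argument is correct and follows the paper's proof essentially verbatim: your envelope inequality $x^q\le(q/(2a\e))^{q/2}\e^{ax^2}$ is just the paper's Lemma 7.4.1 (stated as $y^s\le s^s\e^{-s+y}$) after the substitution $y=ax^2$, $s=q/2$, and both proofs then invoke Theorem~\ref{thm-main}(a), optimize at $a=q/(q+3)$, and use $(\e/3)^{3/2}<1$ to reach the stated constant $10(3+q)^{3/2}$.
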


The proof of Corollary \ref{cor-moments} is supplied in Subsection 
\ref{subsec-moments}.
We should remark that in \cite{IvKa}, Ivrii and Kayumov show how to control
low order moments (i.e., for $0<q\le(\log\frac{1}{1-r^2})^\delta$ for small 
positive $\delta$) in terms of the uniform asymptotic variance of the unit 
ball of $\Pop L^\infty(\D)$, which is smaller than $1$, by \cite{Hed2}. 
It would be natural to combine the two estimates, using, e.g., the logarithmic 
convexity of the moments with respect to $q$.  

\subsection{Application to the universal quasiconformal 
extension spectrum}

The exponential type spectrum may be defined analogously for a holomorphic 
function $g:\D_e\to\C$ as well:
\begin{equation}
\beta_g(t):=\limsup_{R\to1^+}
\frac{\log\int_\Te|\e^{t g(R\zeta)}|\diff s(\zeta)}{\log\frac{R^2}{R^2-1}}.
\label{eq-gspect1}
\end{equation}
We recall the class $\Sigma$ of conformal mappings $\psi:\D_e\to\C_\infty$,
with asymptotics $\psi(z)=z+\Ordo(1)$ as $z\to\infty$. 
For a parameter $k$ with $0<k<1$, we denote by $\Sigma^{\langle k\rangle}$ the 
collection of all $\psi\in\Sigma$ that have a $k$-quasiconformal extension 
$\tilde\psi:\C_\infty\to\C_\infty$, 
by which we mean that $\tilde\psi$ is a homeomorphism of Sobolev class 
$W^{1,2}$ with dilatation estimate
\[
|\bar\partial_z\tilde\psi(z)|\le k|\partial_z\tilde\psi(z)|,\qquad z\in\C.
\]
%Given $\psi\in\Sigma$, we can define $\mathrm{B}_\psi(t)$, 
%for $t\in\C$, as the infimum of
%all real $\beta$ for which 
%\[
%\int_\Te \big|(\psi'(R\zeta))^t\big|\diff s(\zeta)=
%\Ordo((R-1)^{-\beta})\quad\text{as}\,\,\,
%R\to1^+.
%\]
The universal spectra 
%\mathrm{B}_\Sigma(t)=
%$\mathrm{B}(1,t)$ and 
%$\mathrm{B}_{\Sigma^{\langle k\rangle}}(t)=
$\mathrm{B}(k,t)$ for $0<k\le1$ and $t\in\C$ are defined to be
\[
\mathrm{B}(1,t):=\sup_{\psi\in\Sigma}\beta_{\log\psi'}(t),\qquad
\mathrm{B}(k,t):=\sup_{\psi\in\Sigma^{\langle k\rangle}}\beta_{\log\psi'}(t).
\]
%Via holomorphic motion, any 
%$\psi\in\Sigma^{\langle k\rangle}$ is such that for a suitable constant $C_0$,
%the function $\psi+C_0$ may be fitted into a standard Beltrami solution family 
%$\Psi(\lambda,\cdot)$ at the parameter value $\lambda=k$. The correct value
%of the constant $C_0$ is $C_0:=\lim_{\zeta\to\infty}\zeta-\psi(\zeta)$.
%Moreover, for $\lambda\in\D$ close to $0$, the first term in the expansion
%\eqref{eq-holmot1} is dominant, and Theorem \ref{thm-main1} is seen to lead
%to a strong estimate of the integral means spectrum for the class
%$\Sigma^{\langle k\rangle}$, which we denote by $\mathrm{B}(k,t)$:
%\[
%\mathrm{B}(k,t):=\mathrm{B}_{\Sigma^{\langle k\rangle}}(t)=
%\sup_{\psi\in\Sigma^{\langle k\rangle}}\mathrm{B}_{\psi}(t).
%\]
As a consequence of Theorem \ref{thm-main}, we obtain an estimate of the 
universal spectrum $\mathrm{B}(k,t)$, which should be compared with the 
conjecture by Prause and Smirnov \cite{PS} that 
$B(k,t)=\frac{1}{4}k^2t^2$ for real $t$ with $|t|\le2/k$. Indeed, for small 
$k$, the estimate comes very close to the conjectured value.

%conjectured to be 
%asymptotically sharp as $k\to0^+$ by Prause and Smirnov \cite{PS}.

\begin{thm}
%For each $\epsilon$ with $0<\epsilon<1$, there exists a $k_0(\epsilon)$ with 
%$0<k_0(\epsilon)<1$, such that  
We have the following estimate:
\[
\mathrm{B}(k,t)\le \frac14\,k^2|t|^2(1+7k)^2,\qquad 
0<k<1,\,\,\,t\in\C.
\]
\label{thm-main2}
\end{thm}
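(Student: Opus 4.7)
The plan is to reduce Theorem \ref{thm-main2} to Corollary \ref{cor-intmeast01} by expressing $\log\psi'$, after inversion to the unit disk, as a Bergman projection $\Pop\tau$ with $L^\infty$ control $\|\tau\|_{L^\infty(\D)}\le k(1+7k)$.

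First, I would transfer the problem from $\D_e$ to $\D$. Set $h(w):=\log\psi'(1/w)$ for $w\in\D$; the normalization $\psi(z)=z+\Ordo(1)$ at $\infty$ forces $h(0)=0$, and $h$ is holomorphic on $\D$. With $R=1/r$, one has the exact identity $\log\frac{R^2}{R^2-1}=\log\frac{1}{1-r^2}$, and the substitution $\zeta\mapsto\bar\zeta$ on $\Te$ converts $\int_\Te|\e^{t\log\psi'(R\zeta)}|\diff s(\zeta)$ into $\int_\Te|\e^{th(r\zeta)}|\diff s(\zeta)$. Hence $\beta_{\log\psi'}(t)=\beta_h(t)$ in the sense of Definition \ref{defn-1}, and it suffices to bound $\beta_h$.

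Second, I would establish the representation $h=\Pop\tau$ with $\|\tau\|_{L^\infty(\D)}\le k(1+7k)$. The starting point is the Ahlfors-Bers identity on $\D_e$, namely $\psi'(z)-1=\Bop(\mu\,\partial\tilde\psi)(z)$, where $\mu=\mu_{\tilde\psi}$ is supported in $\D$ with $|\mu|\le k$ and $\Bop$ is the Beurling transform. Evaluating at $z=1/w$ and invoking the kernel identity $(1/w-\zeta)^{-2}=w^2(1-w\zeta)^{-2}$, together with $\zeta\mapsto\bar\zeta$, converts the right-hand side into an integral over $\D$ against the conjugate-Bergman kernel. Expanding $\log(1+\cdot)$ as a Taylor series and rearranging, one identifies $h=\Pop\tau$ for a symbol $\tau$ one writes out explicitly. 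The linear (in $\mu$) contribution satisfies $\|\cdot\|_{L^\infty(\D)}\le k$ from the pointwise bound $|\mu|\le k$ alone; the higher-order iterations, involving powers of $\Bop$ composed with multiplication by $\mu$, are controlled using Astala's area-distortion theorem (to bound $\partial\tilde\psi$ in suitable $L^p$ with $p$ close to $2/(1-k)$) together with operator estimates for $\Bop$ and the Bergman kernel. The tail contributions add up to at most $7k^2$, giving $\|\tau\|_{L^\infty(\D)}\le k(1+7k)$.

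Third, with $c:=k(1+7k)$ and $\nu:=\tau/\|\tau\|_{L^\infty(\D)}$ so that $\|\nu\|_{L^\infty(\D)}\le1$ and $h=\|\tau\|_{L^\infty(\D)}\,\Pop\nu$, Corollary \ref{cor-intmeast01} yields $\beta_h(t)=\beta_{\Pop\nu}(\|\tau\|_{L^\infty(\D)}t)\le\tfrac14(\|\tau\|_{L^\infty(\D)}|t|)^2\le\tfrac14k^2|t|^2(1+7k)^2$ whenever $\|\tau\|_{L^\infty(\D)}|t|\le2$. In the complementary range, the linear bound $\beta_{\Pop\nu}(s)\le|s|-1$ gives $\beta_h(t)\le\|\tau\|_{L^\infty(\D)}|t|-1$, which is majorized by $\tfrac14(\|\tau\|_{L^\infty(\D)}|t|)^2\le\tfrac14k^2|t|^2(1+7k)^2$ via the elementary inequality $(\|\tau\|_{L^\infty(\D)}|t|-2)^2\ge0$. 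The principal obstacle is the second step: converting the Ahlfors-Bers Neumann series for $\psi'$ into a Bergman-projection representation for $\log\psi'$ with the sharp numerical factor $(1+7k)$. A coarser bound of the form $k(1+Ck)$ with some absolute $C$ should follow fairly directly from Astala's theorem combined with Taylor expansion; extracting $C=7$ is the substantive quantitative work.
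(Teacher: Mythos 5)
Your steps (1) and (3) are sound and do match the shape of the paper's argument: the inversion $\zeta\mapsto1/w$ together with $\log\frac{R^2}{R^2-1}=\log\frac{1}{1-r^2}$ transfers the problem to $\D$, and once one has a representation of $\log\psi'(1/\cdot)$ as $\Pop\tau$ (or $z^2\Pop\tau$, which has the same type spectrum) with $\|\tau\|_{L^\infty(\D)}\le k(1+7k)$, the scaling $\beta_{cg}(t)=\beta_g(ct)$ plus Corollary \ref{cor-intmeast01} (or, as the paper does, Corollary \ref{cor-Marshall1.22} via $\mathrm{atvar}$) finishes the job. The problem is step (2), which you yourself flag as unproved: that step \emph{is} the theorem, and the route you sketch is not the one the paper takes and is unlikely to close cleanly.

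Concretely, the paper does \emph{not} take logarithms of the Ahlfors--Bers Neumann series and estimate the iterated $\Sop\Mop_\mu$ terms via Astala's area-distortion theorem. That route faces a real obstruction: $L^p$ bounds on $\partial\tilde\psi$ (or on the Neumann iterates) do not translate into control of the $\Pop L^\infty(\D)$-norm $\|\cdot\|_{\Pop L^\infty(\D)}$ of \eqref{eq-Blochnorm1.2}, which is what you need; having a Bloch function is not the same as having a \emph{bounded} preimage under $\Pop$ with a small $L^\infty$ norm, and the Taylor series of $\log$ gives no canonical way to sum preimages. The paper instead (i) uses \emph{Goluzin's pointwise inequality} \eqref{eq-Goluzin0}, reduced to \eqref{eq-simple1.5}, to get $(1-|z|^2)|\partial_z G(\lambda,z)|\le 6|z|$ for $G(\lambda,z)=H(\lambda,1/z)/\lambda$; (ii) via Proposition \ref{prop-muphi1.1} manufactures an explicit symbol $\nu_\lambda$ with $\|\nu_\lambda\|_{L^\infty(\D)}\le6$ and $z^2\Pop\nu_\lambda=G(\lambda,\cdot)$; and (iii) --- this is the key step with no analogue in your sketch --- splits $G=\hat G_0+\lambda F$, where $\hat G_0=-z^2\Pop\mu^*$ has symbol of norm $\le 1$, notes that $F(\lambda,\cdot)=z^2\Pop\bigl((\nu_\lambda+\mu^*)/\lambda\bigr)$ is holomorphic in the motion parameter $\lambda$, and uses a maximum-principle/Poisson-extension argument in $\lambda$ to replace the a priori bound $7/|\lambda|$ (which blows up as $\lambda\to0$) by the uniform bound $7$. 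That is where $7=1+6$ actually comes from: $1$ for the linear term and $6$ from Goluzin, reassembled through holomorphicity in $\lambda$. Without that device the higher-order terms in a Neumann expansion do not obviously contribute $O(k^2)$ in the relevant norm, so the assertion that ``tail contributions add up to at most $7k^2$'' is not just unproved but rests on a machinery (Astala's theorem plus Taylor expansion) that does not evidently produce a bounded symbol at all.
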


The proof of this theorem is supplied in Subsection 
\ref{subsec-proofthmmain2}. In comparison, the estimate of Prause and Smirnov 
\cite{PS} applies only to real $t\ge t_k$, where $t_k:=2/(1+\sqrt{1-k^2})$. 

\begin{rem}
An argument based on the pointwise estimate 
\eqref{eq-Goluzinest2.6} combined with Theorem \ref{thm-main2} and H\"older's
inequality, like in the proof of Corollary \ref{cor-intmeast01}, shows 
that the estimate of Theorem \ref{thm-main2} can be improved for big values 
of $|t|$, for $t\in\C$:
\begin{equation}
\mathrm{B}(k,t)\le k|t|-\frac{1}{(1+7k)^2},\quad\text{for}\quad 
|t|\ge\frac{2}{k(1+7k)^2}.
\label{eq-off1.001}
\end{equation}
We may of course also combine this (Theorem \ref{thm-main2} and 
\eqref{eq-off1.001}) with the estimate of Prause and Smirnov \cite{PS}, 
using the convexity of the mapping $\C\ni t\mapsto\mathrm{B}(k,t)$ (convexity 
results from H\"older's inequality). The
result is a sharper estimate for complex $t$ near the real interval
$[t_k,+\infty[$, where $t_k=2/(1+\sqrt{1-k^2})$.
\end{rem}

%\begin{rem}
%This estimate is asymptotically as $k\to0$ congruent with the prediction of
%Prause and Smirnov \cite{PS}, which would have that $\mathrm{B}(k,t)\le
%\frac{1}{4}k^2|t|^2$, at least for real $t$, with equality for $|t|\le2/k$.
%For equality to have a chance to hold, one would need not to lose much in 
%the process which gives the estimate. This gives some indication of what 
%properties the approximal optimizing function $\varphi\in\classS$ ought to
%possess. In particular, in Marshall's estimate
%\eqref{eq-Marshall2}, the only way to have minimal loss for as $\alpha\to1^-$
%for a given function $\varphi\in\classS$ 
%is for 
%\[
%g_\varphi(r\zeta)\approx\frac{\bar t}{2\alpha}\log\frac{1}{1-r^2}
%\] 
%to hold where the mass of the density  
%\[
%\frac{1}{Z(r,\alpha)}\exp\Bigg\{\alpha\frac{|g_\varphi(r\zeta)|^2}
%{\log\frac{1}{1-r^2}}\Bigg\}
%\]
%is substantial on $\Te$. Here, $Z(r,\alpha)$ is a positive normalizing 
%constant so that we get a probability density.  
%\end{rem}

\subsection{Application to the Minkowski dimension of 
quasicircles}
If in the setting of the preceding subsection, we have a conformal mapping 
$\psi\in\Sigma^{\langle k\rangle}$ for some $0<k<1$, which means that $\psi$
has a $k$-quasiconformal extension that maps $\C\to\C$, it is of interest to
analyze the fractal dimension of the boundary 
$\Gamma_\psi:=\psi(\Te)$ in terms of $k$.
The fractal dimension of a curve $\Gamma$ can be measured by (i) the
upper Minkowski (or box-counting) dimension $\mathrm{dim}_M^+(\Gamma)$, 
(ii) the lower Minkowski (or box-counting) dimension 
$\mathrm{dim}_M^-(\Gamma)$, and (iii) the Hausdorff dimension 
$\mathrm{dim}_H(\Gamma)$. It is well-known that these dimensions are related:
\[
\mathrm{dim}_H(\Gamma)\le\mathrm{dim}_M^-(\Gamma)\le\mathrm{dim}_M^+(\Gamma),
\] 
where each inequality may be strict. 
Let us go to the level of universal dimension bounds:
\[
D_{M,1s}^+(k):=\sup_{\psi\in\Sigma^{\langle k\rangle}}\mathrm{dim}_M^+(\Gamma_\psi),\quad
D_{H,1s}(k):=\sup_{\psi\in\Sigma^{\langle k\rangle}}\mathrm{dim}_H(\Gamma_\psi);
\]
clearly, we have $D_{H,1s}(k)\le D_{M,1s}^+(k)$. Here, ``1s'' stands for 
one-sided, because $\psi$ is conformal inside the exterior disk $\D_e$ and 
$k$-quasiconformal off it. A symmetrization procedure which goes back to
Reiner K\"uhnau \cite{Kuh} (used by Stanislav Smirnov in \cite{Sm})
permits us to remove the one-sidedness, and to identify the 
curves $\Gamma_\psi$ where $\psi\in\Sigma^{\langle k\rangle}$ as $k'$-quasicircles
(a $k'$-quasicircle is the image of a circle under a $k'$-quasiconformal map)
where 
\begin{equation}
k=\frac{2k'}{1+(k')^2}.
\label{eq-dimequal0}
\end{equation}
This allows us to say that
\begin{equation}
D_{M,1s}^+(k)=D_{M}^+(k'),\quad D_{H,1s}(k)=D_{H}(k'),
\label{eq-dimequal1}
\end{equation}
where the right-hand expressions are the optimal universal dimension bounds
without one-sidedness. A result of Kari Astala \cite{Ast} says that these 
dimension bounds are all the same:  
\begin{equation*}
D_{M,1s}^+(k)=D_{M}^+(k')=D_{H,1s}(k)=D_{H}(k').
\label{eq-dimequal2}
\end{equation*}

\begin{cor}
We have that $D_H(k')=D_{M}^+(k')\le 1+(k')^2+\Ordo((k')^3)$ as $k\to0^+$.
\label{cor-dim}
\end{cor}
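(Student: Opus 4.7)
The plan is to apply Theorem \ref{thm-main2} together with the classical Makarov--Pommerenke relation between the integral means spectrum and the upper Minkowski dimension of a quasicircle, followed by the symmetrization identity \eqref{eq-dimequal1} and Astala's equality $D_H(k') = D_M^+(k')$ recorded above.

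Fix $\psi \in \Sigma^{\langle k\rangle}$. Since $\psi' \ne 0$ on $\D_e$, the function $g := \log \psi'$ is single-valued and holomorphic on $\D_e$; for real $t$, the exponential type spectrum $\beta_g(t)$ defined by \eqref{eq-gspect1} coincides with the usual integral means spectrum of $\psi'$. By the Makarov--Pommerenke dimension formula for conformal maps with a quasiconformal extension (see Pommerenke \cite{Pombook}, Chapter~10), the upper Minkowski dimension of the quasicircle $\Gamma_\psi$ is bounded above by the smallest $t \ge 1$ at which $\beta_g(t) \le t-1$. Taking the supremum over $\psi \in \Sigma^{\langle k\rangle}$ and applying Theorem \ref{thm-main2} to dominate $\beta_g$ by the parabola $p(t) := \tfrac{1}{4} k^2 t^2 (1+7k)^2$, I obtain
\[
D_{M,1s}^+(k) \le t_*(k) := \min\{t \ge 1 : p(t) \le t-1\} = \frac{2}{1+\sqrt{1-k^2(1+7k)^2}},
\]
valid whenever $k(1+7k) < 1$. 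A Taylor expansion yields $t_*(k) = 1 + \tfrac{1}{4} k^2 (1+7k)^2 + \Ordo(k^4) = 1 + \tfrac{1}{4} k^2 + \Ordo(k^3)$.

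The substitution $k = 2k'/(1+(k')^2)$ gives $\tfrac{1}{4} k^2 = (k')^2 (1+(k')^2)^{-2} = (k')^2 + \Ordo((k')^4)$, and therefore
\[
D_{M,1s}^+(k) \le 1 + (k')^2 + \Ordo((k')^3) \quad \text{as} \quad k' \to 0^+.
\]
Combined with \eqref{eq-dimequal1} and Astala's identity $D_H(k') = D_M^+(k') = D_{M,1s}^+(k)$ recorded above, this yields the corollary. The main obstacle, as I see it, is the careful invocation of the Makarov--Pommerenke dimension formula in precisely the form used: one must verify that the $\beta_g$ adopted in \eqref{eq-gspect1} is the spectrum underlying the classical formula, and that an upper bound on $\beta_g(t)$ by a function which exceeds $t-1$ only on an initial interval is indeed enough to bound the upper Minkowski dimension by the left endpoint of that interval. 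Once this is in place, the remaining computations are elementary algebra and Taylor expansion.
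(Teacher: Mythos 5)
Your proposal is correct and follows essentially the same route as the paper: bound $\beta_{\log\psi'}$ by the parabola from Theorem \ref{thm-main2}, invoke Pommerenke's Minkowski dimension criterion (the paper cites \cite{Pom1}, phrased as the first root $t_k$ of $\frac14k^2t^2(1+7k)^2-t+1=0$ with negative $t$-derivative, which is the same as your ``smallest $t$ with $p(t)\le t-1$''), obtain $t_k=2/(1+\sqrt{1-k^2(1+7k)^2})=1+\frac{k^2}{4}+\Ordo(k^3)$, and transfer to $k'$ via \eqref{eq-dimequal0}, \eqref{eq-dimequal1} and Astala's equality. The Taylor expansions and the substitution $k=2k'/(1+(k')^2)$ match the paper's computation exactly.
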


This is weaker than Smir\-nov's \cite{Sm} bound on the Hausdorff dimension: 
$D_{H}(k')\le1+(k')^2$. 
For completeness, the proof is supplied in Subsection \ref{subsec-dimbound}.
At the moment of writing this note, it has been announced by Oleg Ivrii
that for small $k$ and moderate $|t|$, the estimate of Theorem 
\ref{thm-main2} can be strengthened, and that as a consequence, $D_H(k')\le
1+(1-\epsilon_0)(k')^2$ for small $k'$, where $\epsilon_0$ is a small positive
constant. The constant $\epsilon_0$ appears from the estimation of the 
asymptotic variance of the unit ball of $\Pop L^\infty(\D)$ in \cite{Hed2}.

%\subsection{}

\subsection{Acknowledgements} First, I would like to thank Donald
Marshall for sharing his ideas with me on the topic surrounding Brennan's 
conjecture. Then I would like to also thank Kari Astala, Anton Baranov, 
Oleg Ivrii, Antti Per\"al\"a, Istv\'an Prause, Eero Saksman, and Serguei 
Shimorin for several valuable conversations on topics related with the 
present paper. 

\section{The Bloch space and duality}
%and probabilistic modelling}

\subsection{The Bloch space and the Bloch seminorm}
\label{subsec-BlochBloch}
The \emph{Bloch space}, which is named after Andr\'e Bloch 
\cite{Blochbiog}, consists of the holomorphic functions $g:\D\to\C$ subject
to the seminorm boundedness condition
\begin{equation}
\|g\|_{\mathcal{B}(\D)}:=\sup_{z\in\D}(1-|z|^2)|g'(z)|<+\infty.
\label{eq-Blochnorm}
\end{equation}
If, for $\zeta\in\D$,  $\phi_\zeta$ denotes the involutive M\"obius 
automorphism of $\D$ given by
\[
\phi_\zeta(z):=\frac{\zeta-z}{1-\bar\zeta z},
\]
then 
\[
\|g\circ\phi_\zeta\|_{\mathcal{B}(\D)}=\|g\|_{\mathcal{B}(\D)},\qquad \zeta\in\D,
\]  
which is easily obtained from the equality
\[
\frac{1-|\phi_\zeta(z)|^2}{|\phi'_\zeta(z)|}=1-|z|^2.
\]
Together with the rotations, these M\"obius involutions $\phi_\zeta$ generate 
the full automorphism group, which makes the Bloch seminorm invariant under 
all M\"obius automorphisms of $\D$. The subspace 
\[
\mathcal{B}_0(\D):=\big\{g\in\mathcal{B}(\D):\,\,
\lim_{|z|\to1^-}(1-|z|^2)|g'(z)|=0\big\} 
\]
is called the \emph{little Bloch space}. We shall be concerned here with the
extremal growth properties of Bloch functions, where functions in the little 
Bloch space are seen to grow too slowly. In other words, the properties will
take place in the quotient space $\mathcal{B}(\D)/\mathcal{B}_0(\D)$. 
An immediate observation we can make at this point is the following.

\begin{lem}
If $g\in\mathcal{B}(\D)$ with $g(0)=0$, then $g$ enjoys the growth estimate
\begin{equation*}
|g(z)|\le\|g\|_{\mathcal{B}(\D)}\int_0^{|z|}\frac{\diff t}{1-t^2}
=\frac12\,\|g\|_{\mathcal{B}(\D)}\log\frac{1+|z|}{1-|z|},\qquad z\in\D.
%\label{eq-pointwise1}
\end{equation*}
\label{lem-pointwise1}
\end{lem}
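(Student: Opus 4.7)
The plan is to reduce the bound to a one-dimensional integral along the radial segment from the origin to $z$, and then apply the Bloch seminorm bound pointwise along that segment.

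First, I fix $z\in\D$ and write $z=|z|\e^{\imag\theta}$. Since $g$ is holomorphic on $\D$ and $g(0)=0$, the fundamental theorem of calculus applied to the $C^1$ curve $t\mapsto t\e^{\imag\theta}$ on $[0,|z|]$ gives
\[
g(z)=\int_0^{|z|} g'(t\e^{\imag\theta})\,\e^{\imag\theta}\,\diff t,
\]
so passing absolute values under the integral yields
\[
|g(z)|\le\int_0^{|z|}|g'(t\e^{\imag\theta})|\,\diff t.
\]

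Second, I invoke the Bloch seminorm. For each $t\in[0,|z|]$, the point $t\e^{\imag\theta}$ lies in $\D$ with modulus $t$, so by definition \eqref{eq-Blochnorm},
\[
|g'(t\e^{\imag\theta})|\le\frac{\|g\|_{\mathcal{B}(\D)}}{1-t^2}.
\]
Inserting this into the previous inequality produces the first bound stated in the lemma.

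Third, the closed form follows from the elementary partial fraction identity $\tfrac{1}{1-t^2}=\tfrac{1}{2}\bigl(\tfrac{1}{1-t}+\tfrac{1}{1+t}\bigr)$, which integrates to $\tfrac{1}{2}\log\tfrac{1+|z|}{1-|z|}$. There is no real obstacle here; the statement is a direct consequence of the definition of the Bloch seminorm and line integration along the radius, and the normalization $g(0)=0$ is exactly what is needed to start the integration at $t=0$.
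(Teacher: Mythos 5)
Your proof is correct and follows essentially the same route as the paper: integrate $g'$ along the radial segment from $0$ to $z$ and apply the Bloch seminorm pointwise. The paper states this more tersely via $g(z)=\int_0^{z}g'(\zeta)\,\diff\zeta$, but the argument is identical.
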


\begin{proof}
This is immediate from the integral formula
\[
g(z)=g(z)-g(0)=\int_0^{z}g'(\zeta)\diff\zeta,
\]
where the path is chosen to be the line segment connecting $0$ with $z$.
\end{proof}
%which is sharp pointwise.

\subsection{The Bloch space as the dual of the integrable 
quadratic differentials}
\label{subsec-PLinfty}

To a holomorphic quadratic differential $f(z)\diff z^2$ on the unit disk $\D$
we supply the norm
\[
\|f\|_{A^1(\D)}:=\int_\D|f(z)|\diff A(z),
\] 
and identify the holomorphic quadratic differentials with finite norm with 
the Bergman space $A^1(\D)$ (cf. \cite{Str}, p. 85). Here, slightly more 
generally, for $0<p<+\infty$,  we write $A^p(\D)$ for the Bergman space of all 
holomorphic functions $f:\D\to\C$ subject to the condition
\[
\|f\|_{A^p(\D)}:=\bigg(\int_\D|f(z)|^p\diff A(z)\bigg)^{1/p}<+\infty.
\] 
%Here, $p$ is assumed to be positive and real. 
Holomorphic quadratic differentials appear naturally in the context of 
Teichm\"uller theory.
If $\phi:\D\to\D$ is a M\"obius automorphism, while $f\in A^1(\D)$ and
$\mu\in L^\infty(\D)$ are given, then
\begin{equation}
\langle f,\mu\rangle_\D=\int_\D f\bar\mu\diff A=\int_\D
(f\circ\phi)\,(\bar\mu\circ\phi)\,|\phi'|^2\diff A
=\langle f_\phi,\mu_\phi\rangle_\D,
\end{equation}
where
\begin{equation}
f_\phi:=(\phi')^2f\circ\phi,\qquad \mu_\phi:=\frac{\phi'}{\bar\phi'}
\mu\circ\phi,
\end{equation}
so that while $f$ transforms as a quadratic differential, on the dual side
$\mu$ transforms as a $\diff z/\diff\bar z$-form. That is, $\mu$ reverses 
the complex structure. In fact, it acts to send a $(0,1)$-differential to a 
$(1,0)$-differential:
\[
h(z)\diff\bar z\,\, \mapsto\,\,\mu(z)h(z)\diff z.
\]  
For this reason, it will not come as a great surprise to us that such dual 
elements $\mu\in L^\infty(\D)$ are related with Beltrami equations and 
quasiconformal theory. 
Not all $\mu\in L^\infty(\D)$ give rise to nontrivial linear functionals on 
the space $A^1(\D)$. 
%Indeed, the null action space
%(or annihiliator)
%\begin{equation}
%N^\infty(\D):=\big\{\mu\in L^\infty(\D):\,\,
%\langle f,\mu\rangle_\D=0\,\,\,\text{for all}\,\,\,f\in A^1(\D)\big\},
%\end{equation}
%is infinite-dimensional. The space $N^\infty(\D)$ appears e.g. in the 
%context of quasiconformal deformation in (\cite{Ahl}, Lemma 1).
The nontrivial part of $\mu\in L^\infty(\D)$ may be represented by its Bergman 
projection $\Pop\mu$, given by \eqref{eq-Bergproj1.1}. On the
other hand, trivial such $\mu\in L^\infty(\D)$ appear in the context of 
quasiconformal deformation in, e.g., \cite{Ahl}, Lemma 1.
%From basic Functional Analysis, we know that the 
%dual space of $A^1(\D)$ with respect to the sesquilinear form $\langle\cdot,
%\cdot\rangle_\D$ may be identified isometrically with the quotient space 
%$L^\infty(\D)/N^\infty(\D)$. If 
%\[
%\Pop \mu(z):=\int_\D\frac{\mu(w)}{(1-z\bar w)^2}\,\diff A(w),\qquad z\in\D.
%\]
%denotes the \emph{Bergman projection}, which 
It is well-known that $\Pop$ acts boundedly on $L^p(\D)$ for each $p$ with
$1<p<+\infty$, and that $\Pop$ maps $L^\infty(\D)$ onto the Bloch space 
$\mathcal{B}(\D)$ (this result is from Coifman, Rochberg, and Weiss 
\cite{CRW}; see also, e.g., the book \cite{HKZ}).
%then $\Pop$ trivializes the null action space, that is, 
%$\Pop N^\infty(\D)=\{0\}$, and
%with slightly higher precision,
%\begin{equation*}
%N^\infty(\D)=\{\mu\in L^\infty(\D):\,\,
%\Pop\mu=0\}.
%\end{equation*}
%It is now apparent that $\Pop$ defines an isometric isomorphism
%\begin{equation*}
%\Pop:\,L^\infty(\D)/N^\infty(\D)\to\Pop L^\infty(\D),
%\end{equation*}
This suggests that we should equip the space $\mathcal{B}(\D)\cong 
\Pop L^\infty(\D)$ with the alternative norm 
%the canonical norm
\begin{equation}
\|g\|_{\Pop L^\infty(\D)}:=\inf\big\{\|\mu\|_{L^\infty(\D)}:\,\,
\mu\in L^\infty(\D)\,\,\,\text{ and }\,\,\, g=\Pop\mu\big\}.
\label{eq-Blochnorm1.2}
\end{equation}
When we do so, we write $\Pop L^\infty(\D)$ for the Bloch space. As such, 
$\Pop L^\infty(\D)$ is isometrically isomorphic with the dual space of 
$A^1(\D)$, with respect to the dual pairing $\langle\cdot,\cdot\rangle_\D$,
which needs to be understood in a generalized sense. The reason is that for 
$g\in\Pop L^\infty(\D)$ and $f\in A^1(\D)$, it might happen that $f\bar g\notin
L^1(\D)$, which would leave the dual action $\langle f,g\rangle_\D$ undefined.  
To remedy this defect, we consider the dilates $f_r(z):=f(rz)$ for $0<r<1$,
so that if $g=\Pop\mu$ where $\mu\in L^\infty(\D)$, we may \emph{define}
\[
\langle f,g\rangle_\D:=\lim_{r\to1^-}\langle f_r,g\rangle_\D,
\]
because then 
\begin{equation}
\langle f,g\rangle_\D=\langle f,\Pop\mu\rangle_\D=\langle f,\mu\rangle_\D,
\label{eq-duality1.001}
\end{equation}
as we see from the following calculation, which also justifies the 
existence of the limit:
\[
\langle f,g\rangle_\D:=\lim_{r\to1^-}\langle f_r,g\rangle_\D=
\lim_{r\to1^-}\langle f_r,\Pop\mu\rangle_\D=\lim_{r\to1^-}\langle\Pop f_r,
\mu\rangle_\D=\lim_{r\to1^-}\langle f_r,\mu\rangle_\D=\langle f,\mu\rangle_\D.
\]
Here, we use that the Bergman projection $\Pop$ is self-adjoint 
on $L^2(\D)$ and preserves $A^2(\D)$, and that we have the norm convergence 
$f_r\to f$ as $r\to1^-$ in the space $A^1(\D)$. 
In conclusion, \emph{we have identified the dual space of $A^1(\D)$ with the
%quotient space $L^\infty(\D)/N^\infty(\D)$ as well as with 
space $\Pop L^\infty(\D)$, isometrically and isomorphically, where the dual 
action is given by the sesquilinear form $\langle\cdot,\cdot\rangle_\D$}. 

%The fact that as a linear space, $\Pop L^\infty(\D)$ equals the Bloch space 
%$\mathcal{B}(\D)$, is from the work of Coifman, Rochberg, and Weiss 
%\cite{CRW}. However, the endowed norm is not the same as the standard seminorm 
%\eqref{eq-Blochnorm}. 
Recently, Antti Per\"al\"a \cite{Per} obtained the following estimate.

\begin{lem}
{\rm(Per\"al\"a)}
We have the inequality
\begin{equation*}
\|\Pop\mu\|_{\mathcal{B}(\D)}\le\frac{8}{\pi}\|\mu\|_{L^\infty(\D)},\qquad 
\mu\in L^\infty(\D),
%\label{eq-Perala}
\end{equation*}
where the constant $8/\pi$ is best possible. 
\label{lem-Perala}
\end{lem}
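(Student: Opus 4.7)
The plan is to compute the operator norm $\|\Pop\|_{L^\infty(\D)\to\mathcal{B}(\D)}$ exactly by dualizing on the derivative. Differentiating under the integral sign gives
\[
(\Pop\mu)'(z)=2\int_\D\frac{\bar w\,\mu(w)}{(1-z\bar w)^3}\,\diff A(w),\qquad z\in\D,
\]
so that for each fixed $z\in\D$, the linear functional $\mu\mapsto(1-|z|^2)(\Pop\mu)'(z)$ on $L^\infty(\D)$ has norm
\[
H(z):=2(1-|z|^2)\int_\D\frac{|w|}{|1-z\bar w|^3}\,\diff A(w),
\]
attained by choosing a unimodular $\mu$ whose argument cancels that of $\bar w/(1-z\bar w)^3$. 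Consequently $\|\Pop\|_{L^\infty\to\mathcal{B}}=\sup_{z\in\D}H(z)$, and the entire problem reduces to evaluating this supremum.

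By rotational invariance, $H(z)$ depends only on $|z|$, so we may take $z=r\in[0,1)$. Expanding the kernel as $|1-rw|^{-3}=(1-rw)^{-3/2}(1-r\bar w)^{-3/2}$ via the binomial series $(1-x)^{-3/2}=\sum_n\bigl((3/2)_n/n!\bigr)x^n$, then integrating term by term in polar coordinates (only the diagonal $n=m$ terms survive by orthogonality of $e^{i(n-m)\theta}$), one finds
\[
H(r)=2(1-r^2)\sum_{n\ge0}a_n r^{2n},\qquad a_n:=\frac{\bigl((3/2)_n\bigr)^2}{(n!)^2\,(n+3/2)}.
\]

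The crux is the double claim $a_n\nearrow 4/\pi$. Monotonicity follows from the ratio $a_n/a_{n-1}=(n+1/2)^3/\bigl(n^2(n+3/2)\bigr)$ together with the elementary polynomial identity $(n+1/2)^3-n^2(n+3/2)=\tfrac{3}{4}n+\tfrac{1}{8}>0$. The limit follows from Stirling's formula: $(3/2)_n/n!=\Gamma(n+3/2)/(\Gamma(3/2)\Gamma(n+1))\sim n^{1/2}/\Gamma(3/2)$, whence $a_n\to 1/\Gamma(3/2)^2=4/\pi$. Consequently $a_n\le 4/\pi$ for every $n$, yielding the clean bound
\[
H(r)\le 2(1-r^2)\cdot\frac{4/\pi}{1-r^2}=\frac{8}{\pi}.
\]
Letting $r\to 1^-$ and invoking the telescoping identity $H(r)/2=a_0+\sum_{n\ge1}(a_n-a_{n-1})r^{2n}\to\lim_n a_n=4/\pi$ (monotone convergence), we get $\sup_r H(r)=8/\pi$ exactly. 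Sharpness is witnessed by the unimodular extremizers $\mu_r$ described above as $r\to 1^-$, showing the constant $8/\pi$ cannot be improved.

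The main technical obstacle is the monotonicity of $a_n$; fortunately it reduces to the elementary polynomial comparison above. The only other subtle point is identifying the right asymptotic $a_n\to 4/\pi$, which pins down the constant: any weaker estimate (such as merely bounding $\sup_n a_n$) would yield only a worse numerical constant than the sharp $8/\pi$.
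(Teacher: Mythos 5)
Your proof is correct, and it fills a gap that the paper itself leaves open: the paper states this lemma with an attribution to Per\"al\"a's paper and cites it without giving any proof. So there is no in-paper argument to compare against. Your approach — identify the operator norm $\|\Pop\|_{L^\infty\to\mathcal{B}}$ with $\sup_z H(z)$ where $H(z)=2(1-|z|^2)\int_\D|w|\,|1-z\bar w|^{-3}\,\diff A(w)$, expand the kernel into a hypergeometric-type power series, check that the coefficients $a_n=((3/2)_n/n!)^2/(n+3/2)$ increase monotonically to $1/\Gamma(3/2)^2=4/\pi$, and then use the geometric series together with an Abel-limit argument for sharpness — is exactly the natural route for this sort of Forelli--Rudin computation, and it is essentially what appears in Per\"al\"a's article. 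Each step is airtight: the interchange of suprema over $z$ and over the unit ball of $L^\infty$ is legitimate because the inner supremum is attained by a unimodular symbol; the angular orthogonality kills the off-diagonal terms; the polynomial identity $(n+\tfrac12)^3-n^2(n+\tfrac32)=\tfrac34 n+\tfrac18$ gives monotonicity cleanly; and the telescoping/monotone-convergence argument correctly yields $H(r)\to 8/\pi$ as $r\to1^-$, establishing both the bound and its optimality. The one thing worth saying explicitly (though you clearly understand it) is that the sup $\sup_r H(r)=8/\pi$ is not attained for any fixed $r<1$, so the constant is sharp in the infimum sense but there is no single extremal $\mu$; your phrase about extremizers ``as $r\to1^-$'' conveys this correctly.
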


In the other direction, a less precise argument (see, e.g., Proposition 
\ref{prop-2.1.3} below) shows that 
a function $g\in\mathcal{B}(\D)$ with $\|g\|_{\mathcal{B}(\D)}\le1$ can be 
written in the form $g=\Pop\nu_g+G$, where $\nu_g\in L^\infty(\D)$ and 
$G\in H^\infty(\D)$, with the (semi)norm bounds $\|\nu_g\|_{L^\infty(\D)}\le1$ and
$\|G\|_{H^\infty(\D)}\le |g(0)|+6$.  
As the sharpness of
Per\"al\"a's estimate also comes from boundary effects, it would appear that 
modulo bounded terms, the unit ball of $\mathcal{B}(\D)$ 
can be mapped into the unit ball of $\Pop L^\infty(\D)$, whereas the unit ball
of $\Pop L^\infty(\D)$ is mapped into $\frac8\pi$ times the unit ball of 
$\mathcal{B}(\D)$. 
%A statement with a little more precision is offered in
%Lemma \ref{thm-main2}. 

%
%so essentially there would appear to be a gap of the size $8/\pi$ between
%the two norms.
%
% under the 
%\emph{Bergman projection} 
%\[
%\Pop f(z):=\int_\D\frac{f(w)}{(1-z\bar w)^2}\,\diff A(w),\qquad z\in\D,
%\]
%which is well-defined if $f\in L^1(\D)$. This perspective is of importance
%in Teichm\"uller theory. 
%
%There is another realization of the Bloch space

As for pointwise bounds, the analogue of Lemma \ref{lem-pointwise1} for
$\Pop L^\infty(\D)$ runs as follows.

\begin{lem}
Suppose that $\mu\in L^\infty(\D)$. Then 
\[
|\Pop\mu(z)|\le\|\mu\|_{L^\infty(\D)}\,\frac{1}{|z|^2}\log\frac{1}{1-|z|^2},
\qquad z\in\D.
\]
\label{lem-ptwise2}
\end{lem}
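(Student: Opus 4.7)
The plan is a direct estimation: bound $|\Pop\mu|$ by pulling the essential supremum of $\mu$ out of the integral, and then evaluate the resulting kernel integral exactly. Concretely, from the definition \eqref{eq-Bergproj1.1} and the triangle inequality,
\[
|\Pop\mu(z)|\le\|\mu\|_{L^\infty(\D)}\int_\D\frac{\diff A(w)}{|1-z\bar w|^2},
\]
so the lemma reduces to the identity
\[
\int_\D\frac{\diff A(w)}{|1-z\bar w|^2}=\frac{1}{|z|^2}\log\frac{1}{1-|z|^2},\qquad z\in\D\setminus\{0\}.
\]

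To verify this, I would pass to polar coordinates $w=\rho\e^{\imag\theta}$ and exchange the order of integration. For each fixed $\rho\in(0,1)$, the angular integral is a standard Poisson-type integral: for $|\alpha|<1$ one has
\[
\int_0^{2\pi}\frac{\diff\theta}{2\pi}\,\frac{1}{|1-\alpha\e^{-\imag\theta}|^2}=\frac{1}{1-|\alpha|^2}.
\]
Applying this with $\alpha=z\rho$ reduces the problem to the elementary radial integral
\[
\int_0^1\frac{2\rho\,\diff\rho}{1-|z|^2\rho^2}=\frac{1}{|z|^2}\log\frac{1}{1-|z|^2},
\]
which yields the claimed identity. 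Combining with the first display gives the estimate.

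There is no real obstacle here; the only subtlety is the singularity of the bound at $z=0$, which is only apparent. Since $\log\frac{1}{1-|z|^2}=|z|^2+\Ordo(|z|^4)$ near the origin, the right-hand side is bounded (by $\|\mu\|_{L^\infty(\D)}$ in the limit), consistent with the obvious pointwise bound $|\Pop\mu(0)|\le\|\mu\|_{L^\infty(\D)}$.
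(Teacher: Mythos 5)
Your proof is correct and follows exactly the same route the paper takes: pull out $\|\mu\|_{L^\infty(\D)}$ and evaluate $\int_\D|1-z\bar w|^{-2}\,\diff A(w)$. You simply spell out the evaluation of the kernel integral, which the paper leaves implicit.
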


\begin{proof}
This follows from the estimate
\[
|\Pop\mu(z)|\le\int_\D\frac{|\mu(w)|}{|1-z\bar w|^2}\diff A(w)\le
\|\mu\|_{L^\infty(\D)}\int_\D\frac{1}{|1-z\bar w|^2}\diff A(w),\qquad z\in\D,
\]
by evaluation of the right-hand side integral.
\end{proof}

\begin{rem}
Both estimates of Lemmata \ref{lem-pointwise1} and \ref{lem-ptwise2} are
optimal. Moreover, since
\[
\lim_{r\to1^-}\frac{\frac{1}{r^2}\log\frac{1}{1-r^2}}{\frac12\log\frac{1+r}{1-r}}
=2,
\]
the permitted boundary growth is about twice as big for an element of the
unit ball of $\Pop L^\infty(\D)$ as for the unit ball of $\mathcal{B}(\D)$.
\end{rem}

%\end{document}
\section{Two notions of asymptotic variance and Marshall's 
estimate}
\label{sec-notionsvar}

%\end{document}
\subsection{Gaussian modelling and notions of asymptotic variance}
\label{subsec-Gaussmod} 
The standard normal rotationally invariant complex Gaussian distribution
$N_\C(0,1)$ has the probability measure $\e^{-|z|^2}\diff A(z)$ in the plane 
$\C$.  
More generally, we write $X\sim N_\C(m,\sigma^2)$ with mean  $m\in\C$ and 
rotationally invariant standard deviation $\sigma>0$ if 
\[
\frac{X-m}{\sigma}\sim N_\C(0,1). 
\] 
If $X\sim N_\C(0,\sigma^2)$, we may recover \emph{the variance} 
$\mathrm{var}\,X=\sigma^2$
from the formula
\begin{equation}
\mathrm{var}\,X:=\expect|X|^2,
\end{equation}
where $\expect$ stands for the expectation operation.
%of the random variable in question.
But we may also recover the variance from the tail distribution behavior as 
follows: $\sigma^2=\mathrm{tvar}\,X$, where $\mathrm{tvar}\,X$ is the 
\emph{tail variance}
\begin{equation}
\mathrm{tvar}\,X=\sigma^2=\inf\big\{\tau\in\R_+:
\,\,\expect\e^{|X|^2/\tau}<+\infty\big\}.
\end{equation}
Indeed, we see by direct inspection that
\begin{equation}
\expect\e^{|X|^2/\tau}=\sigma^{-2}\int_\C\e^{|z|^2/\tau}\e^{-|z|^2/\sigma^2}\diff A(z)=
\frac{\tau}{\tau-\sigma^2},\qquad \sigma^2<\tau<+\infty,
\label{eq-unifcontr1}
\end{equation}
which explodes as $\tau\to\sigma^2$. We might remark at this juncture that 
tail aspects of Gaussian densities are critical for the uncertainty principles
for Fourier transform pairs considered by Hardy and Beurling (see 
\cite{Har}, \cite{Horm}, \cite{Hed1}). 

%\end{document}

Makarov (see \cite{Mak1}, \cite{Mak2}, \cite{Mak3}) had the remarkable insight 
to model the boundary behavior of Bloch functions by Gaussian processes
(for a more directly probabilistic perspective, see Lyons' paper \cite{Lyo}).
For a typical Bloch function $g\in\mathcal{B}(\D)$ with $g(0)=0$ and given 
an $r$ with $0<r<1$, he thought of the dilates $g_r(\zeta)=g(r\zeta)$, 
for $\zeta\in\Te$, as an approximately rotationally invariant Gaussian 
stochastic variable, which in its turn evolves stochastically in time, where 
we think of time as related to the dilation parameter $r$ via 
\[
t=\log\frac{1}{1-r^2},\qquad \diff t=\frac{2r\diff r}{1-r^2}.
\]
So, taking this into account, we normalize the dilate, and let $X_r=X_r[g]$
be the function
\[
X_r(\zeta):=\frac{g(r\zeta)}{\sqrt{\log\frac{1}{1-r^2}}},\qquad \zeta\in\Te,
\,\,\,0<r<1,
\]
and since 
\[
\expect X_r=\int_\Te X_r(\zeta)\diff s(\zeta)=
\frac{g(0)}{\sqrt{\log\frac{1}{1-r^2}}}=0,
\]
we may calculate the variance from the formula 
\[
\mathrm{var}\, X_r[g]=\expect|X_r|^2=
\frac{\int_\Te|g(r\zeta)|^2\diff s(\zeta)}{\log\frac{1}{1-r^2}}. 
\]
The tail variance has no direct analogue, as the function $g_r$ is bounded
for fixed $r$ (see Lemma \eqref{lem-pointwise1}). However, in view of 
the uniform control observed in \eqref{eq-unifcontr1}, we can make sense of it 
asymptotically as $r\to1^-$. Following Makarov and Curtis McMullen \cite{McM},
we say that the Bloch function $g$ has the \emph{asymptotic variance}
\begin{equation}
\mathrm{avar}\,g:=\limsup_{r\to1^-}\expect|X_r|^2=
\limsup_{r\to1^-}
\frac{\int_\Te|g(r\zeta)|^2\diff s(\zeta)}{\log\frac{1}{1-r^2}},
\label{eq-asvar1}
\end{equation}
and the \emph{asymptotic tail variance}
\begin{equation}
\mathrm{atvar}\,g:=
\inf\big\{\tau\in\R_+:
\,\,\limsup_{r\to1^-}\expect\,\e^{|X_r|^2/\tau}<+\infty\big\},
\label{eq-astvar1}
\end{equation}
where the indicated expectation is given by
\begin{equation}
\expect\,\e^{|X_r|^2/\tau}:=\int_\Te\exp\Bigg(\frac{|g(r\zeta)|^2}
{\tau\log\frac{1}{1-r^2}}\Bigg)\diff s(\zeta).
\label{eq-astvar2}
\end{equation}
These asymptotic formulae apply also in the case when $g(0)\ne0$. 
%With a slight abuse of language, we shall say that the Bloch function $g$
%has the asymptotic variance given by \eqref{eq-asvar1}, and the tail variance
%given by \eqref{eq-astvar1}. 

%\end{document}

It is of interest to extend these notions of asymptotic variances to 
the setting of subsets $\mathcal{G}\subset\mathcal{B}(\D)$. To this end, 
we let the \emph{asymptotic (tail) variances} of $\mathcal{G}$ be
the supremum of the individual asymptotic (tail) variances:
\begin{equation}
\mathrm{avar}\,\mathcal{G}:=\sup_{g\in\mathcal{G}}
\,\mathrm{avar}\,g,\qquad \mathrm{atvar}\,\mathcal{G}=\sup_{g\in\mathcal{G}}
\,\mathrm{atvar}\,g,
%\frac{\int_\Te|g(r\zeta)|^2\diff s(\zeta)}{\log\frac{1}{1-r^2}},
\label{eq-asyvar1.1}
\end{equation}
but we also need uniform versions. We let the
\emph{uniform asymptotic variance} of $\mathcal{G}$ be the limit
\begin{equation}
\mathrm{avar}_u\,\mathcal{G}=\limsup_{r\to1^-}\sup_{g\in\mathcal{G}}\expect|X_r[g]|^2=
\limsup_{r\to1^-}\sup_{g\in\mathcal{G}}
\frac{\int_\Te|g(r\zeta)|^2\diff s(\zeta)}{\log\frac{1}{1-r^2}},
\label{eq-asvar1.1}
\end{equation}
and, analogously, the \emph{uniform asymptotic tail variance} of $\mathcal{G}$ 
is defined to be 
\begin{equation}
\mathrm{atvar}_u\,\mathcal{G}:=
\inf\Bigg\{\tau\in\R_+:
\,\,\limsup_{r\to1^-}\sup_{g\in\mathcal{G}}\int_\Te\exp\Bigg(\frac{|g(r\zeta)|^2}
{\tau\log\frac{1}{1-r^2}}\Bigg)\diff s(\zeta)<+\infty\Bigg\}.
\label{eq-astvar1.1}
\end{equation}
The way things are set up, we automatically have the inequalities
\[
\mathrm{avar}\,\mathcal{G}\le\mathrm{avar}_u\,\mathcal{G},\qquad
\mathrm{atvar}\,\mathcal{G}\le\mathrm{atvar}_u\,\mathcal{G},
\]
but we should expect that quite often, as a result of compactness, the 
above inequalities are equalities.
Note that for $F\in H^\infty(\D)$, we have that 
\[
\mathrm{avar}_u\,F\mathcal{G}\le\|F\|_{H^\infty(\D)}^2\mathrm{avar}_u\,
\mathcal{G},\qquad
\mathrm{atvar}_u\,F\mathcal{G}\le\|F\|_{H^\infty(\D)}^2\mathrm{atvar}_u\,
\mathcal{G},
\]
where the uniformity may be removed (by considering the functions 
individually).

%where the indicated expectation is given by
%\begin{equation}
%\expect\,\e^{|X_r|^2/\tau}:=\int_\Te\exp\Bigg\{\frac{|g(r\zeta)|^2}
%{\tau\log\frac{1}{1-r^2}}\Bigg\}\diff s(\zeta).
%\label{eq-astvar2}
%\end{equation}

%\end{document}
\subsection{Metric properties of the notions of asymptotic 
variance}
%We should analyze the stability of the asymptotic variance and the asymptotic
%tail variance under a bounded perturbation.
Let us consider the expressions, for $g\in\mathcal{B}(\D)$, 
\begin{equation}
\|g\|_{\mathrm{av}}:=(\mathrm{avar}\,g)^{1/2},\quad 
\|g\|_{\mathrm{atv}}:=(\mathrm{atvar}\,g)^{1/2}.
\label{eq-seminorm1}
\end{equation}

\begin{prop}
The functionals $\|\cdot\|_{\mathrm{av}}$ and $\|\cdot\|_{\mathrm{atv}}$ given by
\eqref{eq-seminorm1} are seminorms on $\mathcal{B}(\D)$. 
% 
%Suppose that $g_1,g_2\in\mathcal{B}(\D)$ and that $g_1-g_2\in H^\infty(\D)$. 
%Then $\mathrm{avar}\,g_1=\mathrm{avar}\,g_2$ and 
%$\mathrm{atvar}\,g_1=\mathrm{atvar}\,g_2$. 
\label{prop-stability1}
\end{prop}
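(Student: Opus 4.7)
The proof proposal: verify the three seminorm axioms (nonnegativity, absolute homogeneity, triangle inequality) for each of the two functionals. Nonnegativity is trivial in both cases, as is homogeneity: since $X_r[\lambda g] = \lambda X_r[g]$, we have $\mathrm{avar}(\lambda g) = |\lambda|^2\mathrm{avar}(g)$; and for the tail variance, the substitution $\tau = |\lambda|^2 \sigma$ inside the infimum in \eqref{eq-astvar1} immediately gives $\mathrm{atvar}(\lambda g) = |\lambda|^2\mathrm{atvar}(g)$. The only substantive point is the triangle inequality, which I handle separately for each seminorm.

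For $\|\cdot\|_{\mathrm{av}}$, the map $g \mapsto X_r[g]$ is linear, so $X_r[f+g] = X_r[f] + X_r[g]$. Minkowski's inequality in $L^2(\Te,\diff s)$ yields $\|X_r[f+g]\|_{L^2(\Te)} \leq \|X_r[f]\|_{L^2(\Te)} + \|X_r[g]\|_{L^2(\Te)}$, and passing to $\limsup_{r\to 1^-}$ (which is subadditive) delivers $\|f+g\|_{\mathrm{av}} \leq \|f\|_{\mathrm{av}} + \|g\|_{\mathrm{av}}$.

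For $\|\cdot\|_{\mathrm{atv}}$, the main work is a convex-combination-plus-H\"older trick. Fix $\tau > \mathrm{atvar}(f)$ and $\rho > \mathrm{atvar}(g)$. The Cauchy--Young inequality $2|f||g| \leq \sqrt{\rho/\tau}\,|f|^2 + \sqrt{\tau/\rho}\,|g|^2$ combined with $|f+g|^2 \leq (|f|+|g|)^2$ gives, after dividing by $(\sqrt{\tau}+\sqrt{\rho})^2$,
\[
\frac{|f+g|^2}{(\sqrt{\tau}+\sqrt{\rho})^2} \,\leq\, \alpha\,\frac{|f|^2}{\tau} + \beta\,\frac{|g|^2}{\rho},
\qquad \alpha:=\frac{\sqrt{\tau}}{\sqrt{\tau}+\sqrt{\rho}},\ \beta:=\frac{\sqrt{\rho}}{\sqrt{\tau}+\sqrt{\rho}},
\]
with $\alpha+\beta=1$. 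Exponentiating (evaluated at the normalized dilate $X_r$) and applying H\"older's inequality with conjugate exponents $1/\alpha$ and $1/\beta$, one obtains
\[
\int_\Te \exp\!\biggl(\frac{|X_r[f+g]|^2}{(\sqrt{\tau}+\sqrt{\rho})^2}\biggr)\diff s
\,\leq\,
\biggl(\int_\Te \exp\!\bigl(|X_r[f]|^2/\tau\bigr)\diff s\biggr)^{\!\alpha}
\biggl(\int_\Te \exp\!\bigl(|X_r[g]|^2/\rho\bigr)\diff s\biggr)^{\!\beta}.
\]
By the choice of $\tau,\rho$, both right-hand factors stay bounded as $r\to 1^-$, hence so does the left-hand side; thus $\mathrm{atvar}(f+g) \leq (\sqrt{\tau}+\sqrt{\rho})^2$. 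Letting $\tau \downarrow \mathrm{atvar}(f)$ and $\rho \downarrow \mathrm{atvar}(g)$ gives $\|f+g\|_{\mathrm{atv}} \leq \|f\|_{\mathrm{atv}}+\|g\|_{\mathrm{atv}}$.

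The main obstacle is precisely the need to linearize $|f+g|^2$ against $|f|^2/\tau$ and $|g|^2/\rho$ with coefficients summing to one, so that the nonlinear exponential averages factor correctly under H\"older. Once that convex split is arranged with the weights $\alpha,\beta$ above, the rest is bookkeeping.
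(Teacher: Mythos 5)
Your proof is correct and essentially the same as the paper's: the crux, subadditivity of $\|\cdot\|_{\mathrm{atv}}$, is handled by the identical pointwise convex split followed by H\"older's inequality, where your Cauchy--Young derivation reproduces exactly the paper's inequality $|\xi+\eta|^2\le(1+\alpha)|\xi|^2+(1+\alpha^{-1})|\eta|^2$ under the parametrization $\alpha=\sqrt{\rho/\tau}$, with the same H\"older exponents. Your treatment of $\|\cdot\|_{\mathrm{av}}$ via Minkowski in $L^2(\Te)$ and subadditivity of $\limsup$ is marginally more direct than the paper's reuse of the $(1+\alpha)$-trick, but it arrives at the same estimate.
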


\begin{rem}
In particular, for two functions $g_1,g_2\in\mathcal{B}(\D)$, we have the 
following:
(i) if $\mathrm{avar}(g_1-g_2)=0$, then 
$\mathrm{avar}\,g_1=\mathrm{avar}\,g_2$, and (ii) if 
$\mathrm{atvar}(g_1-g_2)=0$, 
then $\mathrm{atvar}\,g_1=\mathrm{atvar}\,g_2$.  
Examples of functions $g\in\mathcal{B}(\D)$ with
$\mathrm{avar}\,g=\mathrm{atvar}\,g=0$ include elements of $H^\infty(\D)$ 
as well as elements of the little Bloch space $\mathcal{B}_0(\D)$.  
\end{rem}

\begin{proof}[Proof of Proposition \ref{prop-stability1}]
The homogeneity property of the norm follows from the corresponding
property of the two asymptotic variances, which are easily verified by 
inspection:
\[
\mathrm{avar}\,\lambda g=|\lambda|^2\mathrm{avar}\,g,\quad
\mathrm{atvar}\,\lambda g=|\lambda|^2\mathrm{atvar}\,g,
\]
for  $g\in\mathcal{B}(\D)$. To obtain the remaining property (subadditivity),
that is,
\begin{equation}
\|g+h\|_{\mathrm{av}}\le\|g\|_{\mathrm{av}}+\|h\|_{\mathrm{av}},\quad
\|g+h\|_{\mathrm{atv}}\le\|g\|_{\mathrm{atv}}+\|h\|_{\mathrm{atv}},
\label{eq-subadd1.005}
\end{equation}
we begin with an elementary estimate.
For complex numbers $\xi,\eta\in\C$, and a positive real $\alpha$, we
have the estimate
\begin{equation}
|\xi+\eta|^2\le (1+\alpha)|\xi|^2+\bigg(1+\frac{1}{\alpha}\bigg)|\eta|^2.
\label{eq-trivial1}
\end{equation}
%where $\xi,\eta\in\C$, and $0<\alpha<+\infty$.
It follows from \eqref{eq-trivial1} that
%We write $h:=g_2-g_1\in H^\infty(\D)$, and observe that by \eqref{eq-trivial1},
%it follows that 
%\[
%|g_2(r\zeta)|^2\le (1+\epsilon)|g_1(r\zeta)|^2+
%\bigg(1+\frac{1}{\epsilon}\bigg)|h(r\zeta)|^2\le (1+\epsilon)|g_1(r\zeta)|^2+
%\bigg(1+\frac{1}{\epsilon}\bigg)\|h\|_{H^\infty(\D)}^2,
%\]
%and hence
\begin{equation}
\frac{|(g+h)(r\zeta)|^2}{\log\frac{1}{1-r^2}}
\le (1+\alpha)\frac{|g(r\zeta)|^2}{\log\frac{1}{1-r^2}}+
\bigg(1+\frac{1}{\alpha}\bigg)
\frac{|h(r\zeta)|^2}{\log\frac{1}{1-r^2}}.
\label{eq-relation1.01}
\end{equation}
If $\sigma_1,\sigma_2$ are positive reals such that 
$\mathrm{avar}\,g<\sigma_1^2$ and $\mathrm{avar}\,h<\sigma_2^2$, we integrate
with respect to $\diff s(\zeta)$ along $\Te$ in \eqref{eq-relation1.01}, 
and take the limsup as $r\to1^-$. The result is that
\[
\mathrm{avar}(g+h)\le
(1+\alpha)\sigma_1^2+\bigg(1+\frac{1}{\alpha}\bigg)
\sigma_2^2=(\sigma_1+\sigma_2)^2,
\] 
where in the last step, we made the optimal choice 
$\alpha=\sigma_2/\sigma_1$. By minimizing over
$\sigma_1,\sigma_2$, the first subadditivity in \eqref{eq-subadd1.005}
follows. 

Next, we let $\sigma_3,\sigma_4$ be positive reals with 
$\mathrm{atvar}\,g<\sigma_3^2$ and $\mathrm{atvar}\,h<\sigma_4^2$, 
and observe that 
by \eqref{eq-relation1.01},
\begin{equation*}
\exp\Bigg\{\frac{|(g+h)(r\zeta)|^2}
{(\sigma_3+\sigma_4)^2\log\frac{1}{1-r^2}}\Bigg\}
\le \exp\Bigg\{\frac{(1+\alpha)|g(r\zeta)|^2}
{(\sigma_3+\sigma_4)^2\log\frac{1}{1-r^2}}\Bigg\}
\exp\Bigg\{\frac{(1+\frac1{\alpha})|h(r\zeta)|^2}
{(\sigma_3+\sigma_4)^2\log\frac{1}{1-r^2}}\Bigg\},
%\label{eq-relation1.0101}
\end{equation*}
so that by H\"older's inequality,
\begin{multline*}
\int_\Te\exp\Bigg\{\frac{|(g+h)(r\zeta)|^2}
{(\sigma_3+\sigma_4)^2\log\frac{1}{1-r^2}}\Bigg\}\diff s(\zeta)
\le\Bigg(\int_\Te\exp\Bigg\{\frac{(1+\alpha)p|g(r\zeta)|^2}
{(\sigma_3+\sigma_4)^2\log\frac{1}{1-r^2}}\Bigg\}\diff s(\zeta)\Bigg)^{1/p}
\\
\times\Bigg(\int_\Te\exp\Bigg\{\frac{(1+\frac1{\alpha})p'|h(r\zeta)|^2}
{(\sigma_3+\sigma_4)^2\log\frac{1}{1-r^2}}\diff s(\zeta)\Bigg\}\Bigg)^{1/p'},
%\\
%=,
%\label{eq-relation1.0101}
\end{multline*}
where $p,p'$ are dual exponents. The choice $\alpha=\sigma_4/\sigma_3$,
$p=1+\alpha$, and $p'=1+\frac{1}{\alpha}$ gives 
\[
\frac{(1+\alpha)p}
{(\sigma_3+\sigma_4)^2}=\frac{1}{\sigma_3^2},\quad 
\frac{(1+\frac{1}{\alpha})p'}
{(\sigma_3+\sigma_4)^2}=\frac{1}{\sigma_4^2},
\] 
and allows us to conclude that 
\[
\mathrm{atvar}(g+h)\le(\sigma_3+\sigma_4)^2.
\]
By minimizing over $\sigma_3,\sigma_4$, the second subadditivity in 
\eqref{eq-subadd1.005} follows as well. The proof is complete.
\end{proof}

\begin{rem}
Let $\mathcal{N}_{\mathrm{av}}:=\{g\in\mathcal{B}(\D):\,\mathrm{avar}\,g=0\}$ and 
$\mathcal{N}_{\mathrm{atv}}:=\{g\in\mathcal{B}(\D):\,\mathrm{atvar}\,g=0\}$ be
the respective null subspaces for the seminorms \eqref{eq-seminorm1}. 
It would be natural to consider the Banach spaces which result from forming 
the completions of the quotient spaces 
$\mathcal{B}(\D)/\mathcal{N}_{\mathrm{av}}$ and 
$\mathcal{B}(\D)/\mathcal{N}_{\mathrm{atv}}$ with respect to the corresponding 
norms. 
\end{rem}

%\end{document}

\subsection{Makarov's growth estimate of a Bloch function}

The following result is immediate from the work of Makarov (see \cite{Mak1},
and \cite{Pombook}, Theorem 8.9 and Exercise 8.5.2); 
more or less the same argument can also be found in the work of Clunie 
and MacGregor \cite{CluMac}, but it is applied with less precision. 
It should be mentioned also that at
about the same time, Boris Korenblum \cite{K} found a cruder growth estimate 
than Makarov by studying dilatation as a map from the Bloch space to BMOA. 
In a sense, the present work may be viewed as a refinement of Korenblum's 
approach. 

\begin{thm} {\rm(Makarov)}
If $g\in\mathcal{B}(\D)$ with $g(0)=0$, then, for $0<r<1$, we have that
\[
\frac{\int_\Te|g(r\zeta)|^2\diff s(\zeta)}{\log\frac{1}{1-r^2}}
\le\|g\|^2_{\mathcal{B}(\D)}\quad\text{and}\quad 
\int_\Te\exp\Bigg\{\frac{|g(r\zeta)|^2}
{\tau\log\frac{1}{1-r^2}}\Bigg\}\diff s(\zeta)
\le\frac{\tau}{\tau-\|g\|_{\mathcal{B}(\D)}^2},
\]
provided that $\|g\|^2_{\mathcal{B}(\D)}<\tau<+\infty$. In particular, 
it follows that 
\[
\mathrm{avar}\,g\le\|g\|_{\mathcal{B}(\D)}^2\quad\text{and}\quad 
\mathrm{atvar}\,g\le\|g\|_{\mathcal{B}(\D)}^2.
\]
\label{thm-Makarov1}
\end{thm}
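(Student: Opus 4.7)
The plan is to reduce both conclusions of the theorem to the single Gaussian-type moment bound
\[
M_{2k}(r):=\int_\Te|g(r\zeta)|^{2k}\ds(\zeta)\le k!\,B^{2k}L(r)^k,
\]
valid for every integer $k\ge0$, where $B:=\|g\|_{\mathcal{B}(\D)}$ and $L(r):=\log\frac{1}{1-r^2}$. The starting point is the Littlewood--Paley (Hardy--Stein) identity, which for any holomorphic $h:\D\to\C$ reads
\[
\int_\Te|h(r\zeta)|^2\ds(\zeta)=|h(0)|^2+\int_{\D(0,r)}|h'(z)|^2\log\frac{r^2}{|z|^2}\dA(z),
\]
and which I would verify directly by evaluating both sides on Taylor series. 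The case $k=1$ (i.e.\ the first inequality of the theorem) then follows by applying this with $h=g$, invoking the pointwise Bloch estimate $|g'(z)|^2\le B^2/(1-|z|^2)^2$, and evaluating the elementary integral
\[
\int_0^r\frac{2s\log(r^2/s^2)}{(1-s^2)^2}\ds=L(r),
\]
which I would compute by integrating by parts after the substitution $v=1-s^2$.

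For $k\ge2$, I would argue by induction, applying the Hardy--Stein identity to $h=g^k$ (which again vanishes at the origin) and using $|(g^k)'|^2=k^2|g|^{2k-2}|g'|^2$ together with the Bloch estimate. After passage to polar coordinates and insertion of the inductive hypothesis, this yields
\[
M_{2k}(r)\le 2k^2B^2\int_0^r\frac{s\log(r^2/s^2)}{(1-s^2)^2}\,M_{2k-2}(s)\,\ds\le 2k\cdot k!\,B^{2k}\,J_k(r),
\]
where
\[
J_k(r):=\int_0^r\frac{s\log(r^2/s^2)\,L(s)^{k-1}}{(1-s^2)^2}\ds.
\]
Closing the induction is equivalent to the calculus inequality $J_k(r)\le L(r)^k/(2k)$, and this is the place where I expect the argument to require the most care.

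To handle it, I would introduce the auxiliary function $\Psi_k(r):=L(r)^k/(2k)-J_k(r)$ and show $\Psi_k'(r)\ge0$; since $\Psi_k(0)=0$ this will yield $\Psi_k\ge0$. Differentiating $J_k$ under the integral sign (the boundary term vanishes because $\log(r^2/s^2)$ is zero at $s=r$) and then using the substitution $t=L(s)$ produces
\[
J_k'(r)=\frac{1}{r}\int_0^{L(r)}t^{k-1}\e^t\,\diff t.
\]
With $T:=L(r)$, the identity $r^2/(1-r^2)=\e^T-1$, and $L'(r)=2r/(1-r^2)$, the sign of $r\Psi_k'(r)$ equals the sign of
\[
T^{k-1}(\e^T-1)-\int_0^Tt^{k-1}\e^t\,\diff t=\int_0^T(T^{k-1}-t^{k-1})\e^t\,\diff t,
\]
which is manifestly nonnegative. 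This closes the induction and secures the moment bound.

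Once the moment bound is established, the exponential inequality follows from a Taylor expansion and geometric summation:
\[
\int_\Te\exp\Bigg\{\frac{|g(r\zeta)|^2}{\tau L(r)}\Bigg\}\ds(\zeta)=\sum_{k=0}^\infty\frac{M_{2k}(r)}{k!\,(\tau L(r))^k}\le\sum_{k=0}^\infty\Big(\frac{B^2}{\tau}\Big)^k=\frac{\tau}{\tau-B^2},
\]
which is the desired finite bound precisely when $\tau>B^2=\|g\|_{\mathcal{B}(\D)}^2$. The inequalities $\mathrm{avar}\,g\le B^2$ and $\mathrm{atvar}\,g\le B^2$ are then immediate from the definitions \eqref{eq-asvar1} and \eqref{eq-astvar1}.
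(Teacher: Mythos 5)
Your proof is correct and complete. The paper does not actually supply a proof of this theorem; it only cites Makarov~\cite{Mak1} and Pommerenke~\cite{Pombook} (Theorem~8.9 and Exercise~8.5.2). What you have written is precisely the classical moment-method argument that those references use: reduce the exponential estimate to the Gaussian-type moment bound $\int_\Te|g(r\zeta)|^{2k}\diff s\le k!\,\|g\|_{\mathcal{B}(\D)}^{2k}\bigl(\log\tfrac{1}{1-r^2}\bigr)^k$ via the Hardy--Stein identity and an induction on $k$, and then sum the exponential series. I verified the normalization of the Hardy--Stein identity against the paper's $\dA=\diff x\,\diff y/\pi$ convention (testing on $h(z)=z$ and $h(z)=z^2$), the substitution $v=1-s^2$ that produces $L(r)$ in the $k=1$ case, and the inductive step.

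The one place that deserves emphasis is your calculus lemma $J_k(r)\le L(r)^k/(2k)$. The lazy bound $L(s)^{k-1}\le L(r)^{k-1}$ for $s\le r$ would only give $J_k(r)\le L(r)^k/2$, hence $M_{2k}\le k\cdot k!\,B^{2k}L(r)^k$, which after summing yields the weaker constant $B^2\tau/(\tau-B^2)^2$ rather than the stated $\tau/(\tau-B^2)$. Your monotonicity argument for $\Psi_k=L^k/(2k)-J_k$ is exactly what closes this gap: the substitution $t=L(s)$ turning the weight $s\diff s/(1-s^2)^2$ into $\tfrac12\e^t\diff t$ is correct, $r\Psi_k'(r)=\int_0^T(T^{k-1}-t^{k-1})\e^t\diff t\ge0$ follows from $\e^T-1=\int_0^T\e^t\diff t$, and differentiation under the integral sign is legitimate since the boundary term $F(r,r)$ vanishes and $\partial_rF(r,s)=2sL(s)^{k-1}/(r(1-s^2)^2)$ is bounded near $s=0$. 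Everything checks out.
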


%\subsection{Comparison of the main result with Makarov's estimate}
%Note that Makarov's result (Theorem \ref{thm-Makarov1}) does not calculate 
%the asymptotic (tail) variannces $\mathrm{avar}\,g$ and $\mathrm{atvar}\,g$ 
%for a given $g\in\mathcal{B}(\D)$, it just provides a common upper bound 
%for them. However, 
%Makarov's estimate shows that
%$\mathrm{avar}_u\,\mathcal{G}_1\le1$ and $\mathrm{atvar}_u\,\mathcal{G}_1\le1$,
Let $\mathcal{G}_1$ and $\mathcal{G}_2$ denote the two unit balls 
\[
\mathcal{G}_1=\mathrm{ball}_0\,\mathcal{B}(\D):=\big\{g\in\mathcal{B}(\D):\,
\|g\|_{\mathcal{B}(\D)}\le1,\,\,\,g(0)=0\big\};
\]
and 
\[
\mathcal{G}_2=\mathrm{ball}\,\Pop L^\infty(\D):=\big\{g=\Pop\mu:\,
\|\mu\|_{L^\infty(\D)}\le1\big\}.
\]
In view of Proposition \ref{prop-stability1} as well as 
Proposition \ref{prop-2.1.3} below, we see that for all essential 
purposes, $\mathcal{G}_2$ is bigger than $\mathcal{G}_1$. Our main result, 
Theorem \ref{thm-main}, establishes that 
$\mathrm{atvar}_u\,\mathcal{G}_2=\mathrm{atvar}\,\mathcal{G}_2=1$. An 
inspiration for the present work is the paper \cite{AIPP} 
by Astala, Ivrii, Per\"al\"a, and Prause, where it was shown that 
$\mathrm{avar}\,\mathcal{G}_2\le1$; later, it was discovered that 
$\mathrm{avar}_u\,\mathcal{G}_2<1$ (see \cite{Hed2}). In particular, the
asymptotic variance and the asymptotic tail variance do not always coincide.
This may have some relation with dimension properties of quasicircles, 
see e.g. \cite{LZ}.
In comparison, Makarov's Theorem \ref{thm-Makarov1} obtains that 
$\mathrm{avar}_u\,\mathcal{G}_1\le1$ and $\mathrm{atvar}_u\,\mathcal{G}_1\le1$,
which together with Per\"al\"a's Lemma \ref{lem-Perala} only leads to the 
following rather weak estimates:  
$\mathrm{avar}_u\,\mathcal{G}_2\le64/\pi^2$ and 
$\mathrm{atvar}_u\,\mathcal{G}_2\le64/\pi^2=6.484\ldots$.
%, which is considerably
%worse  for the alternative unit ball
%\[
%\mathcal{G}_2=\mathrm{ball}\,\Pop L^\infty(\D):=\big\{g=\Pop\mu:\,
%\|\mu\|_{L^\infty(\D)}\le1\big\},
%\]
%which is for all essential purposes bigger than the standard Bloch ball
%$\mathcal{G}_1$ (see Proposition \ref{prop-2.1.3} below).
%In comparison, our main result (Theorem \ref{thm-main}) \emph{shows that}
%$\mathrm{atvar}_u\,\mathcal{G}_2=\mathrm{atvar}\,\mathcal{G}_2=1$, 
%\emph{whereas related work from \cite{Hed2} gives that} 
%$\mathrm{avar}_u\,\mathcal{G}_2<1$; in particular, the
%asymptotic variance and the asymptotic tail variance do not always coincide.

%\end{document}

\subsection{Marshall's estimate of the exponential type spectrum}
The following is the key observation of Marshall \cite{Mar1}. 

\begin{prop}
{\rm(Marshall)}
If $g\in\mathcal{B}(\D)$ has $\mathrm{atvar}\,g<\sigma^2$, where $\sigma$
is a positive real number, then
\begin{equation*}
\int_\Te|\e^{t g(r\zeta)}|\diff s(\zeta)
=\Ordo\big((1-r^2)^{-\sigma^2|t|^2/4}\big)\quad \text{as}\,\,\,\,\, r\to1^-,
%\int_\Te
%\exp\Bigg\{\alpha\frac{|g_\varphi(r\zeta)|^2}
%{\log\frac{1}{1-r^2}}\Bigg\}\diff s(\zeta).
\end{equation*}
%\label{eq-Marshall4}
%\label{eq-Marshall1.1}
%then
%\end{equation}
where the implied constant is uniform in $t\in\C$.
\label{prop-Marshall1}
\end{prop}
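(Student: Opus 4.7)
The plan is to combine the elementary bound $|\e^{tg(r\zeta)}| \le \e^{|t|\,|g(r\zeta)|}$ with an AM-GM splitting tuned to the tail variance threshold. Throughout, I write $L = L(r) := \log\frac{1}{1-r^2}$ for brevity.

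First I would apply AM-GM in the form $ab \le \tfrac12 a^2 + \tfrac12 b^2$ after rescaling: for any $c>0$,
\[
|t|\,|g(r\zeta)| \;=\; \frac{|g(r\zeta)|}{\sqrt{c}}\cdot |t|\sqrt{c} \;\le\; \frac{|g(r\zeta)|^2}{2c} + \frac{|t|^2 c}{2}.
\]
The correct choice is $c = \sigma^2 L/2$, which yields
\[
|t|\,|g(r\zeta)| \;\le\; \frac{|g(r\zeta)|^2}{\sigma^2 L} + \frac{\sigma^2 |t|^2 L}{4}.
\]
Exponentiating and observing that $\e^{\sigma^2 |t|^2 L/4} = (1-r^2)^{-\sigma^2|t|^2/4}$, I obtain the pointwise bound
\[
|\e^{tg(r\zeta)}| \;\le\; (1-r^2)^{-\sigma^2|t|^2/4}\,\exp\Bigg(\frac{|g(r\zeta)|^2}{\sigma^2 L}\Bigg).
\]

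The next step is to integrate both sides over $\Te$ against $\diff s$. The factor $(1-r^2)^{-\sigma^2|t|^2/4}$, being independent of $\zeta$, comes out of the integral, and what remains is precisely the expectation \eqref{eq-astvar2} with $\tau = \sigma^2$. By the hypothesis $\mathrm{atvar}\,g < \sigma^2$ and the definition \eqref{eq-astvar1} of the asymptotic tail variance, this remaining integral stays bounded as $r \to 1^-$, so one ends up with
\[
\int_\Te |\e^{tg(r\zeta)}|\,\diff s(\zeta) \;\le\; C\,(1-r^2)^{-\sigma^2|t|^2/4}
\]
for some finite constant $C = C(g,\sigma)$. The bounding quantity $C$ is manifestly independent of $t$ (the $t$-dependence is entirely contained in the explicit power of $1-r^2$), so the estimate is uniform in $t \in \C$ as claimed.

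I do not anticipate any serious obstacle: the argument is essentially a Young-type exponential estimate combined with the defining property of $\mathrm{atvar}\,g$. The only delicate point is the precise choice of the splitting parameter $c$, which must be tuned so that the coefficient in front of $|g(r\zeta)|^2/L$ equals exactly $1/\sigma^2$ in order to match the threshold appearing in the definition of the asymptotic tail variance; any other choice would either worsen the exponent of $(1-r^2)$ or leave the remaining integral unbounded.
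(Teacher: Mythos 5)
Your proof is correct and is essentially the same argument as the one given in the paper. Marshall's version packages the Young/AM-GM splitting as the expansion of $\bigl\lvert g(r\zeta)/(\sigma\sqrt{L}) - \sigma\bar t\sqrt{L}/2\bigr\rvert^2 \ge 0$, which bounds $\re(tg(r\zeta))$ directly; you pass first to $|t|\,|g(r\zeta)|$ and then apply AM-GM with the same tuned parameter $c=\sigma^2 L/2$, landing on the identical inequality $\re(tg(r\zeta)) \le |g(r\zeta)|^2/(\sigma^2 L) + \sigma^2|t|^2 L/4$, followed by exponentiation, integration over $\Te$, and an appeal to the definition of $\mathrm{atvar}$ (using that the set of admissible $\tau$ is an up-ray, so $\tau=\sigma^2$ qualifies when $\mathrm{atvar}\,g<\sigma^2$).
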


\begin{proof}
%So, we start with having
%\begin{equation}
%\limsup_{r\to1^-}\int_\Te\exp\Bigg\{\frac{|g(r\zeta)|^2}
%{\sigma_1^2\log\frac{1}{1-r^2}}\Bigg\}\diff s(\zeta)<+\infty 
%\label{eq-Marshall1.1}
%\end{equation}
%
Marshall \cite{Mar1} expands the following modulus-squared, for a
complex parameter $t\in\C$:
\begin{multline}
0\le\Bigg|\frac{g(r\zeta)}{\sigma\sqrt{\log\frac{1}{1-r^2}}}
-\frac{\sigma\bar t}{2}\sqrt{\log\frac{1}{1-r^2}}\Bigg|^2
\\
=\frac{|g(r\zeta)|^2}
{\sigma^2\log\frac{1}{1-r^2}}+\frac{\sigma^2|t|^2}{4}\log\frac{1}{1-r^2}
-\re (t g(r\zeta)).
\label{eq-Marshall2}
\end{multline}   
It is immediate from \eqref{eq-Marshall2} that
\begin{equation*}
\re (t g(r\zeta))\le\frac{|g(r\zeta)|^2}
{\sigma^2\log\frac{1}{1-r^2}}+\frac{\sigma^2|t|^2}{4}\log\frac{1}{1-r^2},
\label{eq-Marshall3}
\end{equation*}
and as we exponentiate both sides, and then integrate over the circle $\Te$, 
we arrive at
\begin{equation}
%\int_\Te \bigg|\bigg(\frac{r^2\zeta^2\varphi'(r\zeta)}{[\varphi(r\zeta)]^2}
%\bigg)^t\bigg|\diff s(\zeta)=
\int_\Te|\e^{t g(r\zeta)}|\diff s(\zeta)
%\\
\le(1-r^2)^{-\sigma^2|t|^2/4}\int_\Te
\exp\Bigg\{\frac{|g(r\zeta)|^2}
{\sigma^2\log\frac{1}{1-r^2}}\Bigg\}\diff s(\zeta),
\label{eq-Marshall4}
\end{equation}
from which the assertion of the proposition is immediate.
\end{proof}

This has the following consequence for the exponential type spectrum 
$\beta_g(t)$ of the function $\e^g$, where $g\in\mathcal{B}(\D)$.
 
\begin{cor} {\rm(Marshall)}
For $g\in\mathcal{B}(\D)$ and $0\le\sigma<+\infty$, we have the implication 
\[
\mathrm{atvar}\,g\le\sigma^2\,\,\,\,\Longrightarrow\,\,\,\,
\forall t\in\C:\,\,\beta_g(t)\le \frac{\sigma^2|t|^2}{4}.
\]
\label{cor-Marshall1.22}
\end{cor}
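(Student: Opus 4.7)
The proof should be essentially a direct unpacking of the definition of $\beta_g(t)$ combined with the tail bound established in Proposition \ref{prop-Marshall1}. The plan is to fix the function $g$ and show that for every positive $\tilde\sigma>\sigma$, we get $\beta_g(t)\le \tilde\sigma^2|t|^2/4$; then letting $\tilde\sigma\to\sigma^+$ finishes the job.

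In more detail, suppose $\mathrm{atvar}\,g\le\sigma^2$. By the very definition \eqref{eq-astvar1}--\eqref{eq-astvar2}, for any $\tilde\sigma>\sigma$ we have $\mathrm{atvar}\,g<\tilde\sigma^2$, so the hypothesis of Proposition \ref{prop-Marshall1} is met with $\tilde\sigma$ in place of $\sigma$. Applying that proposition, we obtain a constant $M=M(\tilde\sigma,g)$, uniform in $t\in\C$, such that
\[
\int_\Te|\e^{t g(r\zeta)}|\diff s(\zeta)\le M\,(1-r^2)^{-\tilde\sigma^2|t|^2/4},\qquad 0<r<1.
\]
Taking logarithms and dividing by $\log\frac{1}{1-r^2}>0$ (valid for $r$ sufficiently close to $1$) gives
\[
\frac{\log\int_\Te|\e^{t g(r\zeta)}|\diff s(\zeta)}{\log\frac{1}{1-r^2}}\le \frac{\tilde\sigma^2|t|^2}{4}+\frac{\log M}{\log\frac{1}{1-r^2}}.
\]
Now I would pass to the limit superior as $r\to1^-$. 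The second term tends to $0$ because $\log\frac{1}{1-r^2}\to+\infty$, so by Definition \ref{defn-1} we conclude $\beta_g(t)\le\tilde\sigma^2|t|^2/4$.

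Finally, since $\tilde\sigma>\sigma$ was arbitrary, taking the infimum over such $\tilde\sigma$ yields the desired bound $\beta_g(t)\le\sigma^2|t|^2/4$, for every $t\in\C$. There is really no substantive obstacle here: once Proposition \ref{prop-Marshall1} is in hand, the corollary is just the statement that the ``big-$\mathrm{O}$'' exponent in the growth rate of $\int_\Te|\e^{tg(r\zeta)}|\diff s(\zeta)$ in powers of $(1-r^2)^{-1}$ is exactly what the exponential type spectrum measures; the only minor point to be careful about is the passage from the strict inequality $\mathrm{atvar}\,g<\tilde\sigma^2$ in the hypothesis of Proposition \ref{prop-Marshall1} to the weak inequality $\mathrm{atvar}\,g\le\sigma^2$ in the corollary, which is handled by the $\tilde\sigma\to\sigma^+$ limiting argument above.
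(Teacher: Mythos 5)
The proposal is correct and is precisely the argument the paper has in mind; the paper itself treats the corollary as an immediate consequence of Proposition \ref{prop-Marshall1} and gives no separate proof. Your handling of the strict-vs.-weak inequality mismatch via the auxiliary parameter $\tilde\sigma>\sigma$ (which also covers the endpoint $\sigma=0$, since $\tilde\sigma$ remains strictly positive) is the right small point to flag, and passing to the $\limsup$ to kill the $\log M/\log\frac{1}{1-r^2}$ term is exactly what Definition \ref{defn-1} calls for.
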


%\end{document}

\subsection{The estimate from above of the exponential 
type spectrum associated with a function in the unit 
ball of $\Pop L^\infty(\D)$}
\label{subsec-intmeans001}
For a function $g=\Pop\mu$, where $\mu\in L^\infty(\D)$, we need to estimate
from above the exponential type spectrum. The key estimate is the following.
To simplify the notation, we agree to write
\begin{equation}
I_g(a,r):=\int_\Te
\exp\Bigg\{a\frac{r^4|g(r\zeta)|^2}
{\log\frac{1}{1-r^2}}\Bigg\}\diff s(\zeta),\qquad 0<r<1.
\label{eq-Igint01}
\end{equation}

\begin{prop}
Let $g:=\Pop\mu$, where $\mu\in L^\infty(\D)$ with $\|\mu\|_{L^\infty(\D)}\le1$.
If $I_g(a,r)$ is the integral in \eqref{eq-Igint01}, we have the estimate
\[
\int_\Te|\e^{tr^2 g(r\zeta)}|\diff s(\zeta)\le
\begin{cases}
I_g(a,r)(1-r^2)^{-|t|^2/(4a)},\qquad |t|\le2a,
\\
I_g(a,r)(1-r^2)^{a-|t|},\qquad\quad\,\,\, |t|>2a.
\end{cases}
\]
\label{prop-betterest1.001}
\end{prop}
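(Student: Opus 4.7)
The plan is to establish, for each range of $|t|$, a pointwise upper bound on $\re(tr^2 g(r\zeta))$ that can be exponentiated and integrated against $\diff s(\zeta)$ to produce $I_g(a,r)$ times the claimed power of $(1-r^2)$. Write $L:=\log\frac{1}{1-r^2}$ for brevity, and recall that by Lemma \ref{lem-ptwise2} the assumption $\|\mu\|_{L^\infty(\D)}\le1$ gives the pointwise cap $r^2|g(r\zeta)|\le L$.

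For the range $|t|\le 2a$, I would adapt Marshall's completing-the-square calculation from the proof of Proposition \ref{prop-Marshall1}, rescaled to accommodate the parameter $a$. Expanding
\[
0\le\biggabs{\frac{r^2 g(r\zeta)}{\sqrt{L/a}}-\frac{\bar t\sqrt{L/a}}{2}}^2
\]
yields the pointwise inequality
\[
\re(tr^2 g(r\zeta))\le\frac{a r^4|g(r\zeta)|^2}{L}+\frac{|t|^2}{4a}\,L,
\]
and exponentiating then integrating over $\Te$ produces the first bound at once.

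For the range $|t|>2a$, I would sharpen the quadratic estimate by exploiting the pointwise cap $r^2|g(r\zeta)|\le L$. The key algebraic input, which I expect to be the only step requiring any thought, is that for $0\le x\le L$ and $|t|\ge 2a$ one has
\[
|t|\,x\le\frac{a x^2}{L}+(|t|-a)\,L.
\]
This follows from the factorization $a x^2-|t|\,xL+(|t|-a)L^2=a(x-L)\big(x-(|t|-a)L/a\big)$: when $|t|\ge 2a$ the second root $(|t|-a)L/a$ is at least $L$, so both linear factors are non-positive for $0\le x\le L$, making the product non-negative. Applied with $x=r^2|g(r\zeta)|$ (which is in $[0,L]$ thanks to Lemma \ref{lem-ptwise2}), this gives
\[
\re(tr^2 g(r\zeta))\le|t|\,r^2|g(r\zeta)|\le\frac{a r^4|g(r\zeta)|^2}{L}+(|t|-a)L,
\]
and exponentiation followed by integration over $\Te$ yields the second bound $I_g(a,r)(1-r^2)^{a-|t|}$. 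As a consistency check, at the transition $|t|=2a$ both bounds collapse to $I_g(a,r)(1-r^2)^{-a}$, so the two cases match seamlessly at the boundary.
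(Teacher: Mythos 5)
Your proof is correct and follows essentially the same route as the paper: the case $|t|\le 2a$ is exactly Marshall's completing-the-square bound \eqref{eq-Marshall4} rescaled by $a$, and the case $|t|>2a$ combines that bound with the pointwise cap of Lemma \ref{lem-ptwise2}. The only difference is organizational: the paper splits $t=t_1+t_2$, applies the quadratic bound to $t_1$ and the pointwise bound to $t_2$, and optimizes the exponent $-|t_2|-|t_1|^2/(4a)$ over the decomposition (arriving at $t_1=2at/|t|$), whereas you fold the two ingredients into a single pointwise inequality via the factorization $ax^2-|t|xL+(|t|-a)L^2=a(x-L)\bigl(x-(|t|-a)L/a\bigr)$; both yield the identical exponent $a-|t|$.
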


\begin{proof}
%To simplify the notation, let us agree to write
%\begin{equation*}
%I_g(a,r):=\int_\Te
%\exp\Bigg\{a\frac{r^4|g(r\zeta)|^2}
%{\log\frac{1}{1-r^2}}\Bigg\}\diff s(\zeta),\qquad 0<r<1.
%\end{equation*}
In view of \eqref{eq-Marshall4} and \eqref{eq-Igint01}, we have for positive 
$a$ that
\begin{equation}
\int_\Te|\e^{t_1r^2 g(r\zeta)}|\diff s(\zeta)
\le I_g(a,r)\,(1-r^2)^{-|t_1|^2/(4a)},\qquad 0<r<1,\,\,\,t_1\in\C.
\label{eq-betterest1.002}
\end{equation}
while, by the pointwise estimate of Lemma \ref{lem-ptwise2},
\begin{equation}
|\e^{t_2r^2 g(r\zeta)}|\le (1-r^2)^{-|t_2|},\qquad 0<r<1,\,\,\,t_2\in\C.
\label{eq-betterest1.003}
\end{equation}
For $t=t_1+t_2\in\C$, it follows from a combination of 
\eqref{eq-betterest1.002} and \eqref{eq-betterest1.003} that
\begin{equation}
\int_\Te|\e^{tr^2 g(r\zeta)}|\diff s(\zeta)=
\int_\Te|\e^{t_1r^2 g(r\zeta)}\e^{t_2r^2 g(r\zeta)}|\diff s(\zeta)
\le I_g(a,r)\,(1-r^2)^{-|t_2|-|t_1|^2/(4a)},\qquad 0<r<1,
\label{eq-betterest1.004}
\end{equation}
where we are free to optimize over all decompositions $t=t_1+t_2$.
For $|t|\le2a$, the best decomposition is $t=t_1+0$, that is, 
$t_1=t$ and $t_2=0$, while for $|t|>2a$, the best choice is $t_1=\theta t$
and $t_2=(1-\theta)t$, where $\theta:=2a/|t|$. 
After insertion into \eqref{eq-betterest1.004}, we arrive at the claimed
estimate.
\end{proof}

We may now supply the proof of Corollary \ref{cor-intmeast01} as an
application of our main theorem, Theorem \ref{thm-main}. 

\begin{proof}[Proof of Corollary \ref{cor-intmeast01}]
By Theorem \ref{thm-main}, we know that $I_g(a,r)\le C(a)$ for all $0<r<1$
and $0<a<1$. It follows from Proposition \ref{prop-betterest1.001} that 
for all $0<a<1$, 
\[
\int_\Te|\e^{tr^2 g(r\zeta)}|\diff s(\zeta)\le
\begin{cases}
C(a)(1-r^2)^{-|t|^2/(4a)},\qquad |t|\le2a,
\\
C(a)(1-r^2)^{a-|t|},\qquad\quad\,\,\, |t|>2a.
\end{cases}
\]
The factor $r^2$ in the exponent tends to $1$ as $r\to1^-$, and nothing 
changes drastically if it gets replaced by $1$. Finally, by letting $a$ 
approach $1$, we obtain the claimed estimate of the exponential type 
spectrum $\beta_g(t)$.
\end{proof}

%\end{document}
\section{Elementary properties of Bloch functions}

\subsection{The Bergman projection of an auxiliary bounded function}
For the proof of part (b) of Theorem \ref{thm-main}, we need to supply
the Bergman projection of a special function $\mu_0\in L^\infty(\D)$.

\begin{lem}
The Bergman projection of the function $\mu_0\in L^\infty(\D)$ given by
\[
\mu_0(z):=\frac{1-\bar z}{1-z},\qquad z\in\D,
\]
equals
\[
\Pop\mu_0(z)=\int_\D\frac{1-\bar w}{(1-w)(1-z\bar w)^2}\diff A(w)=
\frac{1}{z^2}\log\frac{1}{1-z}-\frac{1}{z},\qquad z\in\D\setminus\{0\}.
\]
\label{lem-calc1}
\end{lem}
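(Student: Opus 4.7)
The plan is to compute the Bergman projection by expanding into power series and reading off the Taylor coefficients. Writing the reproducing kernel as $\frac{1}{(1-z\bar w)^2}=\sum_{n\ge0}(n+1)z^n\bar w^n$, which converges uniformly for $w$ in the support region once $|z|<1$ is fixed, I would interchange sum and integral to obtain
\[
\Pop\mu_0(z)=\sum_{n\ge0}(n+1)\,c_n\,z^n,\qquad
c_n:=\int_\D\frac{(1-\bar w)\bar w^n}{1-w}\,\diff A(w).
\]
So the whole task reduces to computing $c_n$.

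To evaluate $c_n$, I would pass to polar coordinates $w=\rho\,\e^{\imag\theta}$ and expand the denominator geometrically, $\frac{1}{1-\rho\e^{\imag\theta}}=\sum_{m\ge0}\rho^m\e^{\imag m\theta}$. The angular integral picks out the Fourier frequencies $m=n$ and $m=n+1$ (from the $1$ and the $-\bar w$ summands, respectively), which leaves the clean telescoping expression
\[
\int_0^{2\pi}\frac{1-\rho\e^{-\imag\theta}}{1-\rho\e^{\imag\theta}}\,\e^{-\imag n\theta}\diff\theta=2\pi\rho^n(1-\rho^2).
\]
Multiplying by the remaining $\rho^n$ from $\bar w^n$ and integrating in $\rho$ gives $c_n=\frac{1}{(n+1)(n+2)}$, so the Taylor coefficient of $z^n$ in $\Pop\mu_0$ is simply $\frac{1}{n+2}$.

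It remains to sum $\sum_{n\ge0}\frac{z^n}{n+2}$. Shifting the index, this equals $\frac{1}{z^2}\sum_{k\ge2}\frac{z^k}{k}=\frac{1}{z^2}\bigl(\log\frac{1}{1-z}-z\bigr)=\frac{1}{z^2}\log\frac{1}{1-z}-\frac{1}{z}$, which is the stated formula (and is manifestly holomorphic at $z=0$, so the apparent pole is removable). The main obstacle is essentially bookkeeping: one must justify the termwise integration (straightforward since the kernel series converges uniformly on $\D$ once we first replace $z$ by $z_0$ with $|z_0|<1$ fixed, while $|\mu_0|\le1$ makes every term $L^1$) and carefully track the telescoping identity $\frac{1}{n+1}-\frac{1}{n+2}=\frac{1}{(n+1)(n+2)}$ that is really doing all the work.
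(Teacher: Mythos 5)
Your proposal is correct, and it is exactly the kind of ``direct computation of the Bergman integral'' that the paper invokes without writing out: expand the kernel $\frac{1}{(1-z\bar w)^2}=\sum_{n\ge0}(n+1)z^n\bar w^n$, use orthogonality of $\{\bar w^n\}$ in polar coordinates to reduce $c_n$ to a beta-type radial integral, and sum the resulting $\sum_{n\ge0}\frac{z^n}{n+2}$. The one small point worth tightening in the write-up is the bookkeeping around the Jacobian: with the paper's normalization $\diff A=\frac{\rho\,\diff\rho\,\diff\theta}{\pi}$, after the angular integral you are left with $\frac{1}{\pi}\int_0^1\rho^n\cdot\rho\cdot 2\pi\rho^n(1-\rho^2)\,\diff\rho=2\int_0^1\rho^{2n+1}(1-\rho^2)\,\diff\rho=\frac{1}{(n+1)(n+2)}$ (you only mention multiplying by ``the remaining $\rho^n$'' and not the extra $\rho$ from the measure, though your final value of $c_n$ is right). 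A marginally cleaner route to $c_n$ that avoids polar coordinates entirely: expand $\frac{1}{1-w}=\sum_{j\ge0}w^j$ and use $\int_\D w^j\bar w^m\,\diff A=\frac{\delta_{jm}}{m+1}$, which immediately gives $c_n=\frac{1}{n+1}-\frac{1}{n+2}$ without any telescoping rearrangement in the radial integral. Either way, the argument is sound and the justification for interchanging sum and integral is adequate.
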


The singularity at the origin is of course removable.

\begin{proof}[Proof of Lemma \ref{lem-calc1}]
This can be shown by direct computation of the Bergman integral. 
%An alternative path, however, is to consider the potential
%\[
%\Jop\mu_0(z):=z\int_\D\frac{1-\bar w}{(1-w)(1-z\bar w)}\diff A(w),
%\qquad z\in\C,
%\]
%which is easy to obtain explicitly by potential-theoretic considerations, 
%and to derive $\Pop\mu_0$ as the restriction to $\D$ of the 
%$\partial_z$-derivative of $\Jop\mu_0(z)$. 
\end{proof}

\begin{rem}
Along the segment $[0,1[\subset\D$, the function $\Pop\mu_0$
grows pretty much maximally quickly, compared with Lemma \ref{lem-ptwise2}:
\begin{multline*}
\Pop\mu_0(x)=
\frac{1}{x^2}\log\frac{1}{1-x}-\frac{1}{x}=
\frac{1}{x^2}\log\frac{1}{1-x^2}-\frac1{x^2}\big(x-\log(1+x)\big)
\\
\ge\frac{1}{x^2}\log\frac{1}{1-x^2}-\frac12,
\qquad 0<x<1.
\end{multline*}
\end{rem}

We will apply the following general estimate to the function $\mu=\mu_0$
of Lemma \ref{lem-calc1}.  

\begin{prop}
If $\mu\in L^\infty(\D)$ with $\|\mu\|_{L^\infty(\D)}\le1$, then, for $0<a<+\infty$,
we have
\begin{equation*}
(1-r^2)^{1/a}\int_\Te\big|\e^{r^2\zeta^2\Pop\mu(r\zeta)}\big|^2\diff s(\zeta)
\le\int_\Te\exp\Bigg\{a\frac{r^4|\Pop\mu(r\zeta)|^2}
{\log\frac{1}{1-r^2}}\Bigg\}\diff s(\zeta),\qquad 0<r<1.
%\label{eq-Marshall4'}
\end{equation*}
\label{prop-Marshall.est1}
\end{prop}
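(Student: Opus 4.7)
The plan is to mimic the proof of Marshall's Proposition \ref{prop-Marshall1} (see the expansion \eqref{eq-Marshall2}), extracting from a nonnegative squared modulus a pointwise inequality that is then exponentiated and integrated over $\Te$. Set $g := \Pop\mu$. The goal is to establish, for all $\zeta\in\Te$ and $0<r<1$,
\[
2\re\bigl(r^2\zeta^2 g(r\zeta)\bigr)\le a\frac{r^4|g(r\zeta)|^2}{\log\frac{1}{1-r^2}}+\frac{1}{a}\log\frac{1}{1-r^2}.
\]

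To produce this, apply the trivial bound $2\re(A\bar B)\le|A|^2+|B|^2$ (equivalently $|A-B|^2\ge0$) with the choice
\[
A:=\sqrt{\frac{a}{\log\frac{1}{1-r^2}}}\,r^2 g(r\zeta),\qquad B:=\bar\zeta^{\,2}\sqrt{\frac{1}{a}\log\frac{1}{1-r^2}}.
\]
Since $|\zeta|=1$ on $\Te$, a direct computation shows that $|A|^2$ and $|B|^2$ match the two summands on the right of the pointwise inequality, while the cross term simplifies to $A\bar B=r^2\zeta^2 g(r\zeta)$, matching the exponent on the left. This is the entire content of the estimate.

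Once the pointwise inequality is in hand, exponentiate both sides and integrate with respect to $\diff s(\zeta)$ over $\Te$. Using $|\e^{r^2\zeta^2 g(r\zeta)}|^2=\e^{2\re(r^2\zeta^2 g(r\zeta))}$ and $\e^{(1/a)\log\frac{1}{1-r^2}}=(1-r^2)^{-1/a}$, rearranging (multiplying through by $(1-r^2)^{1/a}$) yields the claimed inequality.

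I do not anticipate any real obstacle here: the argument is pure algebra together with monotonicity of the exponential. Notably, neither the hypothesis $\|\mu\|_{L^\infty(\D)}\le1$ nor the fact that $g$ is a Bergman projection plays any role — the inequality is valid for any holomorphic function $g$ on $\D$. The factor $\zeta^2$ appearing on the left-hand side is forced by placing $\bar\zeta^{\,2}$ inside $B$, which is precisely what cancels in the cross term $A\bar B$ to deliver the natural boundary quantity $r^2\zeta^2 g(r\zeta)$ whose exponential one wants to control.
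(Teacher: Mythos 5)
Your argument is correct and is essentially the paper's own proof unpacked: the paper simply cites Marshall's inequality \eqref{eq-Marshall4} with $t=2$, $\sigma^2=1/a$, and $g$ replaced by $z^2\Pop\mu(z)$, and that inequality is itself derived from the same completing-the-square identity ($|A-B|^2\ge0$, i.e.\ \eqref{eq-Marshall2}) that you use. Your remark that neither $\|\mu\|_{L^\infty(\D)}\le1$ nor the Bergman-projection structure is needed is accurate; these hypotheses are carried along only because the proposition is later applied in that specific form.
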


\begin{proof}
This is immediate from Marshall's inequality \eqref{eq-Marshall4}, with 
$t:=2$, $a:=1/\sigma^2$, and $g(z):=z^2\Pop\mu(z)$.
\end{proof}

It is now easy to obtain the sharpness part (b) of Theorem \ref{thm-main}.

\begin{cor}
If $\mu_0\in L^\infty(\D)$ is as in Lemma \ref{lem-calc1}, and $0<a<+\infty$, 
we have the estimate from below
\begin{equation*}
\int_\Te\exp\Bigg\{a\frac{r^4|\Pop\mu_0(r\zeta)|^2}
{\log\frac{1}{1-r^2}}\Bigg\}\diff s(\zeta)\ge\e^{-2}(1-r^2)^{-(a-1)/a}.
\end{equation*}
%so that, in particular, 
%\begin{equation*}
%\lim_{r\to1^-}\int_\Te\exp\Bigg\{a\frac{r^4|\Pop\mu_0(r\zeta)|^2}
%{\log\frac{1}{1-r^2}}\Bigg\}\diff s(\zeta)=1,\qquad 1<a<+\infty.
%\end{equation*}
\label{cor-main(b)}
\end{cor}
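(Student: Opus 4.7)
The plan is to use Proposition \ref{prop-Marshall.est1} in reverse: it provides a lower bound for the desired exponential integral in terms of $\int_\Te|\e^{r^2\zeta^2\Pop\mu_0(r\zeta)}|^2\diff s(\zeta)$, so it suffices to produce an explicit lower estimate of this latter integral when $\mu_0$ is the specific function of Lemma \ref{lem-calc1}. The whole point of choosing $\mu_0(z)=(1-\bar z)/(1-z)$ is that $z^2\Pop\mu_0(z)$ admits an exceptionally clean closed form.

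First I would apply Lemma \ref{lem-calc1} to get
\[
r^2\zeta^2\Pop\mu_0(r\zeta)=\log\frac{1}{1-r\zeta}-r\zeta,\qquad \zeta\in\Te,\ 0<r<1,
\]
so that
\[
\bigl|\e^{r^2\zeta^2\Pop\mu_0(r\zeta)}\bigr|^2=\frac{\e^{-2r\re\zeta}}{|1-r\zeta|^2}.
\]
Since $r\re\zeta\le r\le 1$ on $\Te$, the numerator is bounded below by $\e^{-2}$, which gives the pointwise inequality $|\e^{r^2\zeta^2\Pop\mu_0(r\zeta)}|^2\ge\e^{-2}|1-r\zeta|^{-2}$.

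Next, I would integrate over the unit circle. The integral $\int_\Te|1-r\zeta|^{-2}\diff s(\zeta)$ is the classical Szeg\H{o}/Poisson kernel computation: expanding $(1-r\zeta)^{-1}=\sum_{n\ge 0}r^n\zeta^n$ and using orthogonality yields $\int_\Te|1-r\zeta|^{-2}\diff s(\zeta)=\sum_{n\ge 0}r^{2n}=(1-r^2)^{-1}$. Combining these two observations gives
\[
\int_\Te\bigl|\e^{r^2\zeta^2\Pop\mu_0(r\zeta)}\bigr|^2\diff s(\zeta)\ge\frac{\e^{-2}}{1-r^2}.
\]

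Finally, inserting this bound into Proposition \ref{prop-Marshall.est1} with $\mu:=\mu_0$ (noting that $\|\mu_0\|_{L^\infty(\D)}=1$) produces
\[
\int_\Te\exp\Bigg\{a\frac{r^4|\Pop\mu_0(r\zeta)|^2}{\log\frac{1}{1-r^2}}\Bigg\}\diff s(\zeta)\ge(1-r^2)^{1/a}\cdot\frac{\e^{-2}}{1-r^2}=\e^{-2}(1-r^2)^{-(a-1)/a},
\]
which is the claimed estimate. There is really no hard step here: the proof is essentially mechanical once one trusts the closed form of $\Pop\mu_0$ and Marshall's inequality; the content is all hidden in the clever choice of $\mu_0$. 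The right-hand side blows up precisely when $a>1$, which immediately yields part (b) of Theorem \ref{thm-main}.
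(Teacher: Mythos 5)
Your proof is correct and is essentially identical to the paper's argument: read off the closed form $\e^{z^2\Pop\mu_0(z)}=\e^{-z}(1-z)^{-1}$ from Lemma \ref{lem-calc1}, bound $|\e^{-2r\zeta}|\ge\e^{-2}$, integrate $|1-r\zeta|^{-2}$ to get $(1-r^2)^{-1}$, and feed the result into Proposition \ref{prop-Marshall.est1}. The paper states these same steps, just more tersely.
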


\begin{proof}
By Lemma \ref{lem-calc1}, 
\[
\e^{z^2\Pop\mu_0(z)}=\e^{-z}(1-z)^{-1},
\]
so that
\[
\int_\Te\big|\e^{r^2\zeta^2\Pop\mu_0(r\zeta)}\big|^2\diff s(\zeta)\ge\e^{-2}
\int_\Te|1-r\zeta|^{-2}\diff s(\zeta)=\e^{-2}(1-r^{2})^{-1}.
\]
The assertion of the corollary now follows rather immediately from Proposition 
\ref{prop-Marshall.est1}.
\end{proof}

\begin{rem}
Theorem \ref{thm-main}(b) now follows from the observation that
\[
\lim_{r\to1^-}\e^{-2}(1-r^2)^{-(a-1)/a}=+\infty,\qquad 1<a<+\infty.
\]
\end{rem}

%\end{document}
\subsection{The derivative of a Bloch functions}

%We supply the optimal pointwise growth estimate for the space 
%$\Pop L^\infty(\D)$.

%\begin{lem}
%Suppose that $\mu\in L^\infty(\D)$. Then 
%\[
%|\Pop\mu(z)|\le\|\mu\|_{L^\infty(\D)}\,\frac{1}{|z|^2}\log\frac{1}{1-|z|^2},
%\qquad z\in\D.
%\]
%\label{lem-ptwise1}
%\end{lem}
%
%\begin{proof}
%This follows from the estimate
%\[
%|\Pop\mu(z)|\le\int_\D\frac{|\mu(w)|}{|1-z\bar w|^2}\diff A(w)\le
%\|\mu\|_{L^\infty(\D)}\int_\D\frac{1}{|1-z\bar w|^2}\diff A(w),\qquad z\in\D,
%\]
%by evaluation of the right-hand side integral.
%\end{proof}
We first supply an elementary estimate which applies to the derivative 
of a Bloch function.

\begin{lem}
Suppose $f:\D\to\C$ is holomorphic, with $(1-|z|^2)|f(z)|\le 1$ on $\D$. 
%If in addition $f(0)=0$, 
Then we also have that
\[
(1-|z|^2)\omega(|z|)\bigg|\frac{f(z)-f(0)}{z}\bigg|\le1,
\]
where $\omega(t)=\frac13$ for $0\le t<\frac12$, and $\omega(t)=t/(2-t^2)$ 
for $\frac12\le t<1$.
\label{lem-2.1.2}
\end{lem}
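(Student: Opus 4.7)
The plan is to split the argument at the threshold $|z|=1/2$, since the two pieces of $\omega$ correspond to genuinely different estimation regimes.

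For $|z|\ge 1/2$, where $\omega(t)=t/(2-t^2)$, the bound is just a rearrangement of the triangle inequality. I would apply the hypothesis at $z$ to get $(1-|z|^2)|f(z)|\le 1$ and at $0$ to get $|f(0)|\le 1$; adding these produces $(1-|z|^2)|f(z)-f(0)|\le(1-|z|^2)|f(z)|+(1-|z|^2)|f(0)|\le 2-|z|^2$, and dividing by $|z|$ gives precisely the required inequality with $\omega(|z|)=|z|/(2-|z|^2)$. This computation is actually valid for every nonzero $z$; the reason one needs a different argument for small $|z|$ is simply that the resulting $\omega$ tends to $0$ as $|z|\to 0^+$.

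For $|z|<1/2$, the target becomes the stronger $(1-|z|^2)|(f(z)-f(0))/z|\le 3$. The natural approach is to note that $h(z):=(f(z)-f(0))/z$ extends to a holomorphic function on $\D$ with $h(0)=f'(0)$, and then to control $h$ via a Cauchy integral over a circle $|w|=r$ with $|z|<r<1$. After using the residue calculus to cancel the $f(0)$-contribution, the representation reads $h(z)=\frac{1}{2\pi\imag}\oint_{|w|=r}\frac{f(w)}{w(w-z)}\diff w$; combined with $|f(w)|\le 1/(1-r^2)$ on the circle and with the freedom to choose $r$ depending on $|z|$, this yields a family of pointwise estimates on $|h|$. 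At the origin the optimal choice $r=1/\sqrt{3}$ already gives $|f'(0)|\le 3\sqrt{3}/2<3$, so the inequality holds at $z=0$; the task is to propagate this bound throughout the sub-disk.

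The main obstacle will be the numerical tightness of the constant $3$. A naive approach --- applying the maximum modulus principle to $h$ on $\{|w|\le 1/2\}$ after bounding $|h|$ on the boundary by the triangle inequality --- gives only $|h|\le 14/3$ there, and even after multiplication by $(1-|z|^2)\le 1$ this exceeds $3$. To overcome this I would let the Cauchy radius $r$ depend on $|z|$ rather than fixing it at $1/2$, and verify directly, after optimizing $r$, that the product $(1-|z|^2)\,(2-r^2)/(r(1-r^2))$ (which arises from the triangle-inequality-plus-maximum-modulus estimate at radius $r$) stays below $3$ for every $|z|\in[0,1/2)$. An equivalent and perhaps more transparent route is to apply Schwarz's lemma to the rescaled function $w\mapsto f(sw)$ on $|w|\le 1$ for a parameter $s\in(|z|,1)$ --- noting that $\sup_{|w|\le 1}|f(sw)|\le 1/(1-s^2)$ --- and then to minimize the resulting bound over $s$; combining this minimization with the factor $(1-|z|^2)\ge 3/4$ is what one expects to close the gap to the constant $3$ prescribed by $\omega=1/3$.
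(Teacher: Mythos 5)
Your handling of the range $|z|\ge\tfrac12$ coincides with the paper's: the triangle inequality gives $(1-|z|^2)|f(z)-f(0)|\le 2-|z|^2$, whence the factor $t/(2-t^2)$. The gap is in the range $|z|<\tfrac12$, where your proposed verification cannot succeed numerically. Set $\phi(r):=(2-r^2)/(r(1-r^2))$. Then $\phi'$ vanishes at $r_*^2=(5-\sqrt{17})/2\approx0.438$, where $\phi(r_*)\approx4.20$, so
\[
\min_{|z|<r<1}\,(1-|z|^2)\,\frac{2-r^2}{r(1-r^2)}\;\ge\;\frac34\cdot 4.20\;\approx\;3.15\;>\;3
\]
for every $|z|<\tfrac12$ (and the value is $4.20$ at $z=0$). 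The Schwarz-lemma variant is the same bound in disguise: $F(w):=f(sw)-f(0)$ satisfies $|F|\le(2-s^2)/(1-s^2)$ and $F(0)=0$, so Schwarz gives exactly $|h|\le\phi(s)$. Both arguments estimate the Cauchy kernel in sup norm (equivalently, add $|f(0)|\le1$ to $\sup_{|w|=r}|f(w)|$ and invoke the maximum principle), and that loss is fatal at the constant $3$.

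What closes the gap --- and is what the paper does --- is to keep the kernel inside the integral and use its $L^1(\Te)$ norm, which is only logarithmically large. From $f(rz)-f(0)=z\int_\Te\frac{\bar w}{1-z\bar w}\,f(rw)\diff s(w)$ and $|f(rw)|\le(1-r^2)^{-1}$ one gets
\[
\bigg|\frac{f(rz)-f(0)}{rz}\bigg|\le\frac{1}{r(1-r^2)}\int_\Te\frac{\diff s(w)}{|1-z\bar w|}
\le\frac{1}{r(1-r^2)}\,\frac{1}{|z|^2}\log\frac{1}{1-|z|^2},
\]
and the choice $r=2^{-1/2}$, $|rz|\le\tfrac12$ evaluates to $3\sqrt2\log2\approx2.94<3$. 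The factor $\log 2\approx0.69$ supplied by the $L^1$ bound on the kernel is precisely the gain your sup-norm estimates forfeit. Your contour formula $h(z)=\frac{1}{2\pi\imag}\oint_{|w|=r}\frac{f(w)}{w(w-z)}\diff w$ could also be pushed through, but only if $\oint_{|w|=r}|w-z|^{-1}\,|\diff w|$ is evaluated as an integral (it is of logarithmic size) rather than replaced by $2\pi r/(r-|z|)$ or by a maximum-modulus bound.
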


\begin{proof}
%Since $f(0)=0$, 
The Cauchy integral formula applied to the dilate $f_r(\zeta)=f(r\zeta)$ 
gives that
\begin{multline*}
f(rz)-f(0)=f_r(z)-f_r(0)
=\int_\Te\bigg\{\frac1{1-z\bar w}-1\bigg\}f_r(w)\diff s(w)
\\
=\int_\Te\bigg\{\frac1{1-z\bar w}-1\bigg\}f_r(w)\diff s(w)
=z\int_\Te\frac{\bar w}{1-z\bar w}\,f_r(w)\diff s(w).
\end{multline*}
As a consequence, we obtain the estimate
\begin{multline*}
\bigg|\frac{f(rz)-f(0)}{rz}\bigg|\le
\frac{1}{r}\int_\Te\frac{1}{|1-z\bar w|}\,|f_r(w)|\diff s(w)
\le\frac{1}{r(1-r^2)}\int_\Te\frac{1}{|1-z\bar w|}\diff s(w)
\\
\le\frac{1}{r(1-r^2)}\,\frac{1}{|z|^2}\log\frac{1}{1-|z|^2},
\end{multline*}
for $z\in\D$ and $0<r<1$, which in its turn yields
\[
(1-|rz|^2)\bigg|\frac{f(rz)-f(0)}{rz}\bigg|\le
\frac{1-|rz|^2}{r(1-r^2)|z|^2}\log\frac{1}{1-|z|^2}.
\]
We plug in $r=2^{-1/2}$: 
\[
(1-|rz|^2)\bigg|\frac{f(rz)-f(0)}{rz}\bigg|\le
2^{1/2}\frac{2-|z|^2}{|z|^2}\log\frac{1}{1-|z|^2},\qquad r=2^{-1/2},
\]
and check that the right-hand side expression is an increasing function in 
the variable $|z|$. By restricting our attention to $|z|\le2^{-1/2}$, we find 
that
\[
(1-|\zeta|^2)\bigg|\frac{f(\zeta)-f(0)}{\zeta}\bigg|\le
2^{1/2}\frac{2-\frac12}{1/2}\log\frac{1}{1-\frac12}<3,\qquad |\zeta|\le\frac12.
\]
It is of course elementary that the following estimate holds:
\[
(1-|\zeta|^2)\bigg|\frac{f(\zeta)-f(0)}{\zeta}\bigg|\le
\frac{2-|\zeta|^2}{|\zeta|},\qquad 0<|\zeta|<1.
\]
The assertion of the lemma follows from a combination of these two estimates.
\end{proof}

%\end{document}

\subsection{Decomposition of a Bloch function}
It is well known that as sets, $\mathcal{B}(\D)=\Pop L^\infty(\D)$.
Here, we split a given Bloch function as an element of $\Pop L^\infty(\D)$
with small norm plus a smooth remainder. 

\begin{prop} Suppose $g\in\mathcal{B}(\D)$. Then there exists a 
$\nu_g\in L^\infty(\D)$ with $\|\nu_g\|_{L^\infty(\D)}\le\|g\|_{\mathcal{B}(\D)}$, such
that $g(z)=z^2\Pop\nu_g(z)+G(z)$, 
where $G\in H^\infty(\D)$ has $G'\in\mathcal{B}(\D)$, 
with the (semi)norm control
\[
\|G\|_{H^\infty(\D)}\le |g(0)|+6\|g\|_{\mathcal{B}(\D)},\quad
\|G'\|_{{\mathcal B}(\D)}\le 12\|g\|_{\mathcal{B}(\D)}.
\]
\label{prop-2.1.3}
\end{prop}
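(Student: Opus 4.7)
The plan is to choose $\nu_g$ explicitly as $\nu_g(w):=(1-|w|^2)\bar w\,g'(w)$. The pointwise bound $|\nu_g(w)|\le(1-|w|^2)|g'(w)|\le\|g\|_{\mathcal{B}(\D)}$ is immediate from \eqref{eq-Blochnorm}, so $\|\nu_g\|_{L^\infty(\D)}\le\|g\|_{\mathcal{B}(\D)}$. With this choice, the remainder $G(z):=g(z)-z^2\Pop\nu_g(z)$ is forced upon us, and what remains is to put $G$ into workable form and verify the two claimed bounds.

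To that end, I would compute $\Pop\nu_g$ by Taylor expansion. Writing $g(w)=\sum_{n\ge 0}a_n w^n$ and $(1-z\bar w)^{-2}=\sum_{m\ge 0}(m+1)z^m\bar w^m$, the orthogonality $\int_\D w^p\bar w^q\,\diff A=\delta_{pq}/(p+1)$ yields, after a short calculation,
\[
\Pop\nu_g(z)=\sum_{n\ge 2}\frac{n-1}{n+1}\,a_n z^{n-2},\qquad
G(z)=a_0+a_1 z+\sum_{n\ge 2}\frac{2}{n+1}\,a_n z^n.
\]
Using $\frac{2}{n+1}=2\int_0^1 t^n\,\diff t$ and pulling the sum inside the integral rewrites $G$ in closed form as
\[
G(z)=2\int_0^1 g(tz)\,\diff t-g(0).
\]
The termwise Bergman computation is rigorous because $\nu_g\in L^\infty(\D)\subset L^2(\D)$, so $\Pop\nu_g\in A^2(\D)$ has well-defined Taylor coefficients which can be extracted by testing against the orthonormal basis $\{\sqrt{n+1}\,z^n\}_{n\ge 0}$ of $A^2(\D)$.

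From this closed form, both required estimates are routine. Rewriting $G(z)-g(0)=2\int_0^1[g(tz)-g(0)]\,\diff t$ and applying Lemma \ref{lem-pointwise1} to the function $g-g(0)$ (which has Bloch seminorm $\|g\|_{\mathcal{B}(\D)}$ and vanishes at the origin) yields
\[
|G(z)-g(0)|\le\|g\|_{\mathcal{B}(\D)}\int_0^1\log\frac{1+t|z|}{1-t|z|}\,\diff t\le 2\log 2\cdot\|g\|_{\mathcal{B}(\D)},
\]
well inside the claimed $|g(0)|+6\|g\|_{\mathcal{B}(\D)}$. Differentiating under the integral produces $G'(z)=2\int_0^1 t\,g'(tz)\,\diff t$, and
\[
(1-|z|^2)|G'(z)|\le 2\|g\|_{\mathcal{B}(\D)}\int_0^1 t\,\frac{1-|z|^2}{1-t^2|z|^2}\,\diff t\le\|g\|_{\mathcal{B}(\D)},
\]
using $1-t^2|z|^2\ge 1-|z|^2$ for $0\le t\le 1$; again this is well within $12\|g\|_{\mathcal{B}(\D)}$. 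In short, guessing the right $\nu_g$ is the crux; once this is done, the rest is elementary and the constants come out considerably sharper than stated.
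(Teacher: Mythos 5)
Your choice $\nu_g(w):=(1-|w|^2)\bar w\,g'(w)$ is a cleaner and more symmetric selection than the paper's, which takes
$\nu_g(z)=(1-|z|^2)\omega(|z|)\frac{g'(z)-g'(0)}{z}$ with an ad hoc radial cutoff $\omega$ designed to make the quotient bounded (that is the content of Lemma~\ref{lem-2.1.2}) and then estimates $\Pop(\mu_g-\nu_g)$ where $\mu_g$ is the uncut symbol satisfying $z^2\Pop\mu_g=g-g(0)-g'(0)z$. Your version sidesteps Lemma~\ref{lem-2.1.2} entirely, the $L^\infty$ bound is immediate, and you get the very pleasant closed form $G(z)=2\int_0^1 g(tz)\,\diff t-g(0)$; the Taylor-coefficient computation is sound, and your $H^\infty$ bound $|G(z)|\le|g(0)|+2\log 2\cdot\|g\|_{\mathcal{B}(\D)}$ is correct and sharper than the stated $|g(0)|+6\|g\|_{\mathcal{B}(\D)}$.

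However, there is a genuine gap in the last bound. The proposition asserts $\|G'\|_{\mathcal{B}(\D)}\le 12\|g\|_{\mathcal{B}(\D)}$, and by \eqref{eq-Blochnorm} the Bloch seminorm of $G'$ is $\sup_z(1-|z|^2)|G''(z)|$, not $\sup_z(1-|z|^2)|G'(z)|$. What you actually estimated, $(1-|z|^2)|G'(z)|\le\|g\|_{\mathcal{B}(\D)}$, is a bound on $\|G\|_{\mathcal{B}(\D)}$, which is a different (and weaker) statement; it does not control the second derivative, since a Bloch function may have $(1-|z|^2)|G''(z)|$ much larger than $(1-|z|^2)|G'(z)|$. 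The gap is fillable from your own setup. Differentiating the identity $zG'(z)+G(z)=2g(z)-g(0)$ (which follows immediately from your closed form for $G$) gives $zG''(z)=2\bigl(g'(z)-G'(z)\bigr)$, and since $g'(0)=G'(0)=g'(0)$, the function $(g'-G')/z$ is holomorphic across the origin. Your bound $(1-|z|^2)|G'(z)|\le\|g\|_{\mathcal{B}(\D)}$ then yields $(1-|z|^2)|G''(z)|\le 4\|g\|_{\mathcal{B}(\D)}/|z|\le 8\|g\|_{\mathcal{B}(\D)}$ for $|z|\ge\frac12$, and for $|z|<\frac12$ a Schwarz-lemma argument applied to $g'-G'$ on $\D(0,\frac12)$ gives $(1-|z|^2)|G''(z)|\le\frac{32}{3}\|g\|_{\mathcal{B}(\D)}$, so $\|G'\|_{\mathcal{B}(\D)}\le\frac{32}{3}\|g\|_{\mathcal{B}(\D)}<12\|g\|_{\mathcal{B}(\D)}$. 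With this added step your proof is complete and still yields better constants than the paper's.
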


\begin{proof}
%We write 
%\[
%g_1(z):=g(z)-g(0)-g'(0)z,
%\]
%so that $g_1(0)=g_1'(0)=0$. 
We put
\[
\nu_g(z):=(1-|z|^2)\omega(|z|)\frac{g'(z)-g'(0)}{z},
\]
where $\omega(t)$ is as in Lemma \ref{lem-2.1.2}, and observe that by the
assertion of that lemma, $\|\nu_g\|_{L^\infty(\D)}\le\|g\|_{\mathcal{B}(\D)}$, as 
needed. If we write $\mu_g$ for the function
\[
\mu_g(z):=(1-|z|^2)\frac{g'(z)-g'(0)}{z},
\] 
then a direct calculation shows that
\begin{multline}
z^2\Pop\mu_g(z)
=z^2\int_\D\frac{(1-|w|^2)(g'(w)-g'(0))}{w(1-z\bar w)^2}\diff A(w)
\\
=\sum_{j=0}^{+\infty}(j+1)z^{j+2}\int_\D(1-|w|^2)\bar w^{j}\frac{g'(w)-g'(0)}{w}
\diff A(w)
\\
=\sum_{j=0}^{+\infty}(j+1)z^{j+2}\,\frac{\hat g(j+2)}{j+1}
=g(z)-g(0)-g'(0)z,\qquad z\in\D,
\label{eq-direct1.01}
\end{multline}
where the $\hat g(j)$ denote the Taylor coefficients of $g$. The difference 
$\mu_g-\nu_g$ may be written in the form
\[
\mu_g(z)-\nu_g(z)=(1-|z|^2)(1-\omega(|z|))\frac{g'(z)-g'(0)}{z}
=\frac{1-\omega(|z|)}{\omega(|z|)}\nu_g(z),
\]
which immediately yields the estimate
\[
|\mu_g(z)-\nu_g(z)|\le\frac{1-\omega(|z|)}{\omega(|z|)}\|\nu_g\|_{L^\infty(\D)}
\le\frac{1-\omega(|z|)}{\omega(|z|)}\|g\|_{\mathcal{B}(\D)}.
\]
A straightforward calculation tells us that
\[
\int_\D\frac{1-\omega(|w|)}{\omega(|w|)}\frac{\diff A(w)}{|1-z\bar w|^2}
\le5,\qquad z\in\D,
\]
and, that, as a consequence,
\[
\|\Pop(\mu_g-\nu_g)\|_{H^\infty(\D)}\le5\|g\|_{\mathcal{B}(\D)}.
\]
Finally, we split $g$ as $g=\Pop\nu_g+G$, where
\[
G(z):=g(0)+g'(0)z+\Pop(\mu_g-\nu_g)(z),
\]
and the claimed estimate $\|G\|_{H^\infty(\D)}\le |g(0)|+6\|g\|_{\mathcal{B}(\D)}$ 
follows. The estimate for the Bloch seminorm of $G'$ is obtained in a
similar manner.
\end{proof}

%\end{document}
\section{Identities for dilates of harmonic functions}
\label{sec-identities}

\subsection{An identity involving dilates of harmonic functions}
The following identity interchanges dilations, and although elementary, it 
is quite important.

\begin{lem}
Suppose $f,g:\D\to\C$ are two harmonic functions, which are are 
area-integrable: $f,g\in L^1(\D)$. Then we have that
\[
\int_\D f(rz)\bar g(z)\diff A(z)=\int_\D f(z)\bar g(rz)\diff A(z),\qquad
0<r<1.
\] 
\label{lem-basic1}
\end{lem}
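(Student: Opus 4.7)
The plan is to pass to polar coordinates $z=\rho\e^{\imag\theta}$ and expand both $f$ and $g$ in Fourier series in the angular variable, exploiting the fact that harmonicity dictates the $\rho$-dependence. Specifically, any harmonic function $h$ on $\D$ has an expansion
\[
h(\rho\e^{\imag\theta})=\sum_{n\in\Z}\hat h(n)\,\rho^{|n|}\e^{\imag n\theta},
\]
the form $\rho^{|n|}$ being forced by separation of variables in $\varDelta h=0$ together with regularity at the origin (equivalently, $h$ decomposes as the sum of a holomorphic and a conjugate-holomorphic piece, and each contributes the usual Taylor monomials $z^{n}$ or $\bar z^{n}$). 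Applying this to $f$ and $g$, the dilation inserts a factor $r^{|n|}$ into $\hat f(n)$ for the dilate $f_r$, and a factor $r^{|n|}$ into $\hat g(n)$ for the dilate $g_r$.

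With these expansions in hand, the key computation is to insert them into the left-hand side, swap sum and integral, and use the two orthogonality relations
\[
\frac{1}{2\pi}\int_0^{2\pi}\e^{\imag n\theta}\e^{-\imag m\theta}\diff\theta=\delta_{nm},\qquad
\int_0^1\rho^{2|n|+1}\diff\rho=\frac{1}{2(|n|+1)}.
\]
Combined with the normalization $\diff A=\pi^{-1}\rho\,\diff\rho\,\diff\theta$, this collapses the double integral to the single symmetric series
\[
\int_\D f(rz)\bar g(z)\diff A(z)=\sum_{n\in\Z}\frac{\hat f(n)\,\overline{\hat g(n)}\,r^{|n|}}{|n|+1},
\]
and the same calculation with the dilation applied to $g$ instead of $f$ produces exactly the same expression. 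The claimed identity follows, since $r^{|n|}$ sits symmetrically on $\hat f(n)$ or on $\hat g(n)$ with no other change.

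The main (minor) obstacle is justifying the interchange of summation and integration under the mere assumption $f,g\in L^1(\D)$. This is not serious: for $0<r<1$ the dilate $f_r$ extends as a harmonic function to a neighborhood of $\bar\D$, hence is bounded on $\bar\D$ with an absolutely and uniformly convergent Fourier series; together with $g\in L^1(\D)$ this guarantees $f_r\bar g\in L^1(\D)$ and legitimizes Fubini. For each fixed $\rho\in(0,1)$, harmonicity forces the Fourier coefficients of $g(\rho\e^{\imag\theta})$ in $\theta$ to equal $\hat g(n)\rho^{|n|}$, so multiplying by the uniformly convergent series for $f_r$ and integrating term by term in $\theta$ is valid; the remaining $\rho$-integral converges absolutely by Fubini applied to the original $L^1$ product. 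This completes the argument.
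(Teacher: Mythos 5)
Your proof is correct and follows essentially the same route as the paper: expand both harmonic functions in their Fourier/power series, use orthogonality of $\e^{\imag n\theta}$ in the angular integral and the radial integral $\int_0^1\rho^{2|n|+1}\diff\rho=1/(2(|n|+1))$, and observe that the resulting series $\sum_{n\in\Z}\hat f(n)\overline{\hat g(n)}\,r^{|n|}/(|n|+1)$ is symmetric in where the factor $r^{|n|}$ is attached. The only cosmetic difference is the regularization: the paper introduces a second dilation parameter $\varrho$, proves the identity for the pair $(f_{r\varrho},g_\varrho)$ where everything is smooth up to the boundary, and then lets $\varrho\to1^-$ using $f_\varrho\to f$ in $L^1(\D)$ and $f_{r\varrho}\to f_r$ in $L^\infty(\D)$; you instead argue directly with the dilate $f_r$, which is bounded on $\bar\D$, so that $f_r\bar g\in L^1(\D)$ and Fubini applies. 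Either bookkeeping works. If you want your final interchange of sum and $\rho$-integral to be airtight, note that $|\hat g(n)|\le\tfrac12(|n|+2)\|g\|_{L^1(\D)}$ (and likewise for $f$), so the series $\sum_n|\hat f(n)||\hat g(n)|\,r^{|n|}\rho^{2|n|}$ is dominated uniformly in $\rho\in[0,1]$ by a convergent series in $n$; this is the dominated-convergence step your phrase ``Fubini applied to the original $L^1$ product'' gestures at.
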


\begin{proof}
Both integrals are well-defined, since $f,g\in L^1(\D)$ and the dilates
$f_r(z)=f(rz)$, $g_r(z)=g(rz)$, are bounded for $0<r<1$. If we consider also
the dilates $f_\varrho,g_\varrho$ for $0<\varrho<1$, we may use Fourier methods 
to establish the identities 
\begin{equation}
\int_\D f(r\varrho z)\bar g(\varrho z)\diff A(z)
=\sum_{j\in\Z}\frac{\varrho^{2|j|}r^{|j|}}{|j|+1}
\hat f(j)\overline{\hat g(j)},
\label{eq-basic1.1}
\end{equation}
and
\begin{equation}
\int_\D f(\varrho z)\bar g(r\varrho z)\diff A(z)
=\sum_{j\in\Z}\frac{\varrho^{2|j|}r^{|j|}}{|j|+1}\hat f(j)\overline{\hat g(j)},
\label{eq-basic1.2}
\end{equation}
so that
\[
\int_\D f(r\varrho z)\bar g(\varrho z)\diff A(z)=
\int_\D f(\varrho z)\bar g(r\varrho z)\diff A(z).
\] 
Here, we use $\hat f(j),\hat g(j)$ to denote the Fourier coefficients of the
functions $f,g$, considered as distributions on the circle $\Te$. The claimed
identity now follows by letting $\varrho\to1$, since $f_\varrho\to f$ and
$g_\varrho\to g$ in $L^1(\D)$, while $f_{r\varrho}\to f_r$ and
$g_{r\varrho}\to g_r$ in $L^\infty(\D)$. 
\end{proof}

\subsection{An identity involving dilates which connects the inner 
products on the circle and the disk} The following identity is 
key to our analysis. 

\begin{lem}
Suppose $g,h:\D\to\C$ are functions, where $g$ is holomorphic
and $h$ is harmonic. 
If $g\in L^1(\D)$ and $h$ is the Poisson integral of a function in $L^1(\Te)$,
then we have that
\[
\langle g_r,\bar z h\rangle_\Te=\langle g,(\partial h)_r\rangle_\D,
\]
where we write $f_r$ for the dilate of the function $f$: 
$f_r(\zeta)=f(r\zeta)$.
\label{lem-basic2}
\end{lem}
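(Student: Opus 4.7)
The identity relates a boundary pairing at radius $r$ to an interior pairing involving $(\partial h)_r$, so the plan is to represent $\partial h$ as an integral against the boundary values $h|_\Te$, exchange the order of integration via Fubini, and then invoke the reproducing property of the Bergman kernel for $g\in A^1(\D)$.

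Concretely, I would first expand $h$ via its Poisson representation: since $h$ is the Poisson integral of some $\phi\in L^1(\Te)$, writing $\hat h(j)$ for the Fourier coefficients of $\phi$, one has inside $\D$ the absolutely convergent series
\[
h(w)=\sum_{j\ge 0}\hat h(j)\,w^{j}+\sum_{j<0}\hat h(j)\,\bar w^{-j},\qquad w\in\D.
\]
Applying $\partial_w$ annihilates the anti-holomorphic part, and recognizing the resulting series as (the derivative of) the Cauchy kernel yields
\[
\partial h(w)=\sum_{j\ge 1}j\hat h(j)\,w^{j-1}=\int_\Te\frac{\bar\zeta\,h(\zeta)}{(1-w\bar\zeta)^{2}}\,\diff s(\zeta),\qquad w\in\D.
\]
Specializing to $w=rz$ and taking complex conjugates, I substitute this into the definition of $\langle g,(\partial h)_r\rangle_\D$. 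Because $(1-r\zeta\bar z)^{-2}$ is uniformly bounded by $(1-r)^{-2}$ on $\D\times\Te$ while $g\in L^1(\D)$ and $h\in L^1(\Te)$, Fubini's theorem justifies the interchange
\[
\langle g,(\partial h)_r\rangle_\D=\int_\Te\zeta\,\bar h(\zeta)\bigg(\int_\D\frac{g(z)}{(1-r\zeta\bar z)^{2}}\,\diff A(z)\bigg)\,\diff s(\zeta).
\]

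The inner integral is precisely $\Pop g(r\zeta)$; since $g\in A^1(\D)$ is holomorphic, the Bergman reproducing property gives $\Pop g(r\zeta)=g(r\zeta)$. Hence
\[
\langle g,(\partial h)_r\rangle_\D=\int_\Te g(r\zeta)\,\zeta\,\bar h(\zeta)\,\diff s(\zeta)=\langle g_r,\bar z\,h\rangle_\Te,
\]
which is the desired identity. The one item requiring care is justifying the closed-form integral representation of $\partial h$ and the subsequent Fubini exchange, but both are routine because dilation by $r<1$ makes all kernels uniformly bounded; the only genuine input beyond this is the standard reproducing property of the Bergman kernel for $A^{1}(\D)$.
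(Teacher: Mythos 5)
Your proof is correct, but it takes a genuinely different route from the paper's. The paper's argument is a direct Fourier--Plancherel computation: both $\langle g_r,\bar z h\rangle_\Te$ and $\langle g,(\partial h)_r\rangle_\D$ are expanded in terms of the Fourier coefficients $\hat g(j),\hat h(j)$ and shown to equal the same series $\sum_{j\ge 0}\hat g(j)\overline{\hat h(j+1)}\,r^j$, the disk-side computation resting on the identity \eqref{eq-basic1.1} already established for Lemma~\ref{lem-basic1}. You instead represent $\partial h$ as the derivative of the Cauchy (Herglotz) kernel acting on the boundary data, swap the order of integration via Fubini, and then invoke the reproducing formula $\int_\D g(z)(1-r\zeta\bar z)^{-2}\diff A(z)=g(r\zeta)$ for $g\in A^1(\D)$. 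Both approaches are valid. The paper's version is somewhat more elementary (everything reduces to matching coefficient series, which also makes the role of the dilation parameter $r$ transparent), while yours is more conceptual in that it identifies the boundary-to-interior passage as a Bergman-kernel duality and cleanly isolates the two inputs: the integral representation of $\partial h$ and the $A^1$ reproducing property. One small point worth making explicit in your write-up is that the Bergman reproducing formula $\Pop g(w)=g(w)$, $w\in\D$, does hold for $g\in A^1(\D)$ even though $\Pop$ is unbounded on $L^1(\D)$; this follows by approximating $g$ by its dilates $g_\varrho\in A^2(\D)$ and using that for fixed $w$ the kernel is bounded, so the limit passes --- you gesture at this by calling it ``standard,'' but since the lemma's hypothesis is precisely $g\in L^1(\D)$ (not $L^2$), a reader may appreciate the one-line justification.
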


\begin{proof}
As in the proof of Lemma \ref{lem-basic1}, we let $\hat g(j),\hat h(j)$ 
denote the Fourier coefficients of the boundary distributions associated
with $g,h$ on $\Te$. By the Plancherel identity, then, we know that
\[
\langle g_r,\bar z h\rangle_\Te=\int_\Te \zeta g(r\zeta)\bar h(\zeta)
\diff s(\zeta)=\sum_{j=0}^{+\infty}\hat g(j)\overline{\hat h(j+1)}\,r^j.
\]
On the other hand, since
\[
(\partial h)(\zeta)=\sum_{j=0}^{+\infty}(j+1)\hat h(j+1)\zeta^j,\qquad \zeta\in\D,
\]
it follows from \eqref{eq-basic1.1} by letting $\varrho\to1^-$ that
\[
\langle g,(\partial h)_r\rangle_\D=\int_\D g(z)\overline{(\partial h)(rz)}
\diff A(z)=\sum_{j=0}^{+\infty}\frac{r^j}{j+1}
(j+1)\hat g(j)\overline{\hat h(j+1)}=
\sum_{j=0}^{+\infty}\hat g(j)\overline{\hat h(j+1)}\,r^j.
\]
The assertion of the lemma follows.
\end{proof}

%\end{document}

\section{Dilational reverse isoperimetry: Hardy and Bergman}

\subsection{The isoperimetric inequality of Carleman}
The classical isoperimetric inequality says that the area enclosed by 
a closed loop of length $L$ is at most $L^2/(4\pi)$. Torsten Carleman 
(see \cite{Car}, \cite{Str}) found a nice analytical approach to this fact
which gave the estimate
\begin{equation}
\|f\|_{A^{2p}(\D)}\le\|f\|_{H^p(\D)},\qquad f\in H^p(\D), 
\label{eq-isoper1}
\end{equation}
for $0<p<+\infty$. 
Here, $H^1(\D)$ is the $p=1$ instance of the classical \emph{Hardy space}
$H^p(\D)$, for $0<p\le+\infty$. For $0<p<+\infty$, $H^p(\D)$ consists all 
holomorphic functions $f:\D\to\C$ subject to the norm boundedness condition
\[
\|f\|_{H^p(\D)}^p:=\sup_{0<r<1}\int_\Te|f(r\zeta)|^p\diff s(\zeta)
<+\infty.
\]
It is well-known that for $f\in H^p(\D)$, the function has well-defined 
nontangential boundary values a.e., and that the norm is attained for $r=1$:
\[
\|f\|_{H^p(\D)}^p=\int_\Te|f(\zeta)|^p\diff s(\zeta).
\]

\subsection{A similar reverse inequality for dilates}

%\section{Estimation of a partial $L^1$-Dirichlet integral on a 
%concentric disk}
%
%\subsection{The setting}

The Carleman estimate \eqref{eq-isoper1} has a reverse if we take the Hardy 
norm of a dilate $f_r(\zeta):=f_r(\zeta)$ in place of the function. 
%\eqref{eq-isoper3.01} 
Here, we will not dwell on that matter, but instead look for a somewhat
similar reverse estimate. 
%can be extended by introducing a suitable 
%weight function. 
We consider a nontrivial harmonic function $\hfun:\D\to\C$, and 
%let the holomorphic derivative $\partial\hfun$ play 
%the role of the previous function $f$ of \eqref{eq-isoper3.01}, while a 
%suitable power of $|\hfun|$ is the weight. Then, as in 
%\eqref{eq-isoper3.01}, 
obtain from the Cauchy-Schwarz inequality that
\begin{multline}
\|(\partial \hfun)_r\|_{A^1(\D)}=\int_\D|(\partial \hfun)(r\zeta)|\diff A(\zeta)
=\frac{1}{r^2}\int_{\D(0,r)}|\partial \hfun(z)|\diff A(z)
\\
\le\frac{1}{r^2}\Bigg(\int_{\D(0,r)}\frac{|\hfun(z)|^{\theta}}{1-|z|^2}\diff A(z)
\Bigg)^{1/2}\Bigg(\int_{\D(0,r)}
\frac{|\partial \hfun(z)|^{2}}{|\hfun(z)|^{\theta}}
(1-|z|^2)\diff A(z)\Bigg)^{1/2},
%\\
%=\frac{1}{r^2}\bigg(\log\frac{1}{1-r^2}\bigg)^{1/2}
%\Bigg(\int_{\D(0,r)}\Bigg(\int_{\D(0,r)}|f(z)|^2(1-|z|^2)\diff A(z)\Bigg)^{1/2}
%\\
%\le\frac{1}{r^2}\bigg(\log\frac{1}{1-r^2}\bigg)^{1/2}
%\Bigg(\int_{\D}|f(z)|^2(1-|z|^2)\diff A(z)\Bigg)^{1/2}
%=\frac{1}{r^2}\bigg(\log\frac{1}{1-r^2}\bigg)^{1/2}
%\|f\|_{A^2_1(\D)}.
\label{eq-isoper3.02}
\end{multline}
%as a result of the Cauchy-Schwarz inequality.
where $\theta$ is a real parameter, which we shall  confine to interval 
$0\le\theta<1$. 
\section{Duality and the estimate of the uniform asymptotic 
tail variance}

\subsection{Green's formula}

We recall Green's formula for the unit disk:
\[
\int_\D (u\hDelta v-v\hDelta u)\diff A=\frac{1}{2}\int_\Te 
(u\partial_{\mathrm{n}}v-v\partial_{\mathrm{n}}u)\diff s, 
\]
where $u,v$ are both assumed $C^2$-smooth in the closed unit disk $\bar\D$,
and the normal derivative is in the exterior direction. The constant $\frac12$
is the result of our normalizations. If we choose $v(z):=1-|z|^2$, the formula
simplifies to
\begin{equation}
\int_\D u\diff A+\int_\D (1-|z|^2)\hDelta u(z)\diff A(z)=\int_\Te u\diff s. 
\label{eq-Green1}
\end{equation}
Next, we let $h:\bar\D\to\C$ be $C^2$-smooth and harmonic in $\D$, 
let $\epsilon$ denote a small positive constant, and consider the function
\[
u(z):=(|h(z)|^2+\epsilon)^{s},
\]
where it is assumed that $\frac12<s\le 1$. Then $u$ is $C^2$-smooth on $\bar\D$,
and we may calculate its Laplacian:
\begin{multline}
\hDelta u=s\bigg\{(|h|^2+\epsilon)^{s-1}\hDelta|h|^2
-(1-s)(|h|^2+\epsilon)^{s-2}|\partial|h|^2|^2\bigg\}
\\
=s(|h|^2+\epsilon)^{s-2}\bigg\{(|h|^2+\epsilon)(|\partial h|^2
+|\bar\partial h|^2)
-(1-s)|\bar h\partial h+h\partial \bar h|^2\bigg\}.
\label{eq-Laplace1.1}
\end{multline}
For complex numbers $a,b\in\C$ and a positive real number $t\in\R_+$, 
we know by direct algebraic manipulation
that
\[
|a+b|^2=|a|^2(1+t^2)+|b|^2(1+t^{-2})-|at-bt^{-1}|^2.
\]
As we apply this identity in the setting of \eqref{eq-Laplace1.1},
with $a:=\bar h\partial h$ and $b:=h\partial\bar h$, we find that
\begin{multline}
\hDelta u
=s(|h|^2+\epsilon)^{s-2}\bigg\{\epsilon(|\partial h|^2+|\bar\partial h|^2)
+[1-(1-s)(1+t^2)]|h\partial h|^2
\\
+[1-(1-s)(1+t^{-2})]|h\bar\partial h|^2
+(1-s)|t\bar h\partial h-t^{-1}h\partial \bar h|^2\bigg\}
\label{eq-Laplace1.2}
\end{multline}
We choose $t$ so that we may suppress the term with  $|h\bar\partial h|^2$,
which happens for $t:=\sqrt{(1-s)/s}$. It now follows from 
\eqref{eq-Laplace1.2} that
\begin{multline}
\hDelta u
=s(|h|^2+\epsilon)^{s-2}\bigg\{\epsilon(|\partial h|^2+|\bar\partial h|^2)
+\frac{2s-1}{s}\,|h\partial h|^2
+s\bigg|\frac{1-s}{s}\bar h\partial h-h\partial \bar h\bigg|^2\bigg\}
\\
\ge(2s-1)(|h|^2+\epsilon)^{s-2}|h\partial h|^2.
\label{eq-Laplace1.3}
\end{multline}
As we insert this inequality into the identity \eqref{eq-Green1}, the result 
is that
\begin{multline}
\int_\D(|h|^2+\epsilon)^s\diff A+
(2s-1)\int_\D (1-|z|^2)(|h(z)|^2+\epsilon)^{s-2}|h(z)\partial h(z)|^2
\diff A(z)
\\
\le \int_\D u\diff A+\int_\D(1-|z|^2)\Delta u(z)
=\int_\Te u\diff s=\int_\Te(|h|^2+\epsilon)^s\diff s.
\label{eq-Green2}
\end{multline}

We recall the Hardy space $h^q(\D)$ of harmonic functions. Since we will
only be interested in exponents in the range $1<q\le2$, these are just the
Poisson extensions of boundary functions in $L^q(\Te)$: $h^q(\D)\cong
L^q(\Te)$, isometrically and isomorphically.   

\begin{prop}
{$(1<q\le 2)$}
Suppose $h:\D\to\C$ is the Poisson extension to the disk $\D$ of a boundary 
function in $L^q(\Te)$, also denoted by $h$.  Then, unless $h$ vanishes 
identically, it enjoys the estimate
\begin{equation*}
\int_\D|h|^q\diff A+
(q-1)\int_\D (1-|z|^2)\frac{|\partial h(z)|^2}{|h(z)|^{2-q}}
\diff A(z)
\le\int_\Te|h|^q\diff s.
%\label{eq-Green3}
\end{equation*}
\label{prop-Green1}
\end{prop}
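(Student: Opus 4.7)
\medskip

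\noindent\textbf{Proof proposal.} The plan is to specialize the Green's-formula derivation culminating in \eqref{eq-Green2} by setting $s = q/2$, which is permissible because the hypothesis $\tfrac12 < s \le 1$ corresponds exactly to $1 < q \le 2$. With this choice, $2s-1 = q-1$ and $s-2 = q/2 - 2$, so the coefficient and the exponent line up with those in the statement. I would then remove the regularization $\epsilon$ via monotone/dominated convergence, and finally pass from smooth dilates back to $h$ itself.

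Since \eqref{eq-Green2} was derived under the assumption $h \in C^2(\bar\D)$, which a generic Poisson extension of an $L^q(\Te)$ function does not satisfy up to the boundary, I would first apply it to the dilates $h_r(z) := h(rz)$ for $0<r<1$. Each $h_r$ is harmonic on a neighborhood of $\bar\D$, hence $C^\infty$ on $\bar\D$. Inequality \eqref{eq-Green2} with $s=q/2$ then reads
\begin{equation*}
\int_\D (|h_r|^2+\epsilon)^{q/2}\diff A + (q-1)\int_\D (1-|z|^2)\,\frac{|h_r(z)|^2|\partial h_r(z)|^2}{(|h_r(z)|^2+\epsilon)^{2-q/2}}\diff A(z) \le \int_\Te (|h_r|^2+\epsilon)^{q/2}\diff s.
\end{equation*}

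For the $\epsilon\downarrow 0$ limit, the boundary and first area integrals of $(|h_r|^2+\epsilon)^{q/2}$ are handled by dominated convergence, the integrand being bounded by $(\|h_r\|_{L^\infty(\bar\D)}^2+1)^{q/2}$ for $\epsilon\le 1$. For the derivative term, the factor $|h_r|^2/(|h_r|^2+\epsilon)^{2-q/2}$ is monotone nondecreasing as $\epsilon\downarrow 0$ since $2-q/2>0$, with pointwise limit $|h_r|^{q-2}$ on $\{h_r\neq 0\}$ and $0$ on $\{h_r=0\}$. Monotone convergence therefore yields
\begin{equation*}
\int_\D |h_r|^q \diff A + (q-1)\int_\D (1-|z|^2)\,\frac{|\partial h_r(z)|^2}{|h_r(z)|^{2-q}}\diff A(z) \le \int_\Te |h_r|^q \diff s,
\end{equation*}
interpreting the left integrand as vanishing on the zero set of $h_r$.

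For the final step, I would let $r\to 1^-$. Since $h$ is the Poisson integral of an $L^q(\Te)$ function with $1<q$, the dilates $h_r$ converge to $h$ in $L^q(\Te)$, so the right-hand side converges to $\int_\Te |h|^q\diff s$. On the left, $h_r\to h$ and $\partial h_r\to\partial h$ pointwise in $\D$, and because the real-analytic harmonic function $h$ is not identically zero, its zero set has area zero, so the integrands converge a.e.\ on $\D$; Fatou's lemma then delivers the claimed inequality. The only delicate point is the monotonicity used in the $\epsilon\downarrow 0$ step, which rests on $q>1$ giving a positive coefficient and $q\le 2$ giving $2-q/2>0$; everything else is routine convergence machinery.
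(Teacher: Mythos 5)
Your proof is correct and follows essentially the same route as the paper's: apply the Green's-formula inequality \eqref{eq-Green2} with $s=q/2$ to the smooth dilates $h_r$, pass to the limit $\epsilon\downarrow 0$ (monotone/dominated convergence), and then let $r\to1^-$ using $L^q(\Te)$ convergence of the dilates on the right and Fatou's lemma on the left. The paper's proof is terser and takes the two limits together, but the substance is identical; your more explicit handling of the $\epsilon\downarrow 0$ step and the observation that the zero set of a nontrivial harmonic $h$ has area zero are the right details to supply.
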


\begin{proof}
We apply the estimate \eqref{eq-Green2} to the dilates $h_r(z)=h(rz)$, for
$0<r<1$, and use $s=q/2$. Then as $r\to1^-$ and $\epsilon\to0^+$, 
\[
\int_\Te(|h_r|^2+\epsilon)^{q/2}\diff s\to\int_\Te|h|^q\diff s, 
\]
and Fatou's lemma tells gives us the necessary control of the left-hand side.
\end{proof}

%\begin{rem}
%The most interesting case is probably when $h$ is real-valued. For 
%real-valued $h$, the zero set $Z(h):=\{z\in\D:\,h(z)=0\}$ is seen to be
%a union of curves, and the above estimate supplies some amount of geometric 
%control on $Z(h)$. Indeed, it may be argued that 
%\[
%h(z)\asymp |\partial h(z)|\,\mathrm{dist}(z,Z(h))
%\]
%at least locally, which then can get implemented in the second integral on 
%the left-hand side.
%\end{rem}

%\end{document}
\subsection{Hardy space methods and the weighted 
Bergman reverse Carleman isoperimetric inequality} 
We now turn the estimate \eqref{eq-isoper3.02} into a tool for effective
control of a function $h$ in the harmonic Hardy space $h^q(\D)$ for
$1<q\le2$. First, we implement \eqref{eq-isoper3.02} with $\theta=2-q$: 
\begin{multline}
\|(\partial \hfun)_r\|_{A^1(\D)}
\le\frac{1}{r^2}\Bigg(\int_{\D(0,r)}\frac{|\hfun(z)|^{2-q}}{1-|z|^2}\diff A(z)
\Bigg)^{1/2}\Bigg(\int_{\D(0,r)}
\frac{|\partial \hfun(z)|^{2}}{|\hfun(z)|^{2-q}}
(1-|z|^2)\diff A(z)\Bigg)^{1/2},
\label{eq-isoper3.03}
\end{multline}
for $0<r<1$, where all we ask of $h:\D\to\C$ is that it is harmonic in $\D$.
Using polar coordinates, we see that
\begin{multline}
\int_{\D(0,r)}\frac{|\hfun(z)|^{2-q}}{1-|z|^2}\diff A(z)=\int_0^r\int_\Te
|h_\varrho(\zeta)|^{2-q}\diff s(\zeta)\frac{2\varrho\diff\varrho}{1-\varrho^2}
\\
\le\sup_{0<\varrho<r}\|h_\varrho\|_{L^{2-q}(\Te)}^{2-q}\int_0^r
\frac{2\varrho\diff\varrho}{1-\varrho^2}
=\sup_{0<\varrho<r}\|h_\varrho\|_{L^{2-q}(\Te)}^{2-q}\log\frac{1}{1-r^2}
\label{eq-isoper3.04}
\end{multline}
We recognize on the right-hand side the harmonic Hardy space $h^{2-q}(\D)$
quasinorm of the dilate $h_r$. From H\"older's inequality and the restriction
$1<q\le2$, we know that 
$\|h_\varrho\|_{L^{2-q}(\Te)}\le\|h_\varrho\|_{L^{1}(\Te)}$, and, in addition, 
the norms $\|h_\varrho\|_{L^{1}(\Te)}$ are known to increase with the radius 
$\varrho$. So, it follows from \eqref{eq-isoper3.04} that
\begin{equation}
\int_{\D(0,r)}\frac{|\hfun(z)|^{2-q}}{1-|z|^2}\diff A(z)\le
\|h_r\|_{L^{1}(\Te)}^{2-q}\log\frac{1}{1-r^2}.
\label{eq-isoper3.05}
\end{equation}
Next, we assume $h$ is the Poisson extension to the disk $\D$ of a function 
in $L^q(\Te)$, which we also denote by $h$. Then, by Proposition 
\ref{prop-Green1}, we know that
\begin{equation}
\int_\D (1-|z|^2)\frac{|\partial h(z)|^2}{|h(z)|^{2-q}}
\diff A(z)
\le\frac{1}{q-1}\bigg\{\int_\Te|h|^q\diff s-\int_\D|h|^q\diff A\bigg\},
\label{eq-isoper3.06}
\end{equation}
and by inserting the estimates \eqref{eq-isoper3.05} and \eqref{eq-isoper3.06}
into \eqref{eq-isoper3.03}, we obtain that
\begin{equation}
\|(\partial \hfun)_r\|_{A^1(\D)}
\le\frac{1}{r^2}\|h\|_{L^1(\Te)}^{1-\frac{q}{2}}
\bigg\{\frac{1}{q-1}\bigg(\int_\Te|h|^q\diff s-\int_\D|h|^q\diff A\bigg)
\bigg\}^{1/2}
\sqrt{\log\frac{1}{1-r^2}}.
\label{eq-isoper3.07}
\end{equation}

We will refer to minus the differential entropy as the 
\emph{differential anentropy}, and as the area-$L^1$ norm of the dilatation of 
the gradient gets controlled in terms of this quantity, we name the result 
accordingly.   

\begin{thm}
{\rm(differential anentropy bound)}
Suppose $h:\D\to\R$ is the Poisson extension to the disk of a function 
in $L^p(\Te)$, for some $p$ with $1<p\le2$. The boundary function is also 
denoted by $h$. If $h\ge0$ on $\D$, and if $h(0)=1$, 
then
\begin{equation*}
\|(\partial \hfun)_r\|_{A^1(\D)}
\le\frac{1}{r^2}
\bigg\{\int_\Te h\log h\,\diff s\bigg\}^{1/2}
\sqrt{\log\frac{1}{1-r^2}},\qquad 0<r<1.
%\label{eq-isoper3.08}
\end{equation*}
\label{thm-entropy1}
\end{thm}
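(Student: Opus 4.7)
The plan is to derive the claimed bound directly from estimate \eqref{eq-isoper3.07} by letting $q \to 1^+$. To begin, I would use the hypotheses $h \ge 0$ and $h(0) = 1$ together with the mean value property, applied both to the circle and to the disk (since $h$ is harmonic), to get $\int_\Te h\,\diff s = \int_\D h\,\diff A = h(0) = 1$. The first identity collapses the prefactor $\|h\|_{L^1(\Te)}^{1-q/2}$ in \eqref{eq-isoper3.07} to $1$, while the second identifies the common limit of $\int_\Te h^q\,\diff s$ and $\int_\D h^q\,\diff A$ as $q \to 1^+$.

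Consequently, the factor
\[
\frac{1}{q-1}\bigg(\int_\Te h^q\,\diff s - \int_\D h^q\,\diff A\bigg)
\]
is a $0/0$ indeterminate form as $q \to 1^+$, which I would resolve by differentiating each integral in $q$ at $q = 1$. Dominated convergence yields derivatives $\int_\Te h \log h\,\diff s$ and $\int_\D h \log h\,\diff A$ respectively, so the limit of the quotient equals their difference. Next, Jensen's inequality applied to the convex function $x \mapsto x \log x$ on the probability space $(\D, \diff A)$, together with the mean value identity $\int_\D h\,\diff A = h(0) = 1$, gives $\int_\D h \log h\,\diff A \ge h(0) \log h(0) = 0$, so the difference is bounded above by $\int_\Te h \log h\,\diff s$. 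Substituting back into \eqref{eq-isoper3.07} and passing to the limit $q \to 1^+$ produces the claimed estimate.

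The main technical obstacle is justifying the differentiation under the integral sign as $q \to 1^+$; one must dominate $(h^q - h)/(q - 1)$ uniformly for $q$ in a right-neighborhood of $1$, both on $\Te$ and on $\D$. On the circle, the hypothesis $h \in L^p(\Te)$ with $p > 1$ furnishes such a dominant via the elementary estimate $|(x^q - x)/(q - 1)| \le C(x + x^p)$ valid for $q \in [1, p]$; on the disk the same dominant is inherited through subharmonicity of $h^p$, which yields $\int_\D h^p\,\diff A \le \int_\Te h^p\,\diff s < +\infty$. Since \eqref{eq-isoper3.07} is only an upper bound, one-sided control via Fatou's lemma is already enough to pass to the limit on the right-hand side.
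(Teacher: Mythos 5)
Your proposal is correct and follows essentially the same route as the paper: both plug the hypotheses into \eqref{eq-isoper3.07}, use $h(0)=1$ to collapse the $L^1(\Te)$ prefactor, and let $q\to1^+$ to turn the quotient $\frac{1}{q-1}(\int_\Te h^q\diff s-\int_\D h^q\diff A)$ into the entropy integral, the only cosmetic difference being that the paper discards the disk term before the limit (via the sub-mean-value property of $h^q$) while you take both limits and then discard $\int_\D h\log h\,\diff A\ge0$ by Jensen. The one small slip is the claimed dominant $|(x^q-x)/(q-1)|\le C(x+x^p)$ for $q$ up to $p$ (at $q=p$ one only gets $x^p\log x$); restricting to $q\le\frac{1+p}{2}$, which costs nothing since $q\to1^+$, repairs it.
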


\begin{proof}
The function
\[
F(t,q):=\frac{t^q-t}{q-1}, \qquad 0\le t<+\infty,\,\,\,1<q<2,
\]
is strictly decreasing as a function of $q$, with limit
\[
F(t,1):=\lim_{q\to1^+}F(t,q)=t\log t,
\] 
understood as $F(0,1)=0$ for $t=0$. Although $F(t,1)$ attains negative values
for $0<t<1$, it is easy to see that 
$F(t,q)\ge F(t,1)\ge -\e^{-1}$. Since it is given that $h\ge0$, we know that 
$|h|=h$, and by the subharmonicity of the function $h^q$, 
\[
\int_\D |h|^q\diff A=\int_\D h^q\diff A\ge h(0)^q=1,
\]
so that 
\begin{multline*}
\frac{1}{q-1}\bigg(\int_\Te|h|^q\diff s-\int_\D|h|^q\diff A\bigg)
\le\frac{1}{q-1}\bigg(\int_\Te h^q\diff s-1\bigg)
=\frac{1}{q-1}\int_\Te (h^q-h)\diff s=\int_\Te F(h,q)\diff s.
\end{multline*}
By the monotone convergence theorem applied to the positive functions 
$F(h,q)+\e^{-1}$, we see that
\[
\lim_{q\to1^+}\int_\Te F(h,q)\diff s=\int_\Te F(h,1)\diff s=\int_\Te h\log h\diff s
\] 
provided that $h\in L^p(\Te)$ for some $p$ with $1<p\le2$, so that the 
left-hand side limit is finite. By letting $q\to1^+$ in \eqref{eq-isoper3.07}, 
the claimed estimate follows.
\end{proof}

%\end{document}
\subsection{Applications of duality techniques to the 
dilates of Bloch functions} 
In view of Lemma \ref{lem-basic2}, combined with the equality 
\eqref{eq-duality1.001}, we have, for $\mu\in L^\infty(\D)$, 
$g:=\Pop\mu\in\mathcal{B}(\D)$, and a harmonic function $h$ on $\D$, which 
is the Poisson integral of an $L^1(\Te)$ function, also denoted by $h$, 
\[
\langle zg_r,h\rangle_\Te=\langle g,(\partial h)_r\rangle_\D
%=\langle g,(\partial h)_r\rangle_\D
=\langle\Pop\mu,(\partial h)_r\rangle_\D=\langle\mu,(\partial h)_r\rangle_\D,
\]
for $0<r<1$.
If, in addition, $h\ge0$ on $\D$, $h(0)=1$, and the boundary values are 
in $L^2(\Te)$, 
then Theorem \ref{thm-entropy1} gives that 
\begin{multline}
|\langle zg_r,h\rangle_\Te|=|\langle\mu,(\partial h)_r\rangle_\D|
\le\|\mu\|_{L^\infty(\D)}\|(\partial\hfun)_r\|_{A^1(\D)}
\\
\le\frac{\|\mu\|_{L^\infty(\D)}}{r^2}
\bigg\{\int_\Te h\log h\,\diff s\bigg\}^{1/2}
\sqrt{\log\frac{1}{1-r^2}},\qquad0<r<1.
\label{eq-isoper3.08}
\end{multline}

\begin{thm}
Suppose $g=\Pop\mu$, where $\mu\in L^\infty(\D)$, and consider, for
$0<r<1$ and $0\le\eta<+\infty$, the set
\[
\mathcal{E}(r,\eta):=\big\{\zeta\in\Te:\,\,\re(\zeta g(r\zeta))\ge\eta\big\}.
\]
Then $|\mathcal{E}(r,\eta)|_s$, the $s$-length of this set, enjoys the
bound
\[
|\mathcal{E}(r,\eta)|_s\le\exp\Bigg\{-\frac{r^4\eta^2}
{\|\mu\|_{L^\infty(\D)}^2\log\frac{1}{1-r^2}}\Bigg\}.
\]
\label{thm-weak1}
\end{thm}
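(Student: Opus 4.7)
The plan is to derive this weak-type (distributional) estimate by a duality argument, feeding the bound \eqref{eq-isoper3.08} with a carefully chosen positive harmonic test function $h$. The inequality \eqref{eq-isoper3.08} reads
\[
|\langle zg_r,h\rangle_\Te|\le\frac{\|\mu\|_{L^\infty(\D)}}{r^2}\bigg\{\int_\Te h\log h\,\diff s\bigg\}^{1/2}\sqrt{\log\frac{1}{1-r^2}},
\]
valid for any $h$ that is the Poisson extension of a nonnegative $L^2(\Te)$-function with $h(0)=1$. The whole proof is a one-line choice of $h$ combined with this inequality, plus the definition of $\mathcal{E}(r,\eta)$.

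Set $\lambda:=|\mathcal{E}(r,\eta)|_s$; if $\lambda=0$ the claim is vacuous, so assume $\lambda>0$. I would take as boundary datum
\[
h_b:=\lambda^{-1}\mathbf{1}_{\mathcal{E}(r,\eta)}\in L^2(\Te),
\]
and let $h$ be its Poisson extension to $\D$. Then $h\ge 0$ on $\D$, the mean value property gives $h(0)=\int_\Te h_b\diff s=1$, and one immediately computes
\[
\int_\Te h_b\log h_b\,\diff s=\lambda^{-1}\log(\lambda^{-1})\cdot\lambda=\log(1/\lambda).
\]

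Next, from the pointwise lower bound $\re(\zeta g(r\zeta))\ge\eta$ on $\mathcal{E}(r,\eta)$, we get
\[
\re\langle zg_r,h\rangle_\Te=\lambda^{-1}\int_{\mathcal{E}(r,\eta)}\re(\zeta g(r\zeta))\,\diff s(\zeta)\ge\eta,
\]
so in particular $|\langle zg_r,h\rangle_\Te|\ge\eta$. Plugging both computations into \eqref{eq-isoper3.08} gives
\[
\eta^2\le\frac{\|\mu\|_{L^\infty(\D)}^2}{r^4}\,\log(1/\lambda)\,\log\frac{1}{1-r^2},
\]
which upon rearrangement and exponentiation yields the claimed bound on $\lambda=|\mathcal{E}(r,\eta)|_s$.

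There is no real obstacle here, since the heavy lifting (the differential anentropy bound, Theorem \ref{thm-entropy1}, and the duality identity \eqref{eq-isoper3.08}) has already been carried out. The only point worth checking carefully is that the test function $h_b=\lambda^{-1}\mathbf{1}_{\mathcal{E}(r,\eta)}$ meets all the hypotheses of \eqref{eq-isoper3.08}: nonnegativity is clear, $h(0)=1$ is built in by the normalization, and $h_b\in L^2(\Te)$ since the indicator is bounded. The indicator may not look like the ``extremal'' Poisson integral one might hope for, but it is precisely the right choice to convert a distributional bound on $\re(\zeta g(r\zeta))$ into an $L\log L$-type quantity for the boundary data.
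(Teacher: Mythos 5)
Your proof is correct and is exactly the argument the paper has in mind (the paper's own proof is just a two-line sketch that names the same test function $h_b=|\mathcal{E}(r,\eta)|_s^{-1}\mathbf{1}_{\mathcal{E}(r,\eta)}$ and says the estimate is ``a direct consequence'' of \eqref{eq-isoper3.08}). You supply precisely the omitted computations --- that $\int_\Te h_b\log h_b\,\diff s=\log(1/\lambda)$, that $\re\langle zg_r,h\rangle_\Te\ge\eta$ since $h$ is real, and the final rearrangement --- and you correctly note that $h_b\in L^\infty(\Te)\subset L^2(\Te)$, so the hypotheses of \eqref{eq-isoper3.08} and the underlying Theorem \ref{thm-entropy1} are met.
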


\begin{proof}
We let $h$ be the Poisson extension of the boundary function which equals 
$1/|\mathcal{E}(r,\eta)|_s$ on $\mathcal{E}(r,\eta)$ and vanishes off 
$\mathcal{E}(r,\eta)$. Then $h\ge0$ on $\D$, and $h(0)=1$, and the boundary 
function is in $L^\infty(\Te)$, so we are in a position to apply 
\eqref{eq-isoper3.08}. As it turns out, the indicated estimate is a direct 
consequence of \eqref{eq-isoper3.08}. 
\end{proof}

%\end{document}

\begin{cor}
Suppose $g=\Pop\mu$, where $\mu\in L^\infty(\D)$, and consider, for
$0<r<1$ and $0\le\eta<+\infty$, the set
\[
\mathcal{E}_N(r,\eta):=\big\{\zeta\in\Te:\,\,\max_k\re[\omega^k\zeta g(r\zeta)]
\ge\eta\big\},
\]
where $\omega:=\e^{\imag 2\pi/N}\in\Te$ is a root of unity, for some integer 
$N=1,2,3,\ldots$.
Then the $s$-length of this set enjoys the
bound
\[
|\mathcal{E}_N(r,\eta)|_s\le N\exp\Bigg\{-\frac{r^4\eta^2}
{\|\mu\|_{L^\infty(\D)}^2\log\frac{1}{1-r^2}}\Bigg\}.
\]
\label{cor-weak1}
\end{cor}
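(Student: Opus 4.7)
\smallskip

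The plan is to reduce Corollary \ref{cor-weak1} to Theorem \ref{thm-weak1} by a union bound over the $N$ rotated directions, after observing that each rotated inequality is itself a direct instance of Theorem \ref{thm-weak1} applied to a rotated Bergman density.

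First I would decompose $\mathcal{E}_N(r,\eta)$ as the union
\[
\mathcal{E}_N(r,\eta)=\bigcup_{k=0}^{N-1}\mathcal{E}^{(k)}(r,\eta),\qquad
\mathcal{E}^{(k)}(r,\eta):=\bigl\{\zeta\in\Te:\,\re[\omega^k\zeta g(r\zeta)]\ge\eta\bigr\}.
\]
By subadditivity of the normalized arc length $\diff s$, it suffices to show that each of the $N$ sets $\mathcal{E}^{(k)}(r,\eta)$ obeys the same exponential bound as in Theorem \ref{thm-weak1}.

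Next I would observe that for every fixed $k$, the function $g_k:=\omega^k g$ is itself the Bergman projection of a bounded function: linearity of $\Pop$ gives $g_k=\Pop\mu_k$ with $\mu_k:=\omega^k\mu\in L^\infty(\D)$, and clearly $\|\mu_k\|_{L^\infty(\D)}=\|\mu\|_{L^\infty(\D)}$ since $|\omega^k|=1$. Rewriting
\[
\mathcal{E}^{(k)}(r,\eta)=\bigl\{\zeta\in\Te:\,\re[\zeta\, g_k(r\zeta)]\ge\eta\bigr\}
\]
puts each $\mathcal{E}^{(k)}(r,\eta)$ into exactly the form to which Theorem \ref{thm-weak1} applies. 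That theorem then yields
\[
|\mathcal{E}^{(k)}(r,\eta)|_s\le\exp\!\Bigg\{-\frac{r^4\eta^2}{\|\mu_k\|_{L^\infty(\D)}^2\log\frac{1}{1-r^2}}\Bigg\}
=\exp\!\Bigg\{-\frac{r^4\eta^2}{\|\mu\|_{L^\infty(\D)}^2\log\frac{1}{1-r^2}}\Bigg\}.
\]

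Finally, summing these $N$ identical bounds gives the claim:
\[
|\mathcal{E}_N(r,\eta)|_s\le\sum_{k=0}^{N-1}|\mathcal{E}^{(k)}(r,\eta)|_s\le N\exp\!\Bigg\{-\frac{r^4\eta^2}{\|\mu\|_{L^\infty(\D)}^2\log\frac{1}{1-r^2}}\Bigg\}.
\]
There is no real obstacle here; the only point worth flagging is the closure of the unit ball of $\Pop L^\infty(\D)$ under rotation of the density $\mu\mapsto\omega^k\mu$, which is what lets us reuse Theorem \ref{thm-weak1} verbatim in each of the $N$ directions rather than having to rework its proof.
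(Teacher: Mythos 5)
Your proposal is correct and is essentially identical to the paper's own proof: the paper likewise splits $\mathcal{E}_N(r,\eta)$ into $N$ sets and applies Theorem \ref{thm-weak1} to each. Your explicit observation that $\omega^k g=\Pop(\omega^k\mu)$ with $\|\omega^k\mu\|_{L^\infty(\D)}=\|\mu\|_{L^\infty(\D)}$ just fills in the detail the paper leaves implicit.
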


\begin{proof}
The assertion is immediate from Theorem \ref{thm-weak1}, since the set
$\mathcal{E}_N(r,\eta)$ may be split as the union of $N$ sets, each of
which may be estimated using Theorem \ref{thm-weak1}.
\end{proof}

\begin{lem}
For $0<r<1$ and $0\le\eta<+\infty$, the set 
\[
\mathcal{F}(r,\eta):=\big\{\zeta\in\Te:\,\,|g(r\zeta)|\ge\eta\big\}
\]
is contained in $\mathcal{E}_N(r,\eta')$, provided $N\ge3$ and 
$\eta'=\eta\cos\frac{\pi}{N}$.
\label{lem-ball:polygon}
\end{lem}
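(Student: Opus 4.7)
The plan is to reduce the inclusion to a purely pointwise geometric fact about complex numbers and the $N$-th roots of unity. Fix $\zeta\in\mathcal{F}(r,\eta)$ and write $w:=\zeta g(r\zeta)$. Since $|\zeta|=1$, we have $|w|=|g(r\zeta)|\ge\eta$, so the entire task reduces to showing that
\[
\max_{0\le k\le N-1}\re[\omega^k w]\ge |w|\cos\tfrac{\pi}{N}.
\]
If this inequality holds then $\max_k\re[\omega^k\zeta g(r\zeta)]\ge\eta\cos\frac{\pi}{N}=\eta'$, which is exactly the condition $\zeta\in\mathcal{E}_N(r,\eta')$.

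To establish the pointwise inequality, I would argue as follows. For $w\ne 0$, write $w=|w|\e^{\imag\theta}$. Then $\re[\omega^k w]=|w|\cos(\theta+2\pi k/N)$. As $k$ runs through $0,1,\ldots,N-1$, the angles $\theta+2\pi k/N$ form an arithmetic progression with spacing $2\pi/N$, so at least one representative lies in $[-\pi/N,\pi/N]$ modulo $2\pi$; for that value of $k$ we have $\cos(\theta+2\pi k/N)\ge\cos(\pi/N)$, since cosine is decreasing on $[0,\pi]$ and even. This gives the claim (the case $w=0$ is trivial, and nothing uses $N\ge3$ except to make $\cos(\pi/N)>0$, which is the natural regime for the corollary that follows).

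There is essentially no obstacle here; the statement is a clean geometric fact dressed up in the notation of the paper. The only thing to watch is the role of the factor $\zeta$: it is crucial that the defining inequality for $\mathcal{E}_N$ includes the $\zeta$ multiplier, so that taking maxima over rotations by $N$-th roots of unity captures the modulus $|g(r\zeta)|$ (up to the factor $\cos(\pi/N)$) independently of the argument of $g(r\zeta)$. Once that observation is made, the proof is a single line.
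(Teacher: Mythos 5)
Your argument is correct and is precisely the geometric fact the paper invokes with its one-line proof ("the inscription of a regular polygon with $N$ edges inside a circle"): among the $N$ equally spaced angles $\theta + 2\pi k/N$, one falls within $\pi/N$ of $0$ modulo $2\pi$, whence $\cos(\theta+2\pi k/N)\ge\cos(\pi/N)$. You have simply spelled out what the paper leaves implicit, including the correct observation that $N\ge3$ only serves to keep $\cos(\pi/N)$ positive.
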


\begin{proof}
This follows from a geometric consideration which involves the inscription of
a regular polygon with $N$ edges inside a circle.
\end{proof}

\begin{cor}
Suppose $g=\Pop\mu$, where $\mu\in L^\infty(\D)$, and consider, for
$0<r<1$ and $0\le\eta<+\infty$, the set
\[
\mathcal{F}(r,\eta):=\big\{\zeta\in\Te:\,\,|g(r\zeta)|
\ge\eta\big\}.
\]
Then the $s$-length of this set enjoys the
bound
\[
|\mathcal{F}(r,\eta)|_s\le \min_{N\ge3}\,
N\exp\Bigg\{-\frac{r^4\eta^2\cos^2\frac{\pi}{N}}
{\|\mu\|_{L^\infty(\D)}^2\log\frac{1}{1-r^2}}\Bigg\}.
\]
\label{cor-weak2}
\end{cor}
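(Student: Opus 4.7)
The plan is straightforward: this corollary is a direct consequence of the two preceding results, Lemma \ref{lem-ball:polygon} and Corollary \ref{cor-weak1}, chained together and optimized over the parameter $N$.

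First I would fix $N\ge 3$ and set $\eta':=\eta\cos\frac{\pi}{N}$. Lemma \ref{lem-ball:polygon} gives the set-theoretic inclusion $\mathcal{F}(r,\eta)\subset\mathcal{E}_N(r,\eta')$, so by monotonicity of arc length measure we immediately get
\[
|\mathcal{F}(r,\eta)|_s\le |\mathcal{E}_N(r,\eta\cos\tfrac{\pi}{N})|_s.
\]
Next I would apply Corollary \ref{cor-weak1} to the right-hand side with the shifted threshold $\eta'=\eta\cos\frac{\pi}{N}$, which produces
\[
|\mathcal{F}(r,\eta)|_s\le N\exp\Bigg\{-\frac{r^4\eta^2\cos^2\frac{\pi}{N}}{\|\mu\|_{L^\infty(\D)}^2\log\frac{1}{1-r^2}}\Bigg\}.
\]
Since this estimate holds for every integer $N\ge 3$, taking the infimum (minimum) over all such $N$ yields the claimed bound.

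There is no real obstacle here; both inputs (the polygon inscription lemma and the multi-rotation weak-type corollary) have already been established. The only genuine content is recognizing that the level set $\{|g(r\zeta)|\ge\eta\}$ on $\Te$ is captured, up to a $\cos\frac{\pi}{N}$ factor in the threshold, by a union of $N$ rotated half-plane sets of the form $\{\re[\omega^k\zeta g(r\zeta)]\ge\eta'\}$, and that the extra multiplicative factor $N$ in front trades off against the improvement $\cos^2\frac{\pi}{N}\to 1$ in the exponent as $N\to\infty$. Writing the bound as a minimum over $N\ge 3$ simply records that one is free to make this tradeoff optimally.
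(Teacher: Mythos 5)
Your proof is correct and follows exactly the route the paper takes: apply Lemma \ref{lem-ball:polygon} to obtain the inclusion $\mathcal{F}(r,\eta)\subset\mathcal{E}_N(r,\eta\cos\frac{\pi}{N})$, then invoke Corollary \ref{cor-weak1}, and finally minimize over $N\ge 3$.
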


\begin{proof}
The assertion is an immediate consequence of Corollary \ref{cor-weak1} together
with Lemma \ref{lem-ball:polygon}. 
\end{proof}

In its turn, this then leads to the following result, which constitutes part
(a) of Theorem \ref{thm-main}.
%has an interpretation as approximately the $L^{1,\infty}(\Te)$ norm 
%of a related function.

\begin{cor}
Suppose $g=\Pop\mu$, where $\mu\in L^\infty(\D)$, and $\|\mu\|_{L^\infty(\D)}\le1$.
Suppose that $0<a<1$.
%, and let $N\ge3$ be an integer so large that $a<\cos^2\frac{\pi}{N}$. 
We
%, for $0<r<1$, 
then have the estimate
\[
%\mathcal{A}(r,\lambda):=\bigg\{\zeta\in\Te:\,\,
%\exp\bigg(\frac{|g(r\zeta)|^2}{\|\mu\|_{L^\infty(\D)}^2
%\log\frac{1}{1-r^2}}\bigg)\ge\lambda\bigg\}.
\int_\Te\exp\Bigg\{a\frac{r^4|g(r\zeta)|^2}
{\log\frac{1}{1-r^2}}\Bigg\}\diff s(\zeta)\le\frac{10}{(1-a)^{3/2}},\qquad
0<r<1.
\]
%Then the $s$-length of this set enjoys the
%bound
%\[
%|\mathcal{A}(r,\lambda)|_s\le \min_{N\ge3}\,
%N\,\lambda^{-r^4\cos^2\frac{\pi}{N}}.
%\]
\label{cor-strong1}
\end{cor}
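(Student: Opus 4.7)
The plan is to combine the weak-type distribution bound of Corollary \ref{cor-weak2} with a layer-cake (distribution function) computation of the exponential integral, and then optimize over the parameter $N$.

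First, set $L := \log\frac{1}{1-r^2}$ and $\Phi(\zeta) := a r^4 |g(r\zeta)|^2/L$, so that the integral we want to bound is $\int_\Te e^\Phi \diff s$. By the standard layer-cake identity applied to the nonnegative function $\Phi$,
\[
\int_\Te e^\Phi \diff s = 1 + \int_0^\infty e^t \bigl|\{\zeta\in\Te:\Phi(\zeta)>t\}\bigr|_s\, \diff t.
\]
The condition $\Phi(\zeta)>t$ translates into $|g(r\zeta)|>\eta$ for $\eta=\sqrt{tL/(ar^4)}$, which is exactly the set $\mathcal{F}(r,\eta)$ from Corollary \ref{cor-weak2}. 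Since $\|\mu\|_{L^\infty(\D)}\le 1$, that corollary gives, for each integer $N\ge 3$,
\[
\bigl|\mathcal{F}(r,\eta)\bigr|_s \le N\exp\!\Bigl\{-\tfrac{r^4\eta^2\cos^2(\pi/N)}{L}\Bigr\}
= N\exp\!\bigl\{-t\cos^2(\pi/N)/a\bigr\}.
\]
Plugging this back in yields the estimate
\[
\int_\Te e^\Phi \diff s \le 1 + \int_0^\infty N\,e^{-t(\cos^2(\pi/N)/a-1)}\diff t
= 1 + \frac{aN}{\cos^2(\pi/N)-a},
\]
valid provided $\cos^2(\pi/N)>a$, which can always be arranged when $0<a<1$ by taking $N$ large.

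The second step is to optimize the choice of $N$ as $a\to 1^-$. Using the crude inequality $\cos^2(\pi/N)\ge 1-\pi^2/N^2$, a choice of the form $N\approx c\pi/\sqrt{1-a}$ with $c>1$ makes the denominator $\cos^2(\pi/N)-a$ comparable to $(1-a)(1-c^{-2})$, and hence the main term $aN/(\cos^2(\pi/N)-a)$ becomes of order $c^3/(c^2-1) \cdot a\pi(1-a)^{-3/2}$. Minimizing $c^3/(c^2-1)$ over $c>1$ gives the optimum $c=\sqrt{3}$, yielding a leading constant roughly $\tfrac{3\sqrt{3}}{2}\pi\approx 8.16$, well within the $10$ we need.

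The last step is a careful bookkeeping argument: with $N:=\lceil\sqrt{3}\pi/\sqrt{1-a}\rceil$ (or $N=3$ when $a$ is small enough that $\cos^2(\pi/3)=1/4$ already beats $a$), one checks that $1+aN/(\cos^2(\pi/N)-a)\le 10(1-a)^{-3/2}$ uniformly on $0<a<1$. I expect the main (and only) obstacle to be this final numerical verification: there is some unpleasantness near $a=\tfrac14$ where one transitions between small-$N$ and large-$N$ regimes, and one must absorb the ceiling error together with the $+1$ from the layer-cake formula into the constant; splitting into two or three ranges of $a$ and using the elementary bound $\cos^2(\pi/N)\ge 1-\pi^2/N^2$ (perhaps refined by the better bound $\cos^2(\pi/N)\ge 1-\pi^2/N^2 + O(N^{-4})$ if necessary) should suffice.
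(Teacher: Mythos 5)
Your proposal is essentially the paper's own proof: the layer-cake identity you use is the same thing as the paper's integration by parts against $\nu_r(\eta)=|\mathcal{F}(r,\eta)|_s$, it yields the same intermediate bound $1+aN/(\cos^2(\pi/N)-a)$, and your optimization $N\approx\sqrt3\,\pi/\sqrt{1-a}$ is exactly the paper's choice of $N$ as the smallest integer $\ge\pi\sqrt3\,(1-a)^{-1/2}$. Your worry about a small-$a$/large-$N$ transition near $a=\tfrac14$ is unfounded: with that choice one always has $N\ge 6$ (since $\pi\sqrt3>5$), and then $\cos^2(\pi/N)\ge1-\pi^2/N^2\ge 1-(1-a)/3$ gives $\cos^2(\pi/N)-a\ge\tfrac23(1-a)$ uniformly, while $N\le(1+\pi\sqrt3)(1-a)^{-1/2}$; feeding these into $1+aN/(\cos^2(\pi/N)-a)$ gives the stated $10(1-a)^{-3/2}$ in one stroke, with no case split needed.
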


%\end{document}
\begin{proof}
Let $\nu_r$ be the function defined by
\[
\nu_r(\eta):=|\mathcal{F}(r,\eta)|_s,\qquad
0\le\eta<+\infty,
\]
where the set $\mathcal{F}(r,\eta)$ is as in Lemma \ref{lem-ball:polygon}.
The function $\eta\mapsto\nu_r(\eta)$ is then a decreasing function which takes 
values in the interval $[0,1]$. We realize that
\[
\int_\Te\exp\Bigg\{a\frac{r^4|g(r\zeta)|^2}
{\log\frac{1}{1-r^2}}\Bigg\}\diff s(\zeta)=-\int_0^{+\infty}
\exp\Bigg\{a\frac{r^4\eta^2}
{\log\frac{1}{1-r^2}}\Bigg\}\diff \nu_r(\eta),
\]
and an application of integration by parts together with the estimate of 
Corollary \ref{cor-weak2} (for big enough $N$) shows that
\begin{equation}
\int_\Te\exp\Bigg\{a\frac{r^4|g(r\zeta)|^2}
{\log\frac{1}{1-r^2}}\Bigg\}\diff s(\zeta)=1+\int_0^{+\infty}
\exp\Bigg\{a\frac{r^4\eta^2}
{\log\frac{1}{1-r^2}}\Bigg\}\frac{2a r^4\eta}{\log\frac{1}{1-r^2}}
\nu_r(\eta)\diff\eta.
\label{eq-IBP1.1}
\end{equation}
In this step, we already used that $0<a<1$. Next, we let $N\ge3$ be an integer
so big that $0<a<\cos^2\frac{\pi}{N}$ holds. The estimate of Corollary 
\ref{cor-weak2} applied to \eqref{eq-IBP1.1} leads to
\begin{multline}
\int_\Te\exp\Bigg\{a\frac{r^4|g(r\zeta)|^2}
{\log\frac{1}{1-r^2}}\Bigg\}\diff s(\zeta)
\\
\le1+N\int_0^{+\infty}
\exp\Bigg\{-(\cos^2\tfrac{\pi}{N}-a)\frac{r^4\eta^2}
{\log\frac{1}{1-r^2}}\Bigg\}\frac{2a r^4\eta}{\log\frac{1}{1-r^2}}
\diff\eta=1+\frac{aN}{\cos^2\frac{\pi}{N}-a}.
\label{eq-IBP1.2}
\end{multline}
It remains to choose $N$. We pick $N$ to be \emph{the smallest integer with} 
\[
N\ge \frac{\pi\sqrt{3}}{(1-a)^{1/2}};
\]
then automatically, $N>5$, and 
\[
\cos^2\frac{\pi}{N}-a>1-a-\frac{\pi^2}{N^2}\ge1-a-\frac{1-a}{3}=\frac23(1-a).
\]
At the same time, we have that
\[
N\le1+\frac{\pi\sqrt{3}}{(1-a)^{1/2}}\le\frac{1+\pi\sqrt{3}}{(1-a)^{1/2}},
\]
and a combination with the above estimate shows that
\[
1+\frac{aN}{\cos^2\frac{\pi}{N}-a}\le 1+\frac{\pi\sqrt{3}+1}{2/3}\,
\frac{a}{(1-a)^{3/2}}\le 1+\frac{9a}{(1-a)^{3/2}}\le\frac{10}{(1-a)^{3/2}}.
\]
The assertion of the corollary now follows from the estimate 
\eqref{eq-IBP1.2}.  
\end{proof}

\begin{rem}
In particular, Corollary \ref{cor-strong1} shows that $\mathrm{atvar}\,\Pop\mu
\le\|\mu\|_{L^\infty(\D)}^2$ for functions $\mu\in L^\infty(\D)$.
\end{rem}

\subsection{The control of the moments of a Bloch function}
\label{subsec-moments}

We begin with the following easy lemma.

\begin{lem}
For $0<s<+\infty$, we have the inequality
\[
y^s\le s^s\e^{-s+y},\qquad 0\le y<+\infty.
\]
\end{lem}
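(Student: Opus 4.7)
The plan is to reduce the claim to the elementary inequality $\log u\le u-1$, valid for $u>0$.

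First, I would observe that the inequality is trivial at $y=0$, since then $y^s=0\le s^s\e^{-s}$. For $y>0$, both sides are strictly positive, so the inequality is equivalent (by taking logarithms and rearranging) to
\[
s\log y-y\le s\log s-s.
\]
Setting $u:=y/s$ and dividing by $s$, this becomes $\log u\le u-1$, which holds for all $u>0$ with equality at $u=1$.

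An alternative, equivalent route is to analyze the function $f(y):=s\log y-y$ on $\R_+$ directly: one checks that $f'(y)=s/y-1$ vanishes only at $y=s$, that $f''(y)=-s/y^2<0$, and hence that $f$ attains its global maximum on $\R_+$ at $y=s$, with value $f(s)=s\log s-s$. Exponentiating the inequality $f(y)\le f(s)$ yields the claim.

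There is no real obstacle here; the only mild point is to handle the boundary case $y=0$ separately (since the logarithm is undefined there), which is immediate.
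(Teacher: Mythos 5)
Your proof is correct. The paper itself omits the proof, dismissing it as ``a calculus exercise''; both of your routes (reduction to $\log u\le u-1$ via $u=y/s$, or direct optimization of $y\mapsto s\log y-y$) are exactly the kind of elementary argument the author has in mind, and your separate treatment of the boundary case $y=0$ is appropriate.
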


The proof is a calculus exercise and therefore omitted.

\begin{proof}[Proof of Corollary \ref{cor-moments}]
Without loss of generality, we may assume $\|\mu\|_{L^\infty(\D)}=1$.
We apply the above lemma with $s=q/2$ and 
\[
y=ar^4\frac{|g(r\zeta)|^2}{\log\frac{1}{1-r^2}},
\]
where $0<a<1$, and obtain
\[
a^{q/2}r^{2q}\frac{|g(r\zeta)|^q}{\big(\log\frac{1}{1-r^2}\big)^{q/2}}
\le (q/2)^{q/2}\e^{-q/2}
\exp\Bigg\{ar^4\frac{|g(r\zeta)|^2}{\log\frac{1}{1-r^2}}\Bigg\}.
\]
After integration along the circle $\Te$ in the $\zeta$ variable, 
we obtain 
\begin{multline*}
\int_\Te|g(r\zeta)|^q\diff s(\zeta)
\le \bigg(\frac{q}{2\e ar^4}\bigg)^{q/2}\bigg(\log\frac{1}{1-r^2}\bigg)^{q/2}
\int_\Te\exp\Bigg\{ar^4\frac{|g(r\zeta)|^2}{\log\frac{1}{1-r^2}}\Bigg\}
\diff s(\zeta)
\\
\le\frac{10}{a^{q/2}(1-a)^{3/2}}\,\bigg(\frac{q}{2\e r^4}\bigg)^{q/2}
\bigg(\log\frac{1}{1-r^2}\bigg)^{q/2},
\end{multline*}
where in the last inequality we implemented the estimate of Theorem 
\ref{thm-main}. We are free to pick $0<a<1$, and with the choice 
$a:=q/(q+3)$, we arrive at
\begin{equation*}
\int_\Te|g(r\zeta)|^q\diff s(\zeta)
\le 10(\e/3)^{3/2}(3+q)^{3/2}\,\bigg(\frac{q}{2\e}\bigg)^{q/2}
\bigg(\frac{1}{r^4}\log\frac{1}{1-r^2}\bigg)^{q/2},
\end{equation*}
which gives the claimed estimate, since $\e<3$. 
\end{proof} 

%\end{document}
\section{Conformal and quasiconformal mapping}

\subsection{Conformal mappings: the standard classes 
$\classS$ and $\Sigma$}
It is a central theme in Conformal Mapping to analyze the local 
dilation/contraction/rotation of the mapping in question. To be more specific,
we introduce the standard class $\classS$ of univalent functions 
$\varphi:\D\to\C$ subject to the normalizations $\varphi(0)=0$ and 
$\varphi'(0)=1$. We consider the function $h_\varphi(z):=\log\varphi'(z)$, 
which may be referred to as the \emph{local complex distortion exponent}. 
A classical estimate of $h_\varphi$ (due to Koebe and Bieberbach) is the
inequality
\begin{equation}
\big|(1-|z|^2)h'_\varphi(z)-2\bar z\big|=\bigg|(1-|z|^2)
\frac{\varphi''(z)}{\varphi'(z)}-2\bar z\bigg|\le4,\qquad z\in\D.
\label{eq-Koebeest1}
\end{equation}
In particular, $h_\varphi$ is in the Bloch space, with seminorm estimate
$\|h_\varphi\|_{\mathcal{B}(\D)}\le6$. On the other hand, \emph{Becker's univalence
criterion} asserts that if $\varphi:\D\to\C$ is a function which is locally 
univalent, that is, $\varphi'(z)\ne0$ for all $z\in\D$, and, in addition,
$\|h_\varphi\|_{\mathcal{B}(\D)}\le1$, then $\varphi$ is necessarily univalent.
Moreover, the bound $1$ which appears here is best possible 
(see \cite{Bec}, \cite{BecPom}). It seems that we are in a situation somewhat
analogous to the what we found for $\Pop L^\infty(\D)$ in Subsection 
\ref{subsec-PLinfty}: the set 
\[
h_\classS:=\{h_\varphi:\,\varphi\in\classS\}
\]
is contained in $6$ times the unit ball of $\mathcal{B}(\D)$, and every 
element $g$ in the unit ball of $\mathcal{B}(\D)$ with $g(0)=0$ is in
$h_\classS$. One minor difference is that we cannot expect $h_\classS$ to share
the properties of a unit ball (convexity etc). 
The behavior of $h_\varphi=\log\varphi'$ may acquire additional boundary growth
if the image domain $\varphi(\D)$ is unbounded, because the derivative 
$\varphi'$ is taken with respect to the Euclidean structure in the image
$\varphi(\D)\subset\C$. To avoid taking such effects into consideration, we
can pass to the univalent function $\psi:\D_e\to\C_\infty$ given
by 
\begin{equation}
\psi(\zeta):=\frac{1}{\varphi(1/\zeta)},\qquad\zeta\in\D_e,
\label{eq-psi1}
\end{equation}
which has $\psi(\zeta)=\zeta+\Ordo(1)$ as $\zeta\to\infty$ and hence is
element of the class $\Sigma$. As for $\psi$, we know that the complement
of the image domain $\psi(\D_e)$ is a compact continuum which does not
divide the plane, contains the origin, and has diameter at most $4$. The 
derivative of $\psi$ evaluated at the point $1/z$ equals
\begin{equation}
\psi'(1/z)=\frac{z^2\varphi'(z)}{[\varphi(z)]^2},\qquad z\in\D,
\label{eq-psiprim}
\end{equation}
which encourages us to replace the study of $h_\varphi$ by the study of
\begin{equation}
g_\varphi(z):=\log\frac{z^2\varphi'(z)}{[\varphi(z)]^2}
=\log\psi'\bigg(\frac{1}{z}\bigg)=h_\psi\bigg(\frac{1}{z}\bigg),
\qquad z\in\D.
\label{eq-gphi}
\end{equation}
The optimal pointwise estimate for the local complex distortion exponent 
$h_\psi(\zeta)=\log\psi'(\zeta)$ is (see \cite{Dur}, p. 123)
\begin{equation*}
|h_\psi(\zeta)|=\big|\log\psi'(\zeta)\big|\le\log
\frac{|\zeta|^2}{|\zeta|^2-1},\qquad \zeta\in\D_e,
\end{equation*}
which in terms of the function $g_\varphi(z)=h_\psi(1/z)$ reads 
\begin{equation}
|g_\varphi(z)|=|h_\psi(1/z)|=
\bigg|\log\frac{z^2\varphi'(z)}{[\varphi(z)]^2}\bigg|\le
\log\frac{1}{1-|z|^2},\qquad z\in\D.
\label{eq-ptwiseS1}
\end{equation}
%which may be compared with the bound \eqref{eq-pointwise1}.

%\end{document}
\subsection{Goluzin's inequality for the class $\Sigma$}
There is an analogue of \eqref{eq-Koebeest1} found by Goluzin in 1943 
(see \cite{Goluzin}, p. 132, as well as \cite{AbuHed}) which applies to 
the class $\Sigma$, but contrary to first expectations, the estimate is not
essentially better than for the class $\classS$. 
Goluzin's inequality, which is sharp pointwise, reads as follows:
\begin{equation}
\bigg|\zeta h_\psi'(\zeta)
%\frac{\psi''(\zeta)}{\psi'(\zeta)}
+\frac{4|\zeta|^2-2}{|\zeta|^2-1}
-\frac{4|\zeta|^2}{|\zeta|^2-1}\frac{E(1/|\zeta|)}{K(1/|\zeta|)}\bigg|
\le\frac{4|\zeta|^2}{|\zeta|^2-1}
\bigg(1-\frac{E(1/|\zeta|)}{K(1/|\zeta|)}\bigg),
\qquad\zeta\in\D_e,
\label{eq-Goluzin0}
\end{equation}
where $h_\psi(\zeta)=\log\psi'(\zeta)$, and $E$ and $K$ denote the 
elliptic integrals
\[
E(s):=\int_0^1\sqrt{\frac{1-s^2t^2}{1-t^2}}\diff t,\qquad 0\le s\le 1,
\]
and
\[
K(s):=\int_0^1\frac{\diff t}{\sqrt{(1-s^2t^2)(1-t^2)}},\qquad 0\le s< 1.
\]
The ratio $E(s)/K(s)$ tends to $0$ as $s\to1^-$, and it is elementary 
to obtain the estimates
\begin{equation*}
1-s^2\le\frac{E(s)}{K(s)}\le1,\qquad 0\le s<1;
%\label{eq-simple1.1}
\end{equation*}
as a consequence, we have that
\begin{equation}
0\le\frac{4|\zeta|^2}{|\zeta|^2-1} 
\bigg(1-\frac{E(1/|\zeta|)}{K(1/|\zeta|)}\bigg)\le \frac{4}{|\zeta|^2-1},
\qquad\zeta\in\D_e,
\label{eq-simple1.2}
\end{equation}
and
\begin{equation}
-\frac{2}{|\zeta|^2-1}\le\frac{4|\zeta|^2-2}{|\zeta|^2-1}
-\frac{4|\zeta|^2}{|\zeta|^2-1}\frac{E(1/|\zeta|)}{K(1/|\zeta|)}
\le\frac{2}{|\zeta|^2-1},\qquad \zeta\in\D_e.
\label{eq-simple1.3}
\end{equation}
%$E(s)\to1$ and $K(s)\to+\infty$ as $s\to1^-$, we have the limit
%\[
%\lim_{|\zeta|\to1^+}\frac{E(1/|\zeta|)}{K(1/|\zeta|)}=0,
%\]
By inserting the estimates \eqref{eq-simple1.2} and \eqref{eq-simple1.3} 
into Goluzin's inequality \eqref{eq-Goluzin0}, we arrive at
\begin{equation}
|\zeta h_\psi'(\zeta)|\le\frac{6}{|\zeta|^2-1},\qquad \zeta\in\D_e,
\label{eq-simple1.4}
\end{equation}
which in terms of the function $g_\varphi(z)=h_\psi(1/z)$ in \eqref{eq-gphi}
reads
\begin{equation}
(1-|z|^2)|g_\varphi'(z)|\le 6|z|,\qquad z\in\D.
\label{eq-simple1.5}
\end{equation}

\begin{prop}
%If, as in the proof of Proposition \ref{prop-2.1.3}, we put
Let $\nu_\varphi\in L^\infty(\D)$ be the function
\[
\nu_\varphi(z):=(1-|z|^2)\frac{g_\varphi'(z)}{z},\qquad z\in\D.
\]
Then 
%\eqref{eq-simple1.5} gives that 
$\|\nu_\varphi\|_{L^\infty(\D)}\le6$, and
%the calculation \eqref{eq-direct1.01} shows that 
$z^2\Pop\nu_\varphi(z)=g_\varphi(z)$.
\label{prop-muphi1.1}
\end{prop}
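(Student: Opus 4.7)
The plan is to read off both assertions directly from work already done in the excerpt, with only a small verification needed for the $L^\infty$ bound and for the vanishing of the relevant Taylor coefficients of $g_\varphi$ at the origin.

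First I would handle the norm bound. Goluzin's inequality, in the form \eqref{eq-simple1.5}, gives us $(1-|z|^2)|g_\varphi'(z)|\le 6|z|$ for every $z\in\D$. Dividing both sides by $|z|$ produces
\[
|\nu_\varphi(z)|=(1-|z|^2)\frac{|g_\varphi'(z)|}{|z|}\le 6,\qquad z\in\D,
\]
and since $g_\varphi'$ is holomorphic, $\nu_\varphi$ extends continuously through $z=0$ (as will be explicitly visible from the next step). Hence $\|\nu_\varphi\|_{L^\infty(\D)}\le 6$.

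For the identity $z^2\Pop\nu_\varphi(z)=g_\varphi(z)$, I would reuse the computation carried out in the proof of Proposition \ref{prop-2.1.3}, namely
\[
z^2\int_\D\frac{(1-|w|^2)\bigl(g'(w)-g'(0)\bigr)}{w(1-z\bar w)^2}\diff A(w)
=g(z)-g(0)-g'(0)z,
\]
valid for any $g\in\mathcal{B}(\D)$; see \eqref{eq-direct1.01}. Applied to $g=g_\varphi$, the left-hand side becomes $z^2\Pop\nu_\varphi(z)$ provided $g_\varphi'(0)=0$, and the right-hand side simplifies to $g_\varphi(z)$ provided also $g_\varphi(0)=0$. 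So the whole matter reduces to verifying these two vanishings.

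Both facts follow from the asymptotic normalisation of $\psi\in\Sigma$ at infinity. Since $\psi(\zeta)=\zeta+\Ordo(1)$ as $\zeta\to\infty$, the Laurent expansion reads $\psi(\zeta)=\zeta+b_0+b_1\zeta^{-1}+b_2\zeta^{-2}+\cdots$, whence $\psi'(\zeta)=1-b_1\zeta^{-2}-2b_2\zeta^{-3}-\cdots$, giving $\log\psi'(\zeta)=\Ordo(\zeta^{-2})$ as $\zeta\to\infty$. Substituting $\zeta=1/z$ and invoking \eqref{eq-gphi}, we obtain
\[
g_\varphi(z)=\log\psi'(1/z)=\Ordo(z^2)\qquad\text{as }z\to0,
\]
so $g_\varphi(0)=0$ and $g_\varphi'(0)=0$. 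With these in hand, \eqref{eq-direct1.01} delivers $z^2\Pop\nu_\varphi(z)=g_\varphi(z)$, completing the proof. There is no real obstacle here; the only place where care is needed is in confirming the $\Ordo(z^2)$ behaviour of $g_\varphi$ at the origin (equivalently, the $\Ordo(\zeta^{-2})$ behaviour of $\log\psi'$ at infinity), which is a direct consequence of the normalisation defining $\Sigma$.
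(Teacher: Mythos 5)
Your proof is correct and follows the same route as the paper: the norm bound is read off from Goluzin's consequence \eqref{eq-simple1.5}, and the identity $z^2\Pop\nu_\varphi=g_\varphi$ comes from the computation \eqref{eq-direct1.01} once $g_\varphi(0)=g_\varphi'(0)=0$ is checked. Your verification of the $\Ordo(z^2)$ vanishing at the origin via the Laurent expansion of $\psi'$ at infinity is a valid (and slightly more explicit) substitute for the paper's appeal to the pointwise bound \eqref{eq-ptwiseS1}.
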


\begin{proof}
The norm estimate follows from \eqref{eq-simple1.5}, and the equality
$z^2\Pop\nu_\varphi(z)=g_\varphi(z)$ is as in the calculation 
\eqref{eq-direct1.01}, since $g(0)=g'(0)=0$ holds (compare, e.g., with
the estimate \eqref{eq-ptwiseS1}).
\end{proof}

%\end{document}
\subsection{Holomorphic motion, Beltrami equations, and 
quasiconformal extensions}

We use the standard terminology of quasiconformal theory. So, for instance, 
if $\varphi:\Omega_1\to\Omega_2$ is a homeomorphism of complex domains, and if
$k$ is a real parameter with $0\le k<1$, then $\varphi$ is said to be
\emph{$k$-quasiconformal} if it of the Sobolev class 
$W^{1,2}$ locally, and enjoys the dilatation estimate
\[
|\bar\partial_z\varphi(z)|\le k|\partial_z\varphi(z)|,\qquad z\in\Omega_1,
\] 
in the almost-everywhere sense. 
We will also need the notion of \emph{holomorphic motion} (see \cite{MSS}, 
\cite{Slo}, and the recent book \cite{AIM}).

%Before we carry on with holomorphic motion and Beltrami equations, we need
%to see what the strong form of Marshall's Conjecture \ref{conj-Marshall}
%entails for the class $\Sigma$ of normalized conformal mappings on $\D_e$.
We recall that $\psi\in\Sigma$ means that $\psi:\D_e\to\C_\infty$ is univalent
with $\psi(\zeta)=\zeta+\Ordo(1)$ as $\zeta\to\infty$.
Holomorphic motion will allow us to embed such a $\psi$ with a 
quasiconformal extension $\C\to\C$ 
%any fixed dilate $\psi_R(\zeta):=
%R^{-1}\psi(R\zeta)$, $1<R<+\infty$, 
in a chain of conformal 
mappings indexed by a parameter $\lambda\in\D$. The procedure is somewhat 
analogous to the Loewner chain method, but the deformation is based on ideas 
from quasiconformal theory and Beltrami equations, and some aspects even 
rely on methods from Several Complex Variables.   

% but we have a specific
%situation in mind. We pick a conformal mapping $\varphi\in\classS$, and
%let $\psi\in\Sigma$ be given by \eqref{eq-psi1}. We will need to regularize 
%$\varphi$ a little. To this end, we pick an $r$ with $0<r<1$, and consider 
%the normalized dilate $\varphi_r(z):=r^{-1}\varphi(rz)\in\classS$, and 
%the associated $\psi_{r}\in\Sigma$ given by
%\begin{equation}
%\psi_{r}(\zeta):=\frac{1}{\varphi_r(1/\zeta)}=\frac{r}{\varphi(r/\zeta)},\qquad
%\zeta\in\D_e. 
%\label{eq-psi2}
%\end{equation}
%In view of Theorem 12.5.2 in \cite{AIM}, the restriction $\phi_r$ to $\D$ has 
%an extension $\hat\phi_r:\C\to\C$ which is $\frac{1+r}{1-r}$-quasiconformal,
%and hence
%\begin{equation}
%\hat\psi_{r}(\zeta):=\frac{1}{\hat\varphi_r(1/\zeta)},\qquad
%\zeta\in\C_\infty,
%\label{eq-psi3}
%\end{equation}
%is the associated $\frac{1+r}{1-r}$-quasiconformal extension of $\psi_r$ to 
%the extended plane $\C_\infty$. We put
%\begin{equation}
%\mu_{r}(\zeta):=r^{-1}\frac{\bar\partial_\zeta\hat\psi_r(\zeta)}
%{\partial_z\hat\psi_r(\zeta)},\qquad
%\zeta\in\C_\infty,
%\label{eq-psi4}
%\end{equation} 

We begin with a function $\mu\in L^\infty(\D)$ of norm at most $1$, that is,
more formally, we have that $\mu\in L^\infty(\C)$ with
\begin{equation}
|\mu(\zeta)|\le 1_{\D}(\zeta),\qquad\text{a.e.}\,\,\,
\zeta\in\C_\infty.
\label{eq-psi5}
\end{equation}    
Next, we 
%use Theorem 12.5.3 in \cite{AIM}, which tells us that $\hat\psi_r$
obtain the so-called  standard solution $\Psi:\D\times\C\to\C$ 
%$\Psi(r,\zeta)=\hat\psi_r(\zeta)$ and $\Psi(0,\zeta)=\zeta$. 
to the Beltrami equation
\[
\bar\partial_\zeta\Psi(\lambda,\zeta)=\lambda\mu(\zeta)\,
\partial_\zeta\Psi(\lambda,\zeta),\qquad (\lambda,\zeta)\in\D\times\C.
\]
We need to describe in greater detail how to obtain this standard solution 
$\Psi(\lambda,\zeta)$. To this end, we need the Cauchy 
%and
%conjugate Cauchy 
transform,
\begin{equation}
\cauchy\mu(\zeta):=\int_\C\frac{\mu(w)}{\zeta-w}\diff A(w),
%\qquad
%\bar\cauchy\mu(\zeta):=\int_\C\frac{\mu(w)}{\bar z-\bar w}\diff A(w),
\label{eq-cauchy1}
\end{equation}
as well as the Beurling 
%and conjugate Beurling 
transform
\begin{equation}
\Sop\mu(\zeta):=-\,\pv\int_\C\frac{\mu(w)}{(\zeta-w)^2}\diff A(w).
%\qquad
%\bar\Sop\mu(\zeta):=\int_\C\frac{\mu(w)}{(\bar z-\bar w)^2}\diff A(w).
\label{eq-Sop1}
\end{equation}
%If $\Xi_\lambda$ is given by the series expansion
%\begin{equation}
%\Xi_\lambda(\zeta):=-\int_\C\frac{\mu(w)}{(z-w)^2}\diff A(w),\qquad
%\bar\Sop\mu(\zeta):=\int_\C\frac{\mu(w)}{(\bar z-\bar w)^2}\diff A(w).
%\label{eq-Sop1}
%\end{equation}
If $\Mop_\mu$ stands for the multiplication operator $\Mop_\mu f(\zeta):=
\mu(\zeta)f(\zeta)$, then 
%$\Psi(\lambda,\zeta):=C_0(\lambda)+\Psi_0(\lambda,\zeta)$,
%where $C_0(\lambda)$ is constant in $\zeta$ and depends holomorphically on 
%$\lambda\in\D$, and 
\begin{equation}
\Psi(\lambda,\zeta)=\zeta+\lambda\cauchy\mu(\zeta)+
\lambda^2\cauchy\Mop_{\mu}\Sop\mu(\zeta)+
\lambda^3\cauchy\Mop_{\mu}\Sop\Mop_{\mu}\Sop\mu(\zeta)+\cdots. 
\end{equation}
%where 
%we write $\hat\mu_r:=r^{-1}\mu_r$, which is bounded in 
%modulus by $1_\D$ a.e. (cf. \eqref{eq-psi5}).
%The constant $C_0=C_0(\lambda)$ needs to be adjusted to fit the requirements
%that $\Psi(0,\zeta)=\zeta$ and $\Psi(r,\zeta)=\hat\psi_r(\zeta)$. 
%This is of course easy to do, but we would like to also keep the connection
%with the class $\classS$. More precisely, if we put, in analogy with 
%\eqref{eq-psi3},
%\begin{equation}
%\Phi(\lambda,z):=\frac{1}{\Psi(\lambda,1/z)},\qquad(\lambda,z)\in\D\times
%\C_\infty,
%\label{eq-Phi1}
%\end{equation}
%we would then like each $\Phi(\lambda,\cdot)$ to have a restriction 
%to the disk $\D$ which is the class $\classS$. This is the same as asking 
%that $0\notin\Psi(\lambda,\D_e)$, which is satisfied if 
%$0\in\Psi(\lambda,\D)$. The latter
%requirement may be written in the form $-C_0(\lambda)\in\Psi_0(\lambda,\D)$.
%From the assumptions, we know this is so for $\lambda=r$, since 
%$\hat\psi_r$ is given by \eqref{eq-psi3}. So, there exists a point 
%$\zeta_0\in\D$ such that
%$-C_0(r)=\Psi_0(r,\zeta_0)$, and if we put 
%\[
%C_0(\lambda):=-\Psi_0(\lambda,\lambda\zeta_0/r), \qquad \lambda\in\bar\D(0,r),
%\]
%we get a holomorphic choice which meets all the requirements at least for
%$\lambda\in\bar\D(0,r)$. 
As it turns out, for each fixed $\lambda\in\D$, $\Psi(\lambda,\cdot)$ 
is a quasiconformal mapping of $\C_\infty$, which preserves the point at 
infinity, and whose restriction to $\D_e$ is conformal and is in the class 
$\Sigma^{\langle|\lambda|\rangle}$. 
Since $\partial_\zeta\cauchy=\Sop$, the complex derivative of 
$\Psi(\lambda,\cdot)$ equals
\begin{equation}
\partial_\zeta\Psi(\lambda,\zeta)=1+\lambda\Sop\mu(\zeta)+
\lambda^2\Sop\Mop_{\mu}\Sop\mu(\zeta)+
\lambda^3\Sop\Mop_{\mu}\Sop\Mop_{\mu}\Sop\mu(\zeta)+\cdots,
\label{eq-holmot1}
\end{equation}
and if we take the logarithm, the result is
\begin{equation}
H(\lambda,\zeta):=
\log\partial_\zeta\Psi(\lambda,\zeta)=\lambda\hat H_1(\zeta)+
\lambda^2\hat H_2(\zeta)+\lambda^3\hat H_3(\zeta)+\cdots,
%\lambda\Sop\mu(\zeta)+
%\frac12\lambda^2\big(2\Sop\Mop_{\mu}\Sop\mu(\zeta)-
%[\Sop\mu(\zeta)]^2\big)+\cdots,
\label{eq-holmot1.1}
\end{equation}
where 
\begin{equation}
\hat H_1(\zeta)=\Sop\mu(\zeta),\quad 
\hat H_2(\zeta)=\Sop\Mop_{\mu}\Sop\mu(\zeta)-\frac12\,[\Sop\mu(\zeta)]^2,
\ldots,
\label{eq-holmotsuppl1}
\end{equation}
and the power series \eqref{eq-holmot1.1} converges for $\lambda\in\D$ 
(at least for $\zeta\in\D_e$).

%\end{document}

\subsection{Beurling transform formulation of the main theorem}
As the Beurling transform of $\mu$ is connected with
the Bergman projection of $\mu^*(z)=\mu(\bar z)$ via the relation
\begin{equation}
\Sop\mu\bigg(\frac{1}{z}\bigg)=-z^2\Pop\mu^*(z),\qquad z\in\D,
\label{eq-Sop:Pop}
\end{equation}
Theorem \ref{thm-main}(a) has a formulation involving the Beurling 
$\Sop$ in place of the Bergman projection $\Pop$:
\begin{equation}
\int_{\Te}\exp\Bigg\{a\frac{|\Sop\mu(R\zeta)|^2}
{\log\frac{R^2}{R^2-1}}\Bigg\}\diff s(\zeta)\le C(a),\qquad 1<R<+\infty,\,\,\,
0<a<1,
\label{eq-Marshall1.4}
\end{equation}
where $C(a)=10(1-a)^{-3/2}$. Moreover, by Theorem \ref{thm-main}(b),
no such bound is possible for $1<a<+\infty$.
%In particular, if we just include the first term in the series on the 
%left-hand side, using that $\hat H_1(\zeta)=\Sop\mu(\zeta)$ by 
%\eqref{eq-holmotsuppl1}, we must have under \eqref{eq-Marshall1} that
%\begin{equation}
%\limsup_{R\to 1^+}\int_{\Te}\exp\Bigg\{\alpha\frac{|\Sop\mu(R\zeta)|^2}
%{\log\frac{R^2}{R^2-1}}\Bigg\}\diff s(\zeta)<+\infty,
%\label{eq-Marshall1.4}
%\end{equation}
%for $0<\epsilon<1$. 
%\eqref{eq-Marshall1.4} is equivalent to having the bound
%\begin{equation}
%\mathrm{atvar}\,\Pop\mu\le\|\mu\|_{L^\infty(\D)},
%\label{eq-Marshall1.5}
%\end{equation}
%which is much stronger than Makarov's bound 
%$\mathrm{atvar}\,g\le\|g\|_{\mathcal{B}(\D)}$ 
%(cf. Theorem \ref{thm-Makarov1}). 

%\subsection{Statement of the main result}
%
%In support of Marshall's conjecture, we shall obtain \eqref{eq-Marshall1.5}
%and a stronger estimate for McMullen's asymptotic variance:
%
%\begin{thm}
%If 
%\[
%\mathcal{G}=\{\Pop\mu:\,\,\mu\in L^\infty(\D),
%\,\,\|\mu\|_{L^\infty(\D)}\le1\}, 
%\]
%then $\mathrm{avar}_u\,\mathcal{G}<1$ and $\mathrm{atvar}_u\,\mathcal{G}\le1$. 
%\label{thm-main1}
%\end{thm} 
%
%\begin{rem}
%In Example \ref{} below, we supply an explicit function 
%$\mu=\mu_0\in L^\infty(\D)$ with equality in \eqref{eq-Marshall1.5}, which 
%shows that the bound $\mathrm{atvar}_u\,\mathcal{G}\le1$ of 
%Theorem \ref{thm-main1} is optimal.
%In particular, for that $\mu_0$, the observe that that the asymptotic variance
%and the asymptotic tail variance differ: 
%$\mathrm{avar}\,\Pop\mu_0<\mathrm{atvar}\,\Pop\mu_0$.
%\end{rem}

%\end{document}
\subsection{Estimate from above of the universal integral means 
spectrum of conformal mappings with quasiconformal extension}
\label{subsec-proofthmmain2}

We denote by $\Sigma^{\langle k\rangle}$ the collection of all $\psi\in\Sigma$
that have a $k$-quasiconformal extension 
$\tilde\psi:\C_\infty\to\C_\infty$. Via holomorphic motion, any 
$\psi\in\Sigma^{\langle k\rangle}$ is such that for a suitable constant $C_0$,
the function $\psi+C_0$ may be fitted into a standard Beltrami solution family 
$\Psi(\lambda,\cdot)$ at the parameter value $\lambda=k$. The correct value
of the constant $C_0$ is $C_0:=\lim_{\zeta\to\infty}\zeta-\psi(\zeta)$.
%Moreover, for $\lambda\in\D$ close to $0$, the first term in the expansion
%\eqref{eq-holmot1} is dominant, and Theorem \ref{thm-main1} is seen to lead
%to a strong estimate of the integral means spectrum for the class
%$\Sigma^{\langle k\rangle}$, which we denote by $\mathrm{B}(k,t)$:
%\[
%\mathrm{B}(k,t):=\mathrm{B}_{\Sigma^{\langle k\rangle}}(t)=
%\sup_{\psi\in\Sigma^{\langle k\rangle}}\mathrm{B}_{\psi}(t).
%\]
This means that we may focus our attention to standard Beltrami solution 
families $\Psi(\lambda\zeta)$, and think of $k$ as $|\lambda|$. 
By the global estimate \eqref{eq-simple1.4}, which comes from Goluzin's
inequality, we know that the function $H(\lambda,\zeta)$ defined by 
\eqref{eq-holmot1.1} meets
\begin{equation}
|\zeta\partial_\zeta H(\lambda,\zeta)|\le\frac{6}{|\zeta|^2-1},
\qquad\lambda\in\D,\,\,\,\zeta\in\D_e.
\label{eq-Goluzinest2.1}
\end{equation}
In terms of the function
\begin{equation}
G(\lambda,z):=\frac{H(\lambda,1/z)}{\lambda},\qquad 
\lambda,z\in\D\setminus\{0\},
\label{eq-Goluzinest2.2}
\end{equation}
where the singularities at $\lambda=0$ and $z=0$ are both removable, 
the estimate analogous to 
\eqref{eq-Goluzinest2.1} reads (compare with \eqref{eq-ptwiseS1})
\begin{equation}
(1-|z|^2)|\partial_z G(\lambda,z)|\le\frac{6|z|}{|\lambda|},
\qquad\lambda,z\in\D.
\label{eq-Goluzinest2.3}
\end{equation}
The left-hand side of \eqref{eq-Goluzinest2.3} is subharmonic in 
$\lambda\in\D$, so by the maximum principle, we may improve this estimate 
a little:
\begin{equation}
(1-|z|^2)|\partial_z G(\lambda,z)|\le6|z|,
\qquad\lambda,z\in\D.
\label{eq-Goluzinest2.3.1}
\end{equation}
The expansion \eqref{eq-holmot1.1} has an analogue for $G(\lambda,z)$:
\begin{equation}
G(\lambda,z)=\hat G_0(z)+\lambda\hat G_1(z)+\lambda^2\hat G_2(z)+\cdots,
%\lambda\Sop\mu(\zeta)+
%\frac12\lambda^2\big(2\Sop\Mop_{\mu}\Sop\mu(\zeta)-
%[\Sop\mu(\zeta)]^2\big)+\cdots,
\label{eq-Goluzinest2.4}
\end{equation}
where $\hat G_j(z):=\hat H_{j+1}(1/z)$, so that
\begin{equation}
\hat G_0(z)=\Sop\mu(1/z)=-z^2\Pop\mu^*(z),\quad 
\hat G_1(z)=\Sop\Mop_{\mu}\Sop\mu(1/z)-\frac12\,[\Sop\mu(1/z)]^2,
\ldots.
\label{eq-Goluzinest2.5}
\end{equation}
The function $G(\lambda,z)$ enjoys the global growth estimate
\begin{equation}
|G(\lambda,z)|\le \log\frac{1}{1-|z|^2},\qquad \lambda,z\in\D,
\label{eq-Goluzinest2.6}
\end{equation} 
which may be derived from \eqref{eq-simple1.5} by an application of the 
maximum principle (or the Schwarz lemma). Next, let 
$\nu_\lambda\in L^\infty(\D)$ be the function 
\begin{equation*}
\nu_\lambda(z):=(1-|z|^2)\frac{\partial_z G(\lambda,z)}{z},\qquad 
\lambda,z\in\D,
\end{equation*}
which by \eqref{eq-Goluzinest2.3} and Proposition \ref{prop-muphi1.1} has 
$\|\nu_\lambda\|_{L^\infty(\D)}\le6$ and $z^2\Pop\nu_\lambda(z)=G(\lambda,z)$. 
We consider for a moment the function
\begin{equation}
F(\lambda,z):=\frac{G(\lambda,z)-\hat G_0(z)}{\lambda}=
\frac{z^2\Pop\nu_\lambda(z)+z^2\Pop\mu^*(z)}{\lambda}
=\frac{z^2\Pop(\nu_\lambda+\mu^*)(z)}{\lambda},
%\qquad\lambda,z\in\D,
\label{eq-Flambdaz}
\end{equation}
for $\lambda,z\in\D$, which is holomorphic across $\lambda=0$ in view 
of the expansion \eqref{eq-Goluzinest2.4} and the formulae 
\eqref{eq-Goluzinest2.5}.
Since $\|\mu\|_{L^\infty(\D)}\le1$ and $\|\nu_\lambda\|_{L^\infty(\D)}\le6$, we
clearly have that
\begin{equation}
\bigg\|\frac{\nu_\lambda+\mu^*}{\lambda}\bigg\|_{L^\infty(\D)}
\le\frac{7}{|\lambda|},\qquad \lambda\in\D.
\label{eq-Nest.1}
\end{equation}
Let us choose a small number $0<\epsilon<1$, and let $N_{\lambda,\epsilon}$ 
denote the function which equals
\[
N_{\lambda,\epsilon}(z):=\frac{\nu_\lambda+\mu^*}{\lambda},\qquad z\in\D,
\,\,\,1-\epsilon\le|\lambda|<1,
\] 
whereas for $|\lambda|<1-\epsilon$, the function $N_{\lambda,\epsilon}(z)$ is given 
as the Poisson extension to the interior of the boundary values on 
the circle $|\lambda|=1-\epsilon$.
By the maximum principle applied to \eqref{eq-Nest.1}, we find that
\begin{equation}
\|N_{\lambda,\epsilon}\|_{L^\infty(+D)}\le \frac{7}{1-\epsilon},\qquad \lambda\in\D.
\label{eq-Nest.3}
\end{equation} 
As taking the Poisson extension preserves the holomorphic functions, it is a
consequence of \eqref{eq-Flambdaz} that
$F(\lambda,z)=z^2\Pop N_{\lambda,\epsilon}(z)$, and it follows from 
\eqref{eq-Flambdaz} that
\[
G(\lambda,z)=\hat G_0(z)+\lambda F(\lambda,z)=
-z^2\Pop\mu^*(z)+\lambda z^2\Pop N_{\lambda,\epsilon}(z)=z^2\Pop(-\mu^*+\lambda
N_{\lambda,\epsilon})(z).
\]
From the estimate \eqref{eq-Nest.3}, we see that
\[
\|-\mu^*+\lambda N_{\lambda,\epsilon}\|_{L^{\infty}(\D)}\le 
1+\frac{7|\lambda|}{1-\epsilon}, 
\]
and by Theorem \ref{thm-main}, we get that
\[
\mathrm{atvar}\,G(\lambda,\cdot)\le 
\|-\mu^*+\lambda N_{\lambda,\epsilon}\|_{L^{\infty}(\D)}^2\le
\bigg(1+\frac{7|\lambda|}{1-\epsilon}\bigg)^2.
\]
The left hand side does not depend on the choice of $\epsilon$, and we are
free to let $\epsilon\to0^+$:
\[
\mathrm{atvar}\,G(\lambda,\cdot)\le 
(1+7|\lambda|)^2.
\]
Multiplying $G(\lambda,z)$ by the parameter $\lambda$ results in
multiplying the asymptotic tail variance by $|\lambda|^2$: 
\[
\mathrm{atvar}\,\lambda G(\lambda,z)
=|\lambda|^2\mathrm{atvar}\,G(\lambda,\cdot).
\]
Finally, by Corollary \ref{cor-Marshall1.22}, we may estimate the exponential
type spectrum of the function $\exp(\lambda G(\lambda,\cdot))$:
\begin{equation}
\beta_{\lambda G(\lambda,\cdot)}(t)\le \frac{1}{4}|\lambda t|^2(1+7|\lambda|)^2.
\label{eq-etsplambda}
\end{equation}

%\end{document}

\begin{proof}[Proof of Theorem \ref{thm-main2}]
Since 
\[
\lambda G(\lambda,1/\zeta)=H(\lambda,\zeta)
=\log\partial_\zeta\Psi(\lambda,\zeta),
\]
and the exponential type spectrum estimate \eqref{eq-etsplambda} 
is independent of the choice of dilatation coefficient $\mu$
in the unit ball of $L^\infty(\D)$, this completes the proof of Theorem 
\ref{thm-main2}, since by holomorphic motion any 
$\psi\in\Sigma^{\langle k\rangle}$ can be fitted 
into a standard Beltrami solution family $\Psi(\lambda,\cdot)$ for the
parameter value $\lambda=k$ (see, eg., the book \cite{AIM}). 
\end{proof}
 
\begin{rem}
The source of loss of information in the proof of Theorem \ref{thm-main2}
is the fact that in the expansion \eqref{eq-holmot1.1}, we can only effectively 
analyze the first term, $\lambda \hat H_1(\zeta)$. Much deeper understanding 
should result from an analysis of the rest of the terms, individually and
put together. It is, however, known that each coefficient $\hat H_j(\zeta)$ 
is in planar BMO and hence its restriction to $\D_e$ is in the Bloch space
(see Hamilton's paper \cite{Ham}, which builds on work of Reimann).    
\end{rem}

%\begin{rem}
%This estimate is asymptotically as $k\to0$ congruent with the prediction of
%Prause and Smirnov \cite{PS}, which would have that $\mathrm{B}(k,t)\le
%\frac{1}{4}k^2|t|^2$, at least for real $t$, with equality for $|t|\le2/k$.
%For equality to have a chance to hold, one would need not to lose much in 
%the process which gives the estimate. This gives some indication of what 
%properties the approximal optimizing function $\varphi\in\classS$ ought to
%possess. In particular, in Marshall's estimate
%\eqref{eq-Marshall2}, the only way to have minimal loss for as $\alpha\to1^-$
%%for a given function $\varphi\in\classS$ 
%is for 
%\[
%g_\varphi(r\zeta)\approx\frac{\bar t}{2\alpha}\log\frac{1}{1-r^2}
%\] 
%to hold where the mass of the density  
%\[
%\frac{1}{Z(r,\alpha)}\exp\Bigg\{\alpha\frac{|g_\varphi(r\zeta)|^2}
%{\log\frac{1}{1-r^2}}\Bigg\}
%\]
%is substantial on $\Te$. Here, $Z(r,\alpha)$ is a positive normalizing 
%constant so that we get a probability density.  
%
%\end{rem}
%This estimate then leads to an improvement in Smirnov's dimension estimate 
%$D(k)\le 1+k^2$; the improvement refutes an earlier conjecture of Astala that 
%$D(k)=1+k^2$ (see \cite{Sm}).

%\end{document}

\section{The dimension estimate for quasicircles}

\subsection{Pommerenke's Minkowski dimension bound}
\label{subsec-dimbound}

If we combine the estimate $\mathrm{B}(k,t)\le \frac14k^2|t|^2(1+7k)^2$ from
Theorem \ref{thm-main2} with Pommerenke's Minkowski dimension estimate 
\cite{Pom1}, which refines an estimate of Makarov \cite{Mak1.5}, we obtain 
the following.
Let $F(k,t)$ be the quadratic function 
\[
F(k,t):=\frac14 k^2t^2(1+7k)^2-t+1,
\]
where $0<k<1$ and $1<t<2$ are considered. Then, \emph{if $t=t_k$ is a root to
$F(k,t)=0$ with $1<t_k<2$, and if $\partial_t F(k,t)|_{t=t_k}<0$, we have that 
$D_{M,1s}^+(k)\le t_k$}.
\smallskip

\begin{proof}[Proof of Corollary \ref{cor-dim}]
For small $k$, more precisely, $0<k<(\sqrt{15}-1)/14=0.205\ldots$, 
the equation $F(k,t)=0$ has exactly one root in the interval $1<t<2$, and
denote by it by $t=t_k$; explicitly, it is given by
\[
t_k=\frac{2}{1+\sqrt{1-k^2(1+7k)^2}}=1+\frac{k^2}{4}
+\Ordo(k^3),
\]
where the asymptotics is as $k\to0^+$.
Moreover, it is easy to verify that 
$\partial_t F(k,t)|_{t=t_k}<0$, which means that $F(k,t)$ assumes negative 
values to the right of $t=t_k$.
By Pommerenke's estimate (see above), then, it follows that 
\[
D_{M,1s}^+(k)\le t_k=1+\frac{k^2}{4}+\Ordo(k^3),
\]
and, in view of \eqref{eq-dimequal0} and \eqref{eq-dimequal1}, we obtain
\[
D_{M}^+(k')=D_{M,1s}^+\bigg(\frac{2k'}{1+(k')^2}\bigg)\le 1+(k')^2+\Ordo((k')^3),
\]
as claimed.
\end{proof}

\section{On a conjecture of Marshall}

\subsection{Marshall's conjecture}
The following conjecture is from Donald Marshall's notes \cite{Mar1}. Recall
the notation $\mathrm{atvar}\,g$ and $\mathrm{atvar}_u\,\mathcal{G}$ for the 
asymptotic and uniform asymptotic tail variances of a Bloch function $g$ and
a collection of Bloch functions $\mathcal{G}$, respectively (see
\eqref{eq-astvar1} and \eqref{eq-astvar1.1}).

\begin{conj}
{\rm (Marshall)} For every $\varphi\in\classS$, we have that 
$\mathrm{atvar}\,g_\varphi\le1$, where $g_\varphi$ is given by \eqref{eq-gphi}.
Indeed, we should have that $\mathrm{atvar}_u\,g_{\classS}\le1$, where
$g_{\classS}:=\{g_\varphi:\,\,\varphi\in\classS\}$. 
\label{conj-Marshall}
\end{conj}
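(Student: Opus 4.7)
Since $g_\varphi = z^2 \Pop \nu_\varphi$ with $\|\nu_\varphi\|_{L^\infty(\D)} \le 6$ by Proposition \ref{prop-muphi1.1}, a direct application of Theorem \ref{thm-main} yields only the crude bound $\mathrm{atvar}\,g_\varphi \le 36$. Because Goluzin's inequality is pointwise sharp, the constant $6$ cannot be improved at the level of $\|\nu_\varphi\|_{L^\infty(\D)}$, so any proof of the conjecture must exploit the univalence of $\varphi$ in a way that does not reduce to a pointwise dilatation bound on $\nu_\varphi$.

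My first attempt would be via holomorphic motion, in the spirit of the proof of Theorem \ref{thm-main2}. For a $\psi \in \Sigma$ with a quasiconformal extension, the Beltrami family $\Psi(\lambda,\cdot)$ produces a holomorphic chain $G(\lambda,z)$ with $\mathrm{atvar}\,G(\lambda,\cdot) \le (1+7|\lambda|)^2$, which degenerates to $64$ at $|\lambda| = 1$ (the case relevant for a general $\varphi \in \classS$), but which at $\lambda = 0$ reduces to the linearization $\hat G_0(z) = -z^2 \Pop\mu^*(z)$, to which Theorem \ref{thm-main} applies directly and gives $\mathrm{atvar}\,\hat G_0 \le 1$. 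The plan would be to establish a subharmonicity or maximum-type principle for $\lambda \mapsto \mathrm{atvar}\,G(\lambda,\cdot)$, or more tractably, for the tail integrals $I_{G(\lambda,\cdot)}(a,r)$ jointly in $(\lambda,r)$, so that the bound $1$ available in the infinitesimal regime $\lambda \to 0$ propagates all the way to $|\lambda| = 1$. Approximating a general $\varphi \in \classS$ by quasiconformally extendable elements would then complete the argument.

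A complementary strategy would use a Löwner chain $\varphi_t$ with $\varphi_\infty = \varphi$, derive a differential inequality in $(t,r)$ for $I_{g_{\varphi_t}}(a,r)$, and extract a monotonicity property transferring the trivial $t = 0$ bound to the conjectured bound at $t = \infty$. This would be a refined version of the Gaussian modelling of Makarov \cite{Mak1,Mak2} and McMullen \cite{McM}, executed at the level of the tail rather than merely the second moment, which is precisely what $\mathrm{atvar}$ measures.

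The main obstacle is that via Corollary \ref{cor-Marshall1.22}, the uniform part of the conjecture implies $\mathrm{B}(1,t) \le \tfrac14|t|^2$ for every $t \in \C$, a form of Kraetzer's conjecture and one of the central open problems in geometric function theory. No method is presently known that yields the sharp constant $1/4$ for the class $\classS$, in contrast to the $k$-quasiconformal setting of Theorem \ref{thm-main2}, where the small parameter $k$ absorbs the large Goluzin constant $6$. Any successful proof would therefore have to introduce a genuinely new mechanism for quantitatively exploiting univalence at the level of the Gaussian tail, beyond what pointwise or Bloch-norm estimates have so far provided.
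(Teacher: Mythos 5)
This statement is a conjecture (due to Marshall), not a theorem: the paper supplies no proof of it, and you correctly refrain from claiming one. What you have written is an assessment of why the problem is open together with two candidate strategies, and that assessment is consistent with the paper's own discussion in Section 9. In particular, the paper's computation \eqref{eq-Marshall..1.3} makes precise the point you raise about the holomorphic-motion approach: averaging over $\lambda$ shows that the uniform form of Conjecture \ref{conj-Marshall} would control the \emph{entire} sum $\sum_{j\ge1}|\hat H_j|^2$ of coefficients in the expansion \eqref{eq-holmot1.1}, whereas Theorem \ref{thm-main} only reaches the first term $\hat H_1=\Sop\mu$; and the closing remark of the paper records exactly your observation that, via Corollary \ref{cor-Marshall1.22}, the conjecture implies $\mathrm{B}(1,t)\le\frac14|t|^2$ and hence the conjectures of Binder, Kraetzer, Brennan, and Carleson--Jones. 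Your diagnosis of the crude route --- Proposition \ref{prop-muphi1.1} plus Theorem \ref{thm-main} giving only $\mathrm{atvar}\,g_\varphi\le36$, and the bound $(1+7|\lambda|)^2$ of Theorem \ref{thm-main2} degenerating to $64$ as $|\lambda|\to1$ --- is also accurate.

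One small caution: you assert that the constant $6$ "cannot be improved" because Goluzin's inequality is pointwise sharp. The sharpness claimed in the paper is for \eqref{eq-Goluzin0} itself; the constant $6$ in \eqref{eq-simple1.4} arises from the additional crude bounds \eqref{eq-simple1.2} and \eqref{eq-simple1.3}, so optimality of $6$ for $\sup_z(1-|z|^2)|g_\varphi'(z)|/|z|$ over $\classS$ does not follow and should not be asserted. This does not affect your main point, which is that \emph{no} pointwise dilatation bound on $\nu_\varphi$ with constant exceeding $1$ can yield the conjectured tail variance $1$ through Theorem \ref{thm-main} alone. In summary: the statement remains open, your proposal is not (and does not purport to be) a proof, and the obstacles you identify are the same ones the paper itself acknowledges.
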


More explicitly, the first (weaker) part of Marshall's conjecture amounts to 
having
\begin{equation}
\limsup_{r\to1^-}\int_\Te\exp\Bigg\{a\frac{|g_\varphi(r\zeta)|^2}
{\log\frac{1}{1-r^2}}\Bigg\}\diff s(\zeta)<+\infty 
\label{eq-Marshall.1}
\end{equation}
for every fixed $a$, $0<a<1$, and every conformal mapping 
$\varphi\in\classS$. This goes beyond even Theorem \ref{thm-main}, as we 
shall see.

A related approximate exponential quadratic integrability result was 
obtained by Baranov and Hedenmalm (for details, see 
\cite{BarHed}, pp. 20-23). Following ideas developed by Jones and Makarov
\cite{JoMa}, it was derived from the well-known exponential integrability 
of the Marcinkiewicz-Zygmund integral.

\subsection{Marshall's conjecture and standard Beltrami solution 
families}
The strong form of Marshall's Conjecture \ref{conj-Marshall} says that
for every $0<a<1$, we have that
\begin{equation}
\limsup_{R\to1^+}\sup_{\psi\in\Sigma}
\int_\Te\exp\Bigg\{a\frac{|h_\psi(R\zeta)|^2}
{\log\frac{R^2}{R^2-1}}\Bigg\}\diff s(\zeta)<+\infty, 
\label{eq-Marshall..1}
\end{equation}
where $h_\psi(\zeta)=\log\psi'(\zeta)$, as before. We will analyze some 
implications of this conjecture in the setting of a standard Beltrami solution
family $\Psi(\lambda,\zeta)$, as in Subsection \ref{subsec-proofthmmain2}, and
we retain most of the notation from there.
%We want to study the consequences of Marshall's conjecture 
%\eqref{eq-Marshall1} in the setting of the holomorphic family 
%$\Psi(\lambda,\cdot)$ in $\Sigma$, indexed by $\lambda\in\D$. We pick a
%radius $\varrho$ with $1<\varrho<+\infty$, and consider the concentric 
%circle $\lambda=\varrho\omega$, where $\omega\in\Te$, and realize that
In this context, \eqref{eq-Marshall..1} entails that
\begin{equation}
\limsup_{R\to1^+}\sup_{\lambda\in\D}
\int_\Te\exp\Bigg\{a\frac{|H(\lambda,R\zeta)|^2}
{\log\frac{R^2}{R^2-1}}\Bigg\}\diff s(\zeta)<+\infty, 
\label{eq-Marshall..1.2}
\end{equation} 
for every $0<a<1$. We use polar coordinates and write $\lambda=\rho\omega$,
where $0\le\rho<1$ and $\omega\in\Te$.
From the Plancherel identity and the geometric-arithmetic mean inequality 
(or Jensen's inequality), we see that
\begin{multline}
\int_{\Te}\exp\Bigg\{a\sum_{j=1}^{+\infty}
\frac{|\hat H_j(R\zeta)|^2}
{\log\frac{R^2}{R^2-1}}\Bigg\}\diff s(\zeta)
\\
=\lim_{\rho\to1^+}
\int_{\Te}\exp\Bigg\{a\int_{\Te}\frac{|H(\rho\omega,R\zeta)|^2}
{\log\frac{R^2}{R^2-1}}\diff s(\omega)\Bigg\}\diff s(\zeta)
\\
\le\lim_{\rho\to1^+}
\int_{\Te}\int_\Te\exp\Bigg\{a\frac{|H(\rho\omega,R\zeta)|^2}
{\log\frac{R^2}{R^2-1}}\Bigg\}\diff s(\zeta)\diff s(\omega)
\\
\le\sup_{\lambda\in\D}
\int_\Te\exp\Bigg\{a\frac{|H(\lambda,R\zeta)|^2}
{\log\frac{R^2}{R^2-1}}\Bigg\}\diff s(\zeta). 
\label{eq-Marshall..1.3}
\end{multline}
It is now clear that Conjecture \ref{conj-Marshall} amounts to a far-reaching
extension of Theorem \ref{thm-main}(a), since that theorem only involves the
first term in expansion on the left-hand side of \eqref{eq-Marshall..1.3}.

\begin{rem}
In \cite{Mar1}, Marshall explains how under some additional uniformity 
of the constants involved, the implication arrow in Corollary 
\ref{cor-Marshall1.22} may be reversed. See also the paper by Hedenmalm
and Kayumov \cite{HedKay}, p. 2240. Marshall's conjecture
implies the well-known conjectures of Binder, Kraetzer, Brennan, Carleson 
and Jones, and in a sense it may be thought of as equivalent to a strong
form of the most extensive conjecture (that of Binder). 
\end{rem}
%
%In particular, if we just include the first term in the series on the 
%left-hand side, using that $\hat H_1(\zeta)=\Sop\mu(\zeta)$ by 
%\eqref{eq-holmotsuppl1}, we must have under \eqref{eq-Marshall1} that
%\begin{equation}
%\limsup_{R\to 1^+}\int_{\Te}\exp\Bigg\{\alpha\frac{|\Sop\mu(R\zeta)|^2}
%{\log\frac{R^2}{R^2-1}}\Bigg\}\diff s(\zeta)<+\infty,
%\label{eq-Marshall1.4}
%\end{equation}
%for $0<\epsilon<1$. As the Beurling transform of $\mu$ is connected with
%the Bergman projection of $\mu^*(z)=\mu(\bar z)$ via the relation
%\begin{equation*}
%\Sop\mu\bigg(\frac{1}{z}\bigg)=-z^2\Pop\mu^*(z),\qquad z\in\D,
%\end{equation*}
%\eqref{eq-Marshall1.4} is equivalent to having the bound
%\begin{equation}
%\mathrm{atvar}\,\Pop\mu\le\|\mu\|_{L^\infty(\D)},
%\label{eq-Marshall1.5}
%\end{equation}
%which is much stronger than Makarov's bound 
%$\mathrm{atvar}\,g\le\|g\|_{\mathcal{B}(\D)}$ 
%(cf. Theorem \ref{thm-Makarov1}). 

\end{document}